\newtheorem{theorem}{Theorem}[section]
\newtheorem{proposition}[theorem]{Proposition}
\newtheorem{lemma}[theorem]{Lemma}
\newtheorem{definition}[theorem]{Definition}
\newtheorem{corollary}[theorem]{Corollary}
\theoremstyle{definition}
\newcommand{\N}{\mathbb N}
\newcommand{\R}{\mathbb R}
\newcommand{\eps}{\varepsilon}
\newcommand{\ep}{\eps}
\newcommand{\dd}{\, \mathrm{d}}
\newcommand{\1}{\mathds{1}}
\newcommand{\pmeps}{{\rm pm}}
\newcommand{\hypnu}{{\rm hyp}}
\newcommand{\cK}{\mathcal{K}}
\newcommand{\cZ}{\mathcal{Z}}
\newcommand{\dbar}[1]{\bar{\bar{#1}}}
\newcommand{\be}{\begin{equation}}
\newcommand{\ee}{\end{equation}}
\newcommand{\dx}{\dd x}
\newcommand{\sssection}[1]{\subsubsection{#1}}%\ \smallskip}
\newcommand{\ssection}[1]{\subsection{#1}}%\ \smallskip}
\newsavebox{\accentbox}
\newcommand{\compositeaccents}[2]{\sbox\accentbox{$#2$}#1{\usebox\accentbox}}
\DeclareMathOperator{\supp}{supp}
\DeclareMathOperator{\sign}{sign}
\DeclareMathOperator{\Int}{Int}
\DeclareMathOperator{\argmin}{argmin}
\DeclareMathOperator{\argmax}{argmax}
\numberwithin{equation}{section}
\begin{document}

\title[Negative chemotaxis traveling waves]{Speed-up of traveling waves by negative chemotaxis}

\author{Quentin Griette}
\address[Quentin Griette]{Laboratoire de Mathématiques Appliquées du Havre, Université Le Havre Normandie}
\email{quentin.griette@univ-lehavre.fr}

\author{Christopher Henderson}
\address[Christopher Henderson]{Department of Mathematics, University of Arizona}
\email{ckhenderson@math.arizona.edu}

\author{Olga Turanova}
\address[Olga Turanova]{Department of Mathematics, Michigan State University}
\email{turanova@msu.edu}

\begin{abstract}
We consider the traveling wave speed for Fisher-KPP (FKPP) fronts under the influence of repulsive chemotaxis and provide an almost complete picture of its asymptotic dependence on parameters representing the strength and length-scale of chemotaxis.  Our study is based on establishing the convergence to the porous medium FKPP traveling wave and a hyperbolic FKPP-Keller-Segel traveling wave in certain asymptotic regimes.  In this way, it clarifies the relationship between three equations that have each garnered intense interest on their own.  Our proofs involve a variety of techniques ranging from entropy methods and decay of oscillations estimates to a general description of the qualitative behavior to the hyperbolic FKPP-Keller-Segel equation.  For this latter equation, we, as a part of our limiting arguments, establish a new explicit lower bound on the minimal traveling wave speed and provide a novel construction of traveling waves that extends the known existence range to all parameter values.
\end{abstract}

\maketitle

\section{Introduction}

In this paper, we are concerned with the problem of front propagation for the Fisher-KPP (FKPP) equation influenced by a Keller-Segel chemotaxis term:
\be\label{e.nontw_FKPPKS}
	\left\{\begin{split}
		U_t + \chi (V_x U)_x &= U_{xx} + U(1-U)
			\\
		- d V_{xx} &= U - V
	\end{split}\right.
	\qquad\text{ in } (0,\infty)\times \R,
\ee
with the condition $V(t,\cdot) \in L^\infty$ (to guarantee uniqueness for the second equation in~\eqref{e.nontw_FKPPKS}).  
Here $\chi \in \R$ and $d >0$ are the strength of the chemotaxis and its length-scale, respectively.  In this paper, we are interested in `negative chemotaxis,' which corresponds to $\chi < 0$.  This is the phenomenon in which individuals secrete a chemical (chemorepellent) that repels nearby individuals when sensed by them.  To rephrase this slightly, intraspecific interactions manifest as a `drift' that `spreads out' the population. 
The model~\eqref{e.nontw_FKPPKS} and others like it have been studied extensively in the past few decades, see~\cite{KellerSegel} for the original derivation of the Keller-Segel equation and~\cite{Perthame_transport, Murray} for overviews of its significance in mathematical biology.

In reaction-diffusion systems such as~\eqref{e.nontw_FKPPKS}, one can understand front propagation through the study of traveling wave solutions, which are special solutions of the form $V(t,x) = V(x-\bar ct)$ and $U(t,x) = U(x-\bar ct)$ for some $\bar c\in \R$, after a slight abuse of notation.  We are motivated by `invasion fronts,' so that we consider $U(-\infty) = 1$ and $U(+\infty)= 0$.  In this case,~\eqref{e.nontw_FKPPKS} becomes
\be\label{e.unscaled_tw}
	\left\{\begin{split}
		- \bar c U_x + \chi \left( V_x U\right)_x &= U_{xx} + U(1-U)
		\\ - d V_{xx} &= U - V
	\end{split}
	\right.
	\qquad\text{ in } \R,
\ee
with the additional condition that $V\in L^\infty$. 
The existence of these solutions as well as the positivity of the speed, $\bar c > 0$, has been proved using routine methods~\cite{Henderson2021}.

Our goal is to understand how the behavior of $\bar c = \bar c_{\chi,d}$ depends on $\chi$ and $d$.  We seek a precise description of how the nonlocal drift $V_x$ `speeds up' the traveling wave.  
(It is known that chemotaxis never `slows down' the traveling wave; that is, $c_{\chi,d} \geq 2$ for all $\chi, d$.  See \cite[Proposition~1.3]{Henderson2021}. But as we discuss later, it may happen that $c_{\chi, d}=2$ even when $-\chi>0$). 
As we discuss in the sequel, this connects to three ongoing threads of research: enhancement of propagation by advection, the role of nonlinearity in front propagation (i.e., `pushed' and `pulled' fronts), and the effect of nonlocality in front propagation.

When $\chi/d$ and $d$ are sufficiently small, it is known that the minimal speed $\bar c_{\chi,d} = 2$, while the minimal speed satisfies $\bar c_{\chi,d} \approx \sfrac{|\chi|}{2\sqrt d}$ when $1 \ll -\chi \ll d$~\cite{Henderson2021}.  Here we complete the picture, showing, roughly that there are positive constants $c^*_{{\rm pm}, \eps}$ and $c^*_{\hypnu, \bar\nu}$, such that
\be\label{e.c81001}
	\liminf_{\substack{-\chi \to 1/\eps,\\ d \ll -\chi}} \frac{\bar c_{\chi,d}}{\sqrt{-\chi}}
		\geq c^*_{{\rm pm}, \eps},
\ee
where $\eps \geq 0$, and 
\be\label{e.c81002}
	\liminf_{\substack{-\chi \to \infty,\\ -\chi/ d \to \bar \nu}} \frac{\bar c_{\chi,d}}{\sqrt{-\chi}}
		\geq c^*_{\hypnu, \bar\nu},
\ee
where $\bar \nu > 0$.   

The constants $c_{\rm pm, \ep}^*$ and $c^*_{\hypnu, \bar\nu}$ are the minimal wave speeds for, respectively, the porous medium FKPP model~\eqref{e.pm} and the hyperbolic FKPP-Keller-Segel model~\eqref{e.hyp} (see below).  The former is explicit (see~\eqref{e.c^*_pm}), taking the value $c_{\rm pm, 0}^* = \sfrac{1}{\sqrt 2}$ in the case $-\chi \to \infty$, that is, $\eps = 0$.  The latter is not explicit; however, we provide a positive lower bound for it.

We also obtain partially matching upper bounds.  For the limit~\eqref{e.c81001}, we construct a particular sequence of traveling waves for which the lower bound is attained in the limit (in the case $\eps = 0$).  For the limit~\eqref{e.c81002}, we construct a sequence of traveling waves that, after scaling, converge to a discontinuous traveling wave, which is presumed to be the minimal speed wave (see the discussion in~\cite{GrietteMagalZhao}).

Our arguments are based on the convergence of suitably rescaled solutions of~\eqref{e.unscaled_tw} to the hyperbolic FKPP and porous medium FKPP equations mentioned above.  This clarifies the relationship between the three equations and involves the development of novel estimates in settings where regularity does not arise from ellipticity.

The first step in our analysis is to perform a scaling that allows for the possibility of a nontrivial asymptotic limit. We define,
\be
	u(x)
		= U(x \sqrt{|\chi|}),
	\quad
	v(x)
		= V(x\sqrt{|\chi|})
	\quad
	c
		= \frac{\bar c}{\sqrt{|\chi|}}, \text{ and}\quad \nu = \frac{d}{-\chi}.
\ee
Then,~\eqref{e.unscaled_tw} yields: $u(-\infty) = 1$, $u(+\infty) = 0$, $v\in L^\infty$, and
\be\label{e.tw}
\tag{TW}
	\left\{
	\begin{split}
		- c u_x
			- (v_x u)_x
			&= \frac{1}{|\chi|} u_{xx} + u(1-u)\\
		- \nu v_{xx}
			&= u - v
	\end{split}
	\right.
	\qquad\text{ in } \R.
\ee
We use $c^*_{\chi,\nu}$  to denote the minimal speed of traveling wave solutions to~\eqref{e.tw} (see~\eqref{e.minimal_speed}). 
We now recast our goal with the new objects in hand.  The two limits~\eqref{e.c81001} and~\eqref{e.c81002} correspond, now, to understanding the scaling of the minimal speed $c^*_{\chi,\nu}$ when, respectively, $(-\frac{1}{\chi},\nu) \to (\eps,0)$ for $\ep\geq 0$ and $(-\frac{1}{\chi},\nu) \to (0,\bar\nu)$ for $\bar \nu>0$.  We refer to the former limit as the porous medium regime and the latter as the hyperbolic regime.  Notice that in each case, there is a loss of ellipticity in~\eqref{e.tw}, and the consequential degeneration of regularity estimates is a major source of difficulty in our analysis.  We now present some  heuristics that clarify this terminology and motivate our main results.

\subsection*{Heuristics and summary of main results} 
The first asymptotic limit we consider is $\nu\rightarrow 0$ and $-\chi \to \sfrac1\eps$, for $\ep\geq 0$. Note that the second equation in~\eqref{e.tw} suggests that $u$ and $v$ should approach the same limit in this regime. Using this ansatz along with the formal limits $c \to c_{\pmeps}$ and $u,v\to u_{\pmeps}$ suggests 
\be\label{e.pm}
\tag{PME}
	- c_{\pmeps} (u_{\pmeps})_x
	-\left(( u_{\pmeps})_x u_{\pmeps}\right)_x
		= \eps (u_{\pmeps})_{xx} + u_{\pmeps}(1- u_{\pmeps})
			\qquad\text{ in } \R.
\ee
We establish that this convergence does indeed occur. The main challenge is obtaining enough compactness to ensure convergence of the nonlinear terms in~\eqref{e.tw}: namely, the quadratic term $u^2$ and, especially, the term $(v_x u)_x$. 
This model, the porous medium FKPP equation, is well understood, see~\cite{Aronson1980,dePabloVazquez, KawasakiShigesadaIinuma} and references therein.  In particular, it is known that the minimal speed of~\eqref{e.pm} is strictly positive and an explicit expression for this quantity is available; see~\eqref{e.c^*_pm} in Subsection \ref{ss:halfline}. We use this, together with our convergence result, to deduce a lower bound on the limiting speed $c_{\pmeps}$, from which the estimate~\eqref{e.c81001} follows.

The second asymptotic limit we consider is $\nu\rightarrow \nu_{\hypnu}>0$ and $-\chi \to \infty$. In this case, the limiting equation is expected to be
\be\label{e.hyp}
\tag{HYP}
\left\{
	\begin{split}
		- c_{\hypnu} (u_{\hypnu})_{x}
			- \left((v_{\hypnu})_ x  u_{\hypnu}\right)_x
			&=  u_{\hypnu}(1- u_{\hypnu})\\
		- \nu_{\hypnu} (v_{\hypnu})_{xx}
			&= u_{\hypnu} - v_{\hypnu}
	\end{split}
	\qquad\text{ in } \R.
	\right.
\ee 
This equation has been introduced and studied in ~\cite{FuGrietteMagal-JMB,FuGrietteMagal-DCDSB, FuGrietteMagal-M3AS}. 
In addition,~\eqref{e.hyp} and similar models  are used in modeling tumor growth; see, e.g.,~\cite{PerthameVauchelet,FuGrietteMagal-JMB,KimTuranova,PerthameQuirosVazquez,KimPozar}. We prove that, in this regime, traveling wave solutions to~\eqref{e.tw} do indeed converge to those of~\eqref{e.hyp}.   Two key challenges are that  solutions to~\eqref{e.hyp} are irregular (in some cases, discontinuous), and that the theory of solutions to~\eqref{e.hyp}  is not as well-developed as that for~\eqref{e.pm}.  Thus, a large portion of our analysis is devoted to characterizing the general behavior of solutions to~\eqref{e.hyp} and establishing a lower bound on the minimal speed $c^*_{\hypnu, \nu_\hypnu}$. We are thus able to establish a lower bound on any limiting speed $c_{\hypnu}$, and from there deduce~\eqref{e.c81002}.

In addition, for each $\nu_{\hypnu}>0$, we  construct solutions of~\eqref{e.hyp}, called sharp traveling waves, and show that, as $\nu_{\hypnu}\rightarrow 0$, these special solutions converge to those of \eqref{e.pm} with $\ep=0$. We use this to deduce partially matching upper bounds on the speeds. 

We postpone the rigorous statements of our main results until the next section. For the convenience of the reader, we summarize them here:
\begin{description}
\item[\Cref{t.lower.pm}] Traveling wave solutions of~\eqref{e.tw} converge, as  $\nu\rightarrow 0$ and $\frac{1}{-\chi}\rightarrow \ep$, for $\ep\geq 0$, to those of~\eqref{e.pm}. Moreover, the limiting speed is bounded from below away from zero.
\item[\Cref{t.lower.hyp}] Traveling wave solutions of~\eqref{e.tw} converge, as $\frac{1}{-\chi}\rightarrow 0$ and $\nu\rightarrow \nu_{\hypnu}$, for  $\nu_{\hypnu}>0$, to those of~\eqref{e.hyp}. Moreover, the limiting speed is bounded from below away from zero.
\item[\Cref{t.upper.hyp} and \Cref{t.upper.pm}] There exist sharp traveling wave solutions to ~\eqref{e.hyp}, and they converge, as $\nu_{\hypnu}\rightarrow 0$, to those of \eqref{e.pm} with $\ep=0$. This yields an upper bound on the minimal speed of traveling wave solutions to~\eqref{e.tw} in the $\frac{1}{-\chi}\rightarrow 0$ and $\nu\rightarrow \nu_{\hypnu}$ limit.

 \end{description}

\section{Background and main results}\label{s.main results}

This section is devoted to rigorously stating our main results and describing their proofs and significance. In the first subsection, we state several preliminary facts and fix notation. Then, in Subsection \ref{ss.pme results}, we discuss our results on the porous medium regime; in particular, we state Theorem \ref{t.lower.pm}. Subsection \ref{ss:tw for hyp results} is devoted to our work on traveling wave solutions for~\eqref{e.hyp}. Then, in Subsection \ref{ss:hyp results}, we state Theorem \ref{t.lower.hyp}, which concerns the limit of solutions to~\eqref{e.tw} in the hyperbolic regime, and describe its proof. Our work on the matching lower bounds for the speeds, Theorems \ref{t.upper.hyp} and \ref{t.upper.pm}, is then discussed in Subsection \ref{ss:upper results}. Finally, Subsection \ref{ss:related work} is devoted to discussion of related work.

\ssection{Preliminaries}

Before precisely stating and discussing our results, it is useful to make some basic observations about traveling wave solutions $(u,v)$ to~\eqref{e.tw}.  First, 
we note that $v$ is given by the convolution of $u$ and a kernel:
\be\label{e.kernel}
	v(x)
		= (\cK_\nu * u)(x)
	\qquad
	\text{where}
	\quad
	\cK_\nu(x) = \frac{1}{\sqrt \nu} \cK\left( \frac{x}{\sqrt \nu}\right)
	\quad\text{and}\quad
	\cK(x) = \frac{1}{2} e^{-|x|}. 
\ee
Second, using~\eqref{e.kernel}, a simple comparison principle argument shows that
\be\label{e.0<u<1}
	0 < u, v < 1.
\ee
Finally, note that the first equation in~\eqref{e.tw} may also be written as
\be\label{e.tw2}
	- (c + v_x) u_x
		= \frac{1}{|\chi|} u_{xx} + u \left( \frac{\nu + v}{\nu} - \left(\frac{\nu + 1}{\nu}\right) u\right).
\ee
This is often useful in the sequel.  

\subsubsection{Discussion of minimal speeds}
\label{ss:halfline}
Here, we discuss and fix notation for minimal speeds of traveling waves for the various problems that we work with in this paper. 
We begin with,
\be
\label{e.minimal_speed}
	c^*_{\chi,\nu} = \inf \{c : \text{ there is a traveling wave solution } (c,u, v) \text{ to }\eqref{e.tw} \}.
\ee
In analogous models, there is an infinite half-line of speeds admitting traveling waves.  We believe that this property holds here: a traveling wave solution $(c, u, v)$ to~\eqref{e.tw}  exists for all $c \in [c_{\chi,\nu}^*,\infty)$. We do not prove this here, although we believe that the proof is straightforward.   Instead, we simply note that \cite[Theorem~1.2]{HamelHenderson} and~\eqref{e.kernel} imply that
\be
	c^*_{\chi,\nu} \geq \frac{2}{\sqrt{-\chi}},
\ee
while an easy compactness argument yields that the infimum in~\eqref{e.minimal_speed} is attained.  In this sense, the term minimal speed for $c^*_{\chi,\nu}$ is justified, although we caution the reader that this is often used in the context of having a half-line of speeds.

Next, we denote,
\be
c^*_{\pmeps,\ep} = \inf \{c : \text{ there is a traveling wave solution } (c_{\pmeps}, u_{\pmeps}) \text{ to }\eqref{e.pm} \}
\ee
This quantity, and the corresponding traveling wave solutions, are well-understood. For instance, in the case $\ep=0$, it is known \cite{Aronson1980} that no traveling wave solutions to~\eqref{e.pm} exist with speed $c_{\pmeps}\in [0,c^*_{\pmeps,0} )$; that the traveling wave with speed $c^*_{\pmeps,0} $ is sharp (which means $\{u_{\pmeps}>0\}=(\omega, +\infty)$ for some finite $\omega$); and that if $c_{\pmeps}>c^*_{\pmeps,0}$, then $u_{\pmeps}$ is positive on all of $\R$.  
Moreover, we have the following expression,
~\cite{Aronson1980,dePabloVazquez, KawasakiShigesadaIinuma}
\be\label{e.c^*_pm}
	c^*_{\pmeps,\ep}
		= \begin{cases}
			\frac{1}{\sqrt 2} + \sqrt 2 \eps
				\qquad &\text{ if } 1 > 2 \eps,\\
			2 \sqrt \eps
				\qquad &\text{ if } 1 \leq 2 \eps,
		\end{cases}
\ee
which may be found (upon carrying out the appropriate rescaling) in \cite[Eq (32)]{KawasakiShigesadaIinuma} and \cite[Section 2]{Aronson1980} for $\ep=0$.

Finally, we will denote, 
\be
	c^*_{\hypnu,\nu_{\hypnu}} = \inf \{c : \text{ there is a traveling wave solution } (c_{\hypnu}, u_{\hypnu}, v_{\hypnu}) \text{ to }\eqref{e.hyp}\}.
\ee
In our work, we consider solutions to~\eqref{e.hyp} in the sense of Definition \ref{def:hyp}. In ~\cite{FuGrietteMagal-M3AS, GrietteMagalZhao} specific traveling wave solutions to~\eqref{e.hyp} have been constructed and estimates on their speeds obtained. However, the behavior of  arbitrary traveling waves has not, to our knowledge, been studied before.  In particular, to our knowledge, no lower bounds exist in the literature for $c^*_{\hypnu,\nu}$.

\ssection{The porous medium regime}
\label{ss.pme results}

Our main result on this limit is:

\begin{theorem}\label{t.lower.pm}
Fix any $\eps \geq 0$.
\begin{enumerate}[(i)]
	\item \label{t.speed.pm}
		The minimal speeds have the asymptotics:
		\be
			\liminf_{-\chi \to \frac{1}{\eps}, \nu \to 0} c_{\chi,\nu}^* \geq c^*_{\pmeps,\ep}.
		\ee
	\item \label{t.convergence.pm}
		Consider any sequence $(\chi_n, \nu_n)$ and any corresponding traveling wave solutions $(c_n, u_n, v_n)$ of~\eqref{e.unscaled_tw}.  If $\limsup c_n < \infty$, then there exists $(c,u)$ solving~\eqref{e.pm} and a subsequence indexed by $n_k$ such that, under the normalization
		\be\label{e.normalization.pm}
			\min_{x \leq 0} u_n(x) = u_n(0)= \delta
		\ee
		for some fixed $\delta \in (0,1)$, the following convergence of $(c_{n_k}, u_{n_k}, v_{n_k})$ to $(c, u)$ holds:
		\be
			c_{n_k} \to c, 
			\quad u_{n_k} \to u \text{ in } L^\infty_{\rm loc},
			\quad\text{ and }\quad
			v_{n_k} \rightharpoonup u \text{ in }H^1_{\rm loc}.
		\ee
\end{enumerate}
\end{theorem}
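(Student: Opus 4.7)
Part~(ii) is the main content, and Part~(i) will follow by applying it to a sequence of minimal-speed waves $c_n = c^*_{\chi_n,\nu_n}$ realizing the liminf: any subsequential limit $c$ is then a speed for~\eqref{e.pm}, so $c\geq c^*_{\pmeps,\eps}$ by the definition of the porous-medium minimal speed. Throughout, I would use monotonicity of Fisher--KPP traveling waves for~\eqref{e.tw} -- namely that $u_n$ is decreasing (inherited from~\cite{Henderson2021} or recovered via a standard sliding argument), and hence by~\eqref{e.kernel} that $v_n$ is also decreasing. This gives both sequences uniform $BV$ bounds with total variation $1$, in addition to the pointwise bounds~\eqref{e.0<u<1}.

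\textbf{The key a priori estimate.} The heart of the proof is a uniform Lipschitz bound on $v_n$ that survives the limits $\nu_n\to 0$ and $1/|\chi_n|\to\eps$. Writing the first equation of~\eqref{e.tw} in divergence form and integrating from $x$ to $+\infty$ yields
\[
    \frac{u_{n,x}(x)}{|\chi_n|} + c_n u_n(x) + v_{n,x}(x)\,u_n(x) = \int_x^\infty u_n(1-u_n)\,dy.
\]
Monotonicity gives $u_{n,x}\leq 0$ and $v_{n,x}\leq 0$, so nonnegativity of the right-hand side forces $|v_{n,x}|u_n \leq c_n u_n$; since $u_n>0$ on $\R$ by~\eqref{e.0<u<1}, this implies the pointwise bound $|v_{n,x}|\leq c_n$. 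As $c_n$ is uniformly bounded by hypothesis, the family $\{v_n\}$ is uniformly Lipschitz on $\R$.

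\textbf{Compactness and passage to the limit.} After the normalization~\eqref{e.normalization.pm}, Arzel\`a--Ascoli extracts a subsequence along which $v_n\to v$ locally uniformly with $v$ Lipschitz, and Helly's selection theorem extracts a further subsequence along which $u_n\to u$ pointwise with $u$ monotone. Rewriting the second equation of~\eqref{e.tw} as $v_n-u_n=\nu_n v_{n,xx}$ and testing against a compactly supported $\phi$ gives $\int(v_n-u_n)\phi\,dx=\nu_n\int v_n\phi''\,dx\to 0$, so $u=v$ a.e.; combining monotonicity of $u$ with continuity of $v$ promotes this to $u\equiv v$ pointwise, whence $u$ is Lipschitz and a Dini-type theorem for monotone sequences with continuous limit upgrades $u_n\to u$ to locally uniform convergence. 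The bound $|v_{n,x}|\leq c_n$ supplies weak-$*$ convergence $v_{n,x}\rightharpoonup u_x$ in $L^\infty_{\mathrm{loc}}$, which combined with uniform convergence $u_n\to u$ enables passage to the limit in the distributional formulation of~\eqref{e.tw}: the terms $u_{n,xx}/|\chi_n|$, $(v_{n,x}u_n)_x$, and $u_n(1-u_n)$ tend respectively to $\eps u_{xx}$, $(u_x u)_x$, and $u(1-u)$, which is exactly~\eqref{e.pm}. The boundary conditions $u(-\infty)=1$ and $u(+\infty)=0$ follow from monotonicity of $u$ together with the observation that any translate-limit at $\pm\infty$ must be a constant equilibrium of~\eqref{e.pm} and hence belongs to $\{0,1\}$; the normalization $u(0)=\delta\in(0,1)$ fixes which one.

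\textbf{Main obstacle.} The decisive step is the uniform Lipschitz bound on $v_n$, because without it there is no way to control the nonlocal drift $(v_{n,x}u_n)_x$ in a regime that simultaneously loses ellipticity ($1/|\chi_n|\to\eps$, possibly $0$) and the smoothing effect of the convolution ($\nu_n\to 0$). Identifying monotonicity as the structural feature that makes this sign-based integrated identity possible -- and thereby converting an \emph{a priori} rough nonlocal drift into a first-order term with definite sign -- is the conceptual crux.
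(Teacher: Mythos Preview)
Your argument hinges on global monotonicity of the profiles $u_n$, and this is precisely where it breaks down. The only monotonicity result available for arbitrary traveling waves of~\eqref{e.tw} is the \emph{partial} one in \Cref{lem:Chris monot}: $u$ is nonincreasing on $(x_d,\infty)$ where $x_d=\inf\{x:u(x)<\tfrac{2}{2+1/\nu}\}$. As $\nu\to 0$ the threshold $\tfrac{2}{2+1/\nu}=\tfrac{2\nu}{2\nu+1}\to 0$, so $x_d\to+\infty$ and the monotone region shrinks to nothing. A sliding argument cannot recover global monotonicity here because~\eqref{e.tw} does \emph{not} admit a comparison principle (the drift $v_x$ depends nonlocally and nonlinearly on $u$). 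Without $u_{n,x}\leq 0$, your integrated identity no longer forces $|v_{n,x}|\leq c_n$: the term $u_{n,x}/|\chi_n|$ can have the wrong sign, and the crucial inequality $(c_n+v_{n,x})u_n\geq 0$ fails. Everything downstream---Helly, the $BV$ bound, the identification $u=v$ via Dini---collapses with it. The very form of the normalization~\eqref{e.normalization.pm}, which specifies $\min_{x\leq 0}u_n(x)=u_n(0)$ rather than just $u_n(0)=\delta$, already signals that $u_n$ need not be monotone on $(-\infty,0]$.

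The paper circumvents this via two substitutes for monotonicity. First, an entropy estimate (\Cref{lem:apriori bd nu to 0}): multiplying~\eqref{e.tw} by $\log u$ and integrating, together with the factorization $\cK_\nu=\varphi_\nu*\varphi_\nu$ of \Cref{lem:conv twice}, yields $\int(\varphi_\nu*u_{n,x})^2\,dx\leq c_n$, from which $\|v_{n,x}\|_{L^2}\leq\sqrt{c_n}$ and a uniform $C^{1/2}$ bound on $v_n$ follow (\Cref{c.holder}). Second, a decay-of-oscillations estimate (\Cref{lem:u-v}) that uses only the weak information of \Cref{l.no_local_min} (values of $u$ at local extrema are pinned near $v$) together with the H\"older regularity of $v_n$ to prove $\|u_n-v_n\|_{L^\infty}\leq C(\sqrt{c_n}+1)\nu_n^{1/8}$. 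These two ingredients give enough compactness to pass to the limit in both nonlinear terms $(v_{n,x}u_n)_x$ and $u_n^2$ without ever invoking global monotonicity of $u_n$.
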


We make three notes.  First, the normalization~\eqref{e.normalization.pm} is not a restriction: the system~\eqref{e.tw} is translation invariant and $u$ is a continuous function connecting $1$ and $0$, so~\eqref{e.normalization.pm} holds after a suitable translation.

Second, due to the Sobolev embedding theorem, the convergence of $v_{n_k}$ occurs in $C^{\sfrac{1}{2}}_{\rm loc}$.  Surprisingly, this is stronger than the notion of convergence of $u_{n_k}$. This is related to the main difficulties and the method of proof, see below.

Finally, we note that \Cref{t.lower.pm}.(i) follows directly from \Cref{t.lower.pm}.(ii).  Indeed, by simply taking a sequence $\chi_n$ and $\nu_n$ such that corresponding speed $c_n$ attains the limit inferior of $c^*_{\chi,\nu}$, the convergence in \Cref{t.lower.pm}.(ii) implies that the limiting speed $c$ is larger than $c^*_{\pmeps,\ep}$.  We describe this more precisely in the proof of \Cref{t.lower.pm} (see \Cref{s.lower.pm}).

We now describe the difficulties inherent in proving \Cref{t.lower.pm}.(ii). First, we note the possible issue of degeneracy of $v_n$ as $\nu_n \to 0$.  Indeed,  the second equation in~\eqref{e.tw} together with~\eqref{e.0<u<1} yield the immediate bound,
\be\label{e.c81003}
	\|(v_n)_x\|_{L^\infty} \leq O(\sfrac{1}{\sqrt\nu_n}).
\ee
Since the right-hand side approaches infinity as $\nu_n$ approaches zero, one  cannot rely simply on the relative compactness of $C^1$ in $L^\infty$.  Further, examining~\eqref{e.tw}, it is clear that, even in the $\eps>0$ case, one cannot rely on elliptic regularity theory to provide strong enough estimates on $u_n$ to take the limit in \Cref{t.lower.pm}, as {\em a priori} the $v_n$-coefficients may blow up like $O(\sfrac{1}{\nu_n})$.

A na\"ive first attempt to prove \Cref{t.lower.pm} might be to use the $L^\infty$-bounds in~\eqref{e.0<u<1} to pass to a weak-$*$ limit in $L^\infty$.  This, however, will fail due to the quadratic terms.  Even the $u^2$ term in~\eqref{e.tw} is problematic, as weak-* convergence in $L^\infty$ is not sufficient to guarantee that the weak-$*$ limit of $u_{n}^2$ is $u^2$.  More worrisome is the $\left((v_n)_xu_n\right)_x$ term.  
Hence, one requires greater regularity of $u_n$ or $v_n$, uniform in $n$, to pass to the limit.

Since we are performing a `vanishing viscosity' limit, a second possible approach might be to take inspiration from the robust theory of viscosity solutions~\cite{CrandallIshiiLions} that was developed to solve vanishing viscosity problems and attempt to perform a half-relaxed limit~\cite{BarlesPerthame}, which requires only $L^\infty$-regularity of the involved functions.  One's optimism for this approach grows when considering the analogy with numerical schemes, which were proven to converge via visocity solution methods~\cite{BarlesSouganidis1991}, due to the similarity of $v_x$ to a discrete derivative of $u$:
\be
	v_x(x) \approx \frac{u(x +\sqrt\nu) - u(x-\sqrt\nu)}{2\sqrt\nu},
\ee
at least in an averaged sense.  This approach, however, does not work, as the work in~\cite{BarlesSouganidis1991} requires the assumption that the scheme is `monotone.'  This, roughly, translates to~\eqref{e.tw} admitting a comparison principle, which it does not.  As such, this approach does not work.

To overcome these difficulties, we combine two major ingredients.  The first is an energy estimate, \Cref{lem:apriori bd nu to 0}:  by multiplying~\eqref{e.tw} by $\log u$, integrating by parts, and using that $\cK_\nu = \varphi_\nu*\varphi_\nu$ for some $\varphi$ (see~\eqref{e.c71901}), we find,
\be
	\int |\varphi_\nu * (u_n)_x|^2 \dd x
		\leq c.
\ee
Since $\varphi_\nu$ is `nice enough,' this estimate leads to a uniform $H^1$-bound and H\"older continuity of $v_n$.  Hence, we can take a weak limit of $v_n$ in $H^1$, which is sufficient regularity to pass to a limit with the $\left((v_n)_x u_n\right)_x$ term in~\eqref{e.tw}.  Unfortunately, it is does not immediately help with the $u_n^2$ term in~\eqref{e.tw}.

The next major ingredient, \Cref{lem:u-v}, is a decay of oscillations estimate, which states, roughly,
\be\label{e.oscillations}
	\max_{[x - \nu_n^{\sfrac{1}{4}}, x + \nu_n^{\sfrac{1}{4}}]} u_n
		- \min_{[x - \nu_n^{\sfrac{1}{4}}, x + \nu_n^{\sfrac{1}{4}}]} u_n
		\leq O(\nu_n^{\sfrac{1}{8}}).
\ee
Note that this is not enough to provide a uniform bound in any H\"older space, but it is enough, along with the regularity of $v_n$ to imply that
\be
	\|u_n-v_n\|_{L^\infty}
		\leq O(\nu_n^{\sfrac{1}{8}}).
\ee
This, along with the convergence of $v_n$, is enough to understand the convergence of $u_n^2$.  
The key \Cref{lem:u-v} is established by using the regularity of $v_n$ and a partial monotonicity result (\Cref{l.no_local_min}).

\ssection{Traveling wave solutions of~\eqref{e.hyp}}
\label{ss:tw for hyp results}

As described in the introduction, a key part of our work is understanding solutions  of~\eqref{e.hyp} and bounds on the traveling wave speeds. This subsection is devoted to describing our main results on this. 

\sssection{Definition and general properties of hyperbolic traveling waves}
 The first equation in~\eqref{e.hyp} is degenerate, so, before we can proceed, the notion of solution must be clarified.  To motivate the definition, notice that the first equation in~\eqref{e.hyp} may also be written (cf.~\eqref{e.tw2}):
\be\label{e.hyp2}
	- (c + (v_\hypnu)_x)  (u_{\hypnu})_x
		= u_{\hypnu} \left( \frac{\nu_{\hypnu} + v_{\hypnu}}{\nu_{\hypnu}} - \left(\frac{\nu_{\hypnu} + 1}{\nu_{\hypnu}}\right) u_{\hypnu}\right).
\ee
This is often useful in the sequel.  In particular, we see that $u_{\hypnu}$ is satisfies a `nice' ordinary differential equation away from any zeros of $c + (v_{\hypnu})_x$.  Hence, we can restrict to classical solutions away from these singular points. This leads to the following:
\begin{definition}
\label{def:hyp}
	Let $c_{\hypnu}\in [0,+\infty)$, $u_{\hypnu}\in L^\infty$, and $v_{\hypnu}\in W^{2,\infty}$, and denote
	\be
		\cZ := \{x : c_{\hypnu} + (v_{\hypnu})_x(x) = 0\}.
	\ee
	We say $(c_{\hypnu},u_{\hypnu},v_{\hypnu})$ is a solution to~\eqref{e.hyp} if the second equation in~\eqref{e.hyp} is satisfied almost everywhere and
	$u_{\hypnu}$ is nonnegative, bounded, in $C^1_{\rm loc}(\cZ^c)$,  satisfies~\eqref{e.hyp} pointwise on $\cZ^c$, and satisfies
	\be\label{e.c81501}
		u_{\hypnu} \left( \frac{\nu_{\hypnu} + v_{\hypnu}}{\nu_{\hypnu}} - \left(\frac{\nu_{\hypnu} + 1}{\nu_{\hypnu}}\right) u_{\hypnu}\right)
			= 0
			\qquad\text{ in } \Int(\cZ).
	\ee
\end{definition}
This definition  follows along the lines of previous works~\cite{FuGrietteMagal-JMB,FuGrietteMagal-DCDSB, FuGrietteMagal-M3AS}. 

Amazingly, despite being a fairly weak notion of solution, \Cref{def:hyp} is strong enough to prove that, for any traveling wave, $\cZ$ must either be empty or a single point and, if there is a point in $\cZ$, then there is a jump discontinuity at that point:

\begin{proposition}[Hyperbolic traveling waves]\label{prop:cardZleq1}
	Suppose that $(c,u,v)$ is a solution of~\eqref{e.hyp} in the sense  of Definition \ref{def:hyp}. Consider that $u$ is nonconstant; that is, assume that both $\{u < 1\}$ and $\{u > 0\}$ have positive measure.  Then $c>0$ and
	there are two possibilities:
	\begin{enumerate}
		\item $\mathcal Z=\varnothing$. In that case, $u\in C^\infty_{\rm loc}(\mathbb R)$ is a classical solution to \eqref{e.hyp}.
		\item  $\mathcal Z$ is consists of a single point: $\mathcal{Z}=\{x_0\} $. In that case, $u$ has a single jump discontinuity at $x_0$, with $\{u>0\} = (-\infty, x_0)$. 
		Moreover, $u \in C^\infty_{\rm loc}(\R\setminus\{x_0\})$ and $u$ satisfies, at the jump,
		\be
			u(x_0^-) = \frac{\nu_{\hypnu} + v(x_0)}{\nu_{\hypnu} +1}.
		\ee
	\end{enumerate}
\end{proposition}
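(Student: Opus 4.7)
The proof plan rests on a single monotone quantity. Rewriting the first equation of~\eqref{e.hyp} in divergence form shows that
\begin{equation*}
	G(x) := -(c + v_x(x))\, u(x)
		\quad \text{satisfies} \quad
	G'(x) = u(x)(1 - u(x)),
\end{equation*}
and (using the bound $0 \leq u \leq 1$, which follows from a standard maximum-principle argument applied to the pointwise ODE~\eqref{e.hyp2}) $G$ is non-decreasing. Since $G$ vanishes at every point of $\cZ$, we get $G \leq 0$ to the left of any $\cZ$-point and $G \geq 0$ to the right; combined with $u \geq 0$, this pins the sign of $c + v_x$ on each side of $\cZ$ wherever $u > 0$. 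All structural conclusions of the proposition will follow from this single monotonicity.

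For $c > 0$, I argue by contradiction: if $c = 0$, then $G = -v_x u$ is bounded and non-decreasing. Using the convolution representation $v = \cK_\nu * u$ and the implicit traveling-wave boundary behavior $v_x(\pm\infty) = 0$, we obtain $G(\pm\infty) = 0$, which forces $u(1-u) \equiv 0$, i.e., $u \in \{0,1\}$ a.e. Combined with $u = v - \nu v_{xx}$ a.e.\ and the continuity of $v$, this cannot be reconciled with the nonconstancy hypothesis. For $\Int(\cZ) = \emptyset$, observe that on any connected component of $\Int(\cZ)$, $v_x \equiv -c$, so $v$ is affine and strictly decreasing (as $c > 0$), hence $v_{xx} \equiv 0$. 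The second equation then forces $u = v$ a.e., while~\eqref{e.c81501} forces $u \in \{0, (\nu+v)/(\nu+1)\}$ pointwise; combining, $v \in \{0,1\}$ a.e., contradicting strict monotonicity.

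To show $|\cZ| \leq 1$, suppose two distinct points $a < b$ lie in $\cZ$. Since $G(a) = G(b) = 0$ and $G$ is non-decreasing, $G \equiv 0$ on $[a,b]$, so $u \in \{0,1\}$ a.e.\ there. Smoothness of $u$ on $\cZ^c$ (by bootstrapping the pointwise ODE~\eqref{e.hyp2}) combined with ODE uniqueness at the equilibria $u = 0$ and $u = (\nu+v)/(\nu+1)$ forces $u$ to be locally constant and $\{0,1\}$-valued on each connected component of $[a,b]\setminus\cZ$. On each interval where $u$ is constantly $0$ or $1$, the second equation reduces to a linear ODE for $v$ with explicit exponential general solution; matching these against the global convolution representation $v = \cK_\nu * u$ together with the boundary constraints $v_x(a) = v_x(b) = -c$ produces the desired contradiction.

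It remains to analyze the case $\cZ = \{x_0\}$. The sign analysis on $(x_0,\infty)$, together with $c+v_x \to c > 0$ at $+\infty$, rules out $u > 0$ on any right-unbounded subinterval; ODE uniqueness at $u = 0$ then upgrades any single zero to $u \equiv 0$ on $(x_0,\infty)$. The symmetric argument (plus nonconstancy of $u$) gives $u > 0$ on $(-\infty, x_0)$, and smoothness on $\R \setminus \{x_0\}$ follows by bootstrapping the ODE. For the jump formula, writing the ODE on $(-\infty, x_0)$ as
\begin{equation*}
	u_x = -\frac{u\bigl((\nu+v)/\nu - ((\nu+1)/\nu)u\bigr)}{c+v_x},
\end{equation*}
the denominator tends to zero as $x \to x_0^-$; since $u$ is bounded, the numerator must vanish in the limit, so $u(x_0^-) \in \{0,\, (\nu+v(x_0))/(\nu+1)\}$. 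The value $0$ is excluded by ODE uniqueness (which would propagate $u \equiv 0$ backwards from $x_0$ and contradict $u > 0$ on $(-\infty, x_0)$), yielding the stated jump formula. The main obstacle I expect is the $|\cZ| \leq 1$ step: ruling out the step-function configurations requires carefully matching piecewise-explicit exponential solutions for $v$ against the global convolution structure, and this will be the most delicate piece of case analysis.
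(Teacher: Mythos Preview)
Your monotone quantity $G(x) = -(c+v_x)u$ with $G' = u(1-u)$ is a genuinely different organizing principle from the paper's approach, which instead relies on an explicit integral formula for $u$ along trajectories of the vector field $-(c+v_x)$ (their Lemma~8.3), together with a convexity argument for $v$ near $\cZ$-points (their Lemma~8.5). Your route is cleaner for the structural claims. In fact your $|\cZ|\leq 1$ step is easier than you anticipate: from $G\equiv 0$ on $[a,b]$ and $c+v_x\neq 0$ on $\cZ^c$ you get $u\equiv 0$ on $[a,b]\cap\cZ^c$ directly (not merely $u\in\{0,1\}$), and since $\Int(\cZ)=\emptyset$ this gives $u=0$ a.e.\ on $[a,b]$; then the second equation forces $v_{xx}=v/\nu>0$, so $v_x$ is strictly increasing, contradicting $v_x(a)=v_x(b)=-c$. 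No piecewise-exponential matching is needed. Two minor fixes: your claim $v_x(\pm\infty)=0$ is not part of the hypotheses, but boundedness of $v$ gives sequences along which $v_x\to 0$, which suffices; and the bound $u\leq 1$ from a ``standard maximum principle'' needs a word of justification for a first-order ODE with no boundary data.

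The genuine gap is in the jump formula. First, ``$u$ bounded, so the numerator must vanish in the limit'' is not a valid inference: boundedness of $u$ does not control $u_x$, and you have not shown $u(x_0^-)$ even exists. The paper gets existence and the value from the explicit formula; without it you need a careful phase-line analysis (e.g., show that once $u$ is on one side of the moving equilibrium $(\nu+v)/(\nu+1)$ near $x_0$, the Lipschitz bound $c+v_x\leq C(x_0-x)$ forces $\int u_x$ to diverge unless $u$ approaches an equilibrium value). Second, and more seriously, your ``ODE uniqueness'' argument to exclude $u(x_0^-)=0$ is wrong: the ODE is singular at $x_0$, uniqueness does not apply there, and in fact the first equation alone \emph{does} admit solutions with $u>0$ on $(-\infty,x_0)$ and $u(x_0^-)=0$. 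The obstruction comes from the \emph{second} equation: if $u\to 0$ from both sides of $x_0$, then $v_{xx}(x_0)=v(x_0)/\nu>0$, so $v_x$ is strictly increasing through $x_0$, giving $c+v_x<0$ just to the left of $x_0$ --- contradicting $c+v_x>0$ on $(-\infty,x_0)$, which you correctly deduced from boundedness of $v$. This coupling argument (essentially the paper's Lemma~8.4) is the missing ingredient.
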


Although this full claim is difficult to prove, the reason that $\cZ$ is nowhere dense is quite easy to see.  Consider a traveling wave solution $(c_{\hypnu}, u_{\hypnu}, v_{\hypnu})$ to~\eqref{e.hyp} with positive speed $c_{\hypnu}>0$.  Were $\Int(\cZ)$ to be nontrivial, then
\be\label{e.c81502}
	0 = u_{\hypnu}\left( \frac{\nu_{\hypnu}+v_{\hypnu}}{\nu_{\hypnu}} - \frac{\nu_{\hypnu} + 1}{\nu_{\hypnu}} u_{\hypnu}\right)
		\quad\text{ in } \Int(\cZ).
\ee
On the other hand, the definition of $\cZ$ implies that $(v_{\hypnu})_{xx} \equiv 0$ on $\Int(\cZ)$ so that, by~\eqref{e.hyp},
\be\label{e.c92901}
	u_{\hypnu} = v_{\hypnu}
	\quad\text{ in }
	\Int(\cZ).
\ee
These two equalities (that is,~\eqref{e.c81502} and~\eqref{e.c92901}) hold simultaneously only if $v \equiv 0$ or $v \equiv 1$.  This, however, is not compatible with the definition of $\cZ$, which implies that $(v_{\hypnu})_x = -c_{\hypnu} < 0$.  This contradiction implies that $\cZ$ must be nowhere dense.

The proof of %\Cref{p.informal} 
\Cref{prop:cardZleq1}, which is given in Section \ref{s.technical}, is based on a careful analysis of~\eqref{e.hyp}. There are three major steps to this: (1) we use the observation from~\cite[Proof of Lemma 5.4]{FuGrietteMagal-M3AS} that $u$ satisfies a formula that is explicit in $v$ on $\cZ^c$ (\Cref{lem:explicit-solution}); (2) we show a strong maximum principle type argument: $u$ cannot tend to $0$ at some point in $\cZ^c$ without being uniformly zero on the entire maximal interval in $\cZ^c$; (3) $u$ cannot tend to positive limits on the endpoints of a maximal interval of $\cZ$ due to a technical argument using the convexity of $v$ (coming from the second equation in~\eqref{e.hyp} and the explicit form of the limit coming from (1)) near the endpoints of the interval.

Then (3) shows that any maximal interval of $\cZ^c$ must be half-infinite.  This implies that $\cZ$ is either empty, a single point, or a closed interval.  On the other hand, we have already described above why $\cZ$ has an empty interior.  It follows that $\cZ$ is either empty or a single point.

\subsubsection{Exponential decay}

We establish another key property that holds for solutions of~\eqref{e.hyp}, \eqref{e.tw}, as well as a certain `slab problem' for~\eqref{e.tw}: namely,  once $u$ reaches a critical level $\nu/(\nu+1)$, it decays exponentially. This is stated precisely in \Cref{lem:exp decay}.  

The proof of \Cref{lem:exp decay} is quite intricate, but essentially boils down to the fact that if $u$ were approximately constant on a large interval, then, due to~\eqref{e.kernel}, $v\approx u$ and $v_x \approx 0$ on that interval.  These two approximate equalities, when combined with~\eqref{e.hyp2}, imply that $u\approx 1$ or $u\approx 0$, which is not consistent with the fact that $u$ is approximately constant and $0 < u < 1$.  Hence, $u$ must `drop' by a proportion over every fixed large interval.  The main complications in proving this are (1) suitably quantifying the above heuristics, and (2) dealing with the $u_{xx}$ term in the case~\eqref{e.tw}.%, see below.

The exponential decay of \Cref{lem:exp decay} is used in various places in the paper, such as \Cref{t.upper.hyp} (see below).  As such, it is important that it applies uniformly for~\eqref{e.hyp} and~\eqref{e.tw}.  Hence, complication (2) is unavoidable in our work. It is also the main source of difficulty.

\sssection{Bounds on the speed of solutions of~\eqref{e.hyp}}\label{s.hyp_speed}

In order to show that \Cref{t.lower.hyp} does not yield a trivial bound, we show that $c_{\hypnu, \nu_{\hypnu}}^* > 0$:  
\begin{proposition}\label{p.speed.hyp}
	Fix $\nu_M > \nu_m>0$.
	Then there exists  $\underline c>0$, depending only on $\nu_M$ and $\nu_m$, such that, if $\nu_{\hypnu} \in (\nu_m,\nu_M)$, then,
	\be\label{e.c81209}
		c^*_{{\hypnu}, \nu_{\hypnu}} \geq \underline c.
	\ee
	If $(c_{\nu_{\hypnu}},u_{\nu_{\hypnu}},v_{\nu_{\hypnu}})$ is a traveling wave solution of~\eqref{e.hyp} and the singular set $\cZ$ of $v_{\nu_{\hypnu}}$ consists of a single point, then we have the refined estimate:
	\be\label{e.c81208}
		c_{\nu_{\hypnu}} \in \left( \frac{\sqrt \nu_\hypnu}{2\nu_\hypnu + 1}, \frac{1}{2\sqrt\nu_\hypnu}\right).
	\ee
\end{proposition}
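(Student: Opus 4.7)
My plan is to first establish the refined estimate~\eqref{e.c81208} under the assumption $\cZ=\{x_0\}$, and then deduce the universal lower bound~\eqref{e.c81209} by a compactness argument that reduces to the sharp-wave case.

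\textbf{The sharp case.} Let $(c,u,v)$ be a traveling wave with $\cZ=\{x_0\}$. By~\Cref{prop:cardZleq1}, $\{u>0\}=(-\infty,x_0)$ and $u(x_0^-)=(\nu_\hypnu+v(x_0))/(\nu_\hypnu+1)$. Since $u\equiv 0$ on $(x_0,\infty)$, evaluating the kernel representation~\eqref{e.kernel} at $x_0$ produces $v_x(x_0)=-v(x_0)/\sqrt{\nu_\hypnu}$; combined with $c+v_x(x_0)=0$, this yields the key identity $v(x_0)=c\sqrt{\nu_\hypnu}$. The upper bound $c<\sfrac{1}{(2\sqrt{\nu_\hypnu})}$ is then immediate from~\eqref{e.0<u<1}, which gives $v(x_0)<\tfrac12$. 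For the lower bound, the change of variables $s=(x_0-y)/\sqrt{\nu_\hypnu}$ in~\eqref{e.kernel} rewrites
\be
v(x_0)=\tfrac12\int_0^\infty e^{-s}\,u(x_0-s\sqrt{\nu_\hypnu})\,\dd s,
\ee
so the intermediate monotonicity property $u\ge u(x_0^-)$ on $(-\infty,x_0)$ would yield $v(x_0)\ge u(x_0^-)/2$ (strict since $u(-\infty)=1>u(x_0^-)$). Substituting $u(x_0^-)=(\nu_\hypnu+v(x_0))/(\nu_\hypnu+1)$ and $v(x_0)=c\sqrt{\nu_\hypnu}$ and solving then gives exactly $c>\sqrt{\nu_\hypnu}/(2\nu_\hypnu+1)$.

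\textbf{Proof of the monotonicity property.} On $(-\infty,x_0)$ we have $c+v_x>0$, so~\eqref{e.hyp2} rearranges to
\be
u_x=-\frac{u(\nu_\hypnu+1)(u-w)}{\nu_\hypnu(c+v_x)},\qquad w:=\frac{\nu_\hypnu+v}{\nu_\hypnu+1},
\ee
with $u(x_0^-)=w(x_0)$. Since $u>0$ on $(-\infty,x_0)$ by~\Cref{prop:cardZleq1}, any critical point $x_\ast$ of $u$ satisfies $u(x_\ast)=w(x_\ast)$; differentiating the ODE shows $u_{xx}(x_\ast)=-uv_x/(\nu_\hypnu(c+v_x))|_{x=x_\ast}$, so any interior local minimum of $u$ forces $v_x(x_\ast)\le 0$. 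I would combine this sign constraint with the Lyapunov identity
\be
\frac{d}{dx}\bigl[u(c+v_x)\bigr]=-u(1-u)<0 \quad\text{on } (-\infty,x_0),
\ee
which makes $u(c+v_x)$ strictly decrease from $c$ at $-\infty$ to $0$ at $x_0^-$, together with the global convolution structure $v=\cK_{\nu_\hypnu}*u$ (which ties the regularity and sign of $w$ directly to $u$), to rule out any interior local minimum of $u$ strictly below the level $u(x_0^-)$. I anticipate this phase-plane / contradiction argument to be the principal technical obstacle: without ellipticity one cannot invoke a local maximum principle, and one must exploit the nonlocal coupling instead.

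\textbf{Universal lower bound.} For part~(i), assume for contradiction that $\nu_n\in(\nu_m,\nu_M)$ and traveling waves $(c_n,u_n,v_n)$ of~\eqref{e.hyp} satisfy $c_n\to 0$. Integrating the first equation of~\eqref{e.hyp} over $\R$ (splitting at the jump, if any, and using the jump relation) gives the identity
\be
c_n=\int_\R u_n(1-u_n)\,\dd x,
\ee
so $\int u_n(1-u_n)\,\dd x\to 0$. Since $0<u_n<1$, this forces $u_n\to \1_{(-\infty,0)}$ in $L^1_{\rm loc}$ after a suitable translation. Combined with the uniform $C^1$ bound on $v_n$ coming from $\nu_n\in[\nu_m,\nu_M]$ (via~\eqref{e.kernel}) and the uniform exponential decay of~\Cref{lem:exp decay}, we extract a subsequential limit $(0,u_\infty,v_\infty)$ which, in the sense of~\Cref{def:hyp}, solves~\eqref{e.hyp} with $\cZ_\infty=\{0\}$ and $c_\infty=0$. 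This violates the strict positivity just proved in~\eqref{e.c81208}, completing the proof.
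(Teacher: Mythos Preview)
Your proof has two genuine gaps, one in each part.

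\textbf{Sharp case.} You reduce~\eqref{e.c81208} to the claim that $u\ge u(x_0^-)$ on $(-\infty,x_0)$, then admit you have not proved this and call it ``the principal technical obstacle.'' The paper sidesteps this entirely. It invokes the already-available partial monotonicity result (\Cref{lem:monotonicity.hyp}): $u$ is nonincreasing once it drops below the fixed threshold $\tfrac{2\nu}{2\nu+1}$. This gives only
\[
u(x)\ge \min\Bigl\{\tfrac{2\nu}{2\nu+1},\ \tfrac{\nu+v(x_0)}{\nu+1}\Bigr\}\quad\text{for }x<x_0,
\]
but that is enough: if the minimum is the second term, plugging into the kernel at $x_0$ gives $v(x_0)\ge\tfrac12\cdot\tfrac{\nu+v(x_0)}{\nu+1}$, i.e.\ $v(x_0)\ge\tfrac{\nu}{2\nu+1}$; if the minimum is the first term, $v(x_0)\ge\tfrac12\cdot\tfrac{2\nu}{2\nu+1}=\tfrac{\nu}{2\nu+1}$ directly. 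Either way $c=v(x_0)/\sqrt{\nu}\ge\tfrac{\sqrt\nu}{2\nu+1}$. No phase-plane analysis is needed.

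\textbf{Universal lower bound.} Your compactness argument does not close. Suppose $c_n\to0$ and $u_n\to\1_{(-\infty,0)}$ as you describe. Then $v_\infty=\cK_{\nu_\infty}*\1_{(-\infty,0)}$ has $(v_\infty)_x<0$ \emph{everywhere} (the integrand in~\eqref{e.v'} is strictly negative). Since $c_\infty=0$, the singular set is $\cZ_\infty=\{x:(v_\infty)_x(x)=0\}=\varnothing$, not $\{0\}$. \Cref{def:hyp} then demands $u_\infty\in C^1_{\rm loc}(\R)$, which $\1_{(-\infty,0)}$ is not. So the limit is \emph{not} a solution of~\eqref{e.hyp} in the required sense, and you cannot invoke~\eqref{e.c81208} (or \Cref{prop:cardZleq1}) against it. The paper instead argues directly for a fixed wave with $\cZ=\varnothing$: then $c\ge\sup(-v_x)\ge -v_x(0)$, and the kernel formula~\eqref{e.v'} together with the normalization $u(0)=\tfrac{\nu}{\nu+1}$ and the exponential decay of \Cref{lem:exp decay} give an explicit lower bound on $-v_x(0)$ depending only on $\nu_m,\nu_M$.
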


We note that the dependence on $\nu_m$ can probably be removed by a limiting argument similar to the one contained in the proof of \Cref{t.upper.pm}, below (see, e.g., \Cref{lem:c leq 2} and \Cref{lem:c nonzero}).  

The bounds in~\eqref{e.c81208} match those proved in~\cite[Theorem~2.7]{FuGrietteMagal-M3AS} for the particular traveling wave constructed there.  The  novelty in~\eqref{e.c81208} is that it holds for {\em any} traveling wave with a discontinuity, while the proof in~\cite{FuGrietteMagal-M3AS} relies on the monotonicity of the constructed wave.  Our proof uses essentially the same observations as~\cite{FuGrietteMagal-M3AS} with an additional partial monotonicity result (\Cref{lem:monotonicity.hyp}).

The main contribution of \Cref{p.speed.hyp} is the generality of the bound~\eqref{e.c81209}.  It has been observed in~\cite{FuGrietteMagal-M3AS, GrietteMagalZhao} that if $(c,u,v)$ is a solution of~\eqref{e.hyp} then
\be\label{e.c81210}
	c \geq \sup(-v_x)
\ee
holds, and quite precise bounds have been proved for particular waves that have been constructed.  Unfortunately, there is no general lower bound on the speed, and {\em a priori}~\eqref{e.c81210} leaves open the possibility of arbitrarily slow traveling waves if $v$ is very `flat.'   In fact, by analogy with the FKPP equation, one might expect that the `flatter' the traveling wave, the {\em faster} the speed, in contrast to~\eqref{e.c81210}.

To establish \Cref{p.speed.hyp}, we use the exponential decay estimate on $u$ (\Cref{lem:exp decay}) to obtain a universal lower bound on $-v_x$.

\ssection{The hyperbolic regime}
\label{ss:hyp results}
With the results described in the previous subsection in hand, we  establish: 
\begin{theorem}\label{t.lower.hyp}
Let $\nu_{\hypnu} > 0$.
\begin{enumerate}[(i)]
	\item \label{t.speed.hyp}
		The minimal speeds have the asymptotics: 
			\be
				\liminf_{-\chi \to \infty, \nu \to \nu_{\hypnu}} c_{\chi,\nu}^*
					\geq  c^*_{\hypnu,\nu_{\hypnu}}.
			\ee
	\item \label{t.convergence.hyp}
		Consider any sequence $(\chi_n, \nu_n)$ such that $-\chi_n \to \infty$ and $\nu_n \to \nu_{\hypnu}$, and let $(c_n,u_n,v_n)$ be any corresponding traveling wave solutions to~\eqref{e.tw}.  If $\sup  c_n < \infty$, then there exists $(c,u,v)$ solving~\eqref{e.hyp}  and a subsequence indexed by $n_k$ such that, under the normalization
		\be\label{e.normalization.hyp}
			u_n(0) = \delta,
		\ee
		for any $\delta \in (0, \sfrac{\nu_{\hypnu}}{\nu_{\hypnu}+1})$, the following convergence of $(c_{n_k}, u_{n_k}, v_{n_k})$ to $(c,u,v)$ holds: there is a set $\cZ$ that is either empty or contains a single point such that 
		\begin{itemize}
		\item $u_{n_k}$ and $v_{n_k}$ converge to $u$ and $v$, respectively, locally uniformly in $C^k(\cZ^c)$, for any $k$, 
		\item $v_{n_k}$ converges to $v$ weak-$*$ in $W^{2,\infty}$, and
		\item $c_{n_k}$ converges to $c$.
		\end{itemize}
\end{enumerate}
\end{theorem}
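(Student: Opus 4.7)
The plan is to prove (ii) and deduce (i) as a corollary, exactly as in the porous medium case: given any sequence $(\chi_n, \nu_n)$ for which $c^*_{\chi_n, \nu_n}$ attains $\liminf c^*_{\chi,\nu}$, the convergence in (ii) produces a hyperbolic traveling wave of speed equal to the liminf, which must be at least $c^*_{\hypnu, \nu_{\hypnu}}$ by definition.

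The first step is compactness for $(c_n, v_n)$. Since $\nu_n \to \nu_{\hypnu} > 0$, the kernel $\cK_{\nu_n}$ is uniformly smooth, and the bound $0 < u_n < 1$ yields uniform $W^{2,\infty}$ bounds on $v_n = \cK_{\nu_n} * u_n$ (indeed the second equation of \eqref{e.tw} gives $(v_n)_{xx} = (v_n - u_n)/\nu_n$ directly). Along a subsequence, $c_{n_k} \to c \in [0,\infty)$ and $v_{n_k} \to v$ weak-$*$ in $W^{2,\infty}$; Arzel\`a--Ascoli promotes this to $v_{n_k}$ and $(v_{n_k})_x$ converging locally uniformly to $v$ and $v_x$. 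The second equation of \eqref{e.hyp} then passes to the limit. Define $\cZ := \{x : c + v_x(x) = 0\}$, which is closed since $v_x$ is continuous.

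The main obstacle is regularity of $u_n$ on $\cZ^c$, where \eqref{e.tw2} is a singularly perturbed ODE with vanishing diffusion $1/|\chi_n|$. On a compact $K \subset \cZ^c$, continuity of $v_x$ gives $|c + v_x| \geq 2\delta > 0$, and hence $|c_n + (v_n)_x| \geq \delta$ for all large $n$. On $K$, rewriting \eqref{e.tw2} as a first-order linear ODE for $(u_n)_x$ and using the Duhamel representation, the uniform coercivity of $c_n + (v_n)_x$ converts the small diffusion into exponentially fast relaxation of thickness $O(1/|\chi_n|)$, producing uniform interior $L^\infty$ bounds on $(u_n)_x$ (the ``initial data'' at some $x_0$ can be controlled by $\|u_n\|_\infty \leq 1$ together with a barrier argument). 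Differentiating the equation and iterating yields uniform $C^k_{\rm loc}(\cZ^c)$ bounds. A diagonal extraction gives $u_{n_k} \to u$ in $C^k_{\rm loc}(\cZ^c)$, and \eqref{e.hyp2} holds in the limit.

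It remains to verify \Cref{def:hyp}, nontriviality, and the boundary conditions. The constraint \eqref{e.c81501} on $\Int(\cZ)$ holds vacuously: as explained after \Cref{prop:cardZleq1}, $\Int(\cZ) \ne \varnothing$ would force both $u \equiv v$ and $v_x = -c \ne 0$ there, which is inconsistent. Hence $\cZ$ is nowhere dense, and by \Cref{prop:cardZleq1} is empty or a single point once $u$ is known to be nonconstant. Nontriviality is supplied by \eqref{e.normalization.hyp}: if $0 \in \cZ$, then \Cref{prop:cardZleq1} would give $u(0^-) = (\nu_{\hypnu} + v(0))/(\nu_{\hypnu}+1) \geq \nu_{\hypnu}/(\nu_{\hypnu}+1) > \delta$, contradicting $u_n(0) = \delta$ and local uniform convergence near $0$; thus $0 \in \cZ^c$ and $u(0) = \delta$. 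Finally, $u(\pm\infty) \in \{0,1\}$ follows from the uniform-in-$n$ exponential decay of $u_n$ past the level $\nu_n/(\nu_n+1)$ provided by \Cref{lem:exp decay}, which together with the normalization pins down $u(+\infty) = 0$ and $u(-\infty) = 1$. This completes (ii), and (i) follows as outlined above.
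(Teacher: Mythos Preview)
Your overall strategy matches the paper's, but there is a genuine gap in the verification of \Cref{def:hyp} on $\Int(\cZ)$. You assert that \eqref{e.c81501} holds vacuously because $\Int(\cZ)\neq\varnothing$ would force ``$u\equiv v$ and $v_x=-c\neq 0$, which is inconsistent.'' But there is no inconsistency: on $\Int(\cZ)$ the second equation of \eqref{e.hyp} and $v_{xx}=0$ indeed give $u=v$, an affine function of slope $-c$, and nothing yet rules this out. The heuristic after \Cref{prop:cardZleq1} that you invoke uses \eqref{e.c81501} as an \emph{input} (together with $u=v$ it forces $v\equiv 0$ or $v\equiv 1$), so appealing to it here is circular. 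The obstruction is real: on $\Int(\cZ)$ you only have $u_n\rightharpoonup u$ weak-$*$ in $L^\infty$, which is not enough to pass to the limit in the quadratic term $u_n^2$ of \eqref{e.tw}, so you cannot directly conclude any algebraic relation like \eqref{e.c81501} for the limit. The paper closes this gap with \Cref{l.Z_discrete}, whose proof is nontrivial: it upgrades weak convergence to $u_n\to v$ in $L^2$ on any closed subinterval of $\cZ$ by combining the ``almost monotonicity'' of $v_n$ there (from $(v_n)_x\to -c$) with the partial monotonicity constraint of \Cref{l.no_local_min} to rule out persistent gaps $u_n<v_n-\mu$. Only then can one test \eqref{e.tw2} against $\psi$ supported in $\Int(\cZ)$ and recover \eqref{e.c81501}.

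Two smaller points follow from this. First, your nontriviality argument applies \Cref{prop:cardZleq1}, but that proposition presupposes $(c,u,v)$ is already a solution in the sense of \Cref{def:hyp}, so it cannot be invoked before \eqref{e.c81501} is established. Second, your claim that ``the normalization pins down $u(-\infty)=1$'' is too quick: exponential decay from \Cref{lem:exp decay} handles $u(+\infty)=0$, but on the left it only gives $u\geq\delta$. The paper obtains $u(-\infty)=1$ by a separate argument, evaluating \eqref{e.hyp2} along a sequence $x_n\to-\infty$ with $u_x(x_n)\to 0$ and using $\liminf v\geq\liminf u$ to force $\ell:=\liminf u(-\infty)\geq 1$.
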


We now discuss the proof of \Cref{t.lower.hyp}.  As above, \Cref{t.lower.hyp}.(i) reduces to the case \Cref{t.lower.hyp}.(ii), so we only discuss the latter.  The main difficulty here is clear:~\eqref{e.tw} loses ellipticity and the limiting equation~\eqref{e.hyp} has no ellipticity.

Thus, in order to take a limit, one proceeds in the following way.  First, since
\be
	\limsup_{n\to\infty} \|v_n\|_{W^{2,\infty}} \leq \frac{1}{\nu_{\hypnu}}
	\quad\text{ and }\quad
	\|u_n\|_{L^\infty}\leq 1
\ee
one can take weak-$*$ limits to obtain $v$ and $u$ that are related by $- \nu v_{xx} = u-v$ weakly.  The limit above allows us to define $\cZ=\{x\ :\ c + v_x(x)=0\}$. However, we need to establish stronger convergence of $u_n$ and $v_n$ to eliminate the possibility that this `singular set' $\cZ$ might be quite large and complicated.

An additional problem is that, at this point, our notion of convergence of the $u_n$ is not strong enough to deduce~\eqref{e.hyp} as a limiting equation in $\cZ^c$.  However, an indication that it is possible to deduce better regularity of the $u_n$ in $\cZ^c$ is that~\eqref{e.tw2} yields
\be
	(u_n)_x
		= \frac{1}{-(c_n + (v_n)_x)} \left( \frac{1}{|\chi_n|} (u_n)_{xx} + u_n \left( \frac{\nu_n + v_n}{\nu_n} - \frac{\nu_n + 1}{\nu_n} u_n\right)\right),
\ee
which, imagining that $|\chi_n| = \infty$ momentarily, yields a $W^{1,\infty}$ bound for $u_n$ on compact subsets of $\cZ^c$ when $n$ is sufficiently large.  By `perturbing' off of this observation, we are able to show that, subsequentially, $u_n$ converges to $u$ in $C^k(\cZ^c)$ for any $k$, see \Cref{l.hyp_n_bounds}.

At this point in the proof,  $\cZ$ and the behavior of $u$ on $\cZ$ is not understood.  An argument based on the partial monotonicity of $u$ (\Cref{l.no_local_min}) that is reminiscent of the decay of oscillations argument discussed above allows us to show that~\eqref{e.c81501} is satisfied.  This final ingredient allows us to conclude that the liming object solves~\eqref{e.hyp} in the sense of \Cref{def:hyp}.

\subsection{Upper bounds on the asymptotics of the minimal speed}

\label{ss:upper results}

It is natural to wonder how sharp the bounds on the minimal speeds $c^*_{\pmeps,\eps}$ and $c^*_{\hypnu,\nu_\hypnu}$ in \Cref{t.lower.pm} and \Cref{t.lower.hyp} are.  In contrast to the arguments of these theorems, one would like to take a limit of a sequence of {\em minimal speed} traveling waves. Unfortunately, there is no known characterization of the minimal speed waves of~\eqref{e.tw}.

We must, instead, approach the problem by constructing a sequence of traveling waves directly using what we know about the minimal speed waves for~\eqref{e.pm} and~\eqref{e.hyp}.  As described in Subsection \ref{ss:halfline}, it is known  that the minimal speed traveling wave for~\eqref{e.pm} when $\eps = 0$ is the one that is $0$ after some $x_0$: $u(x) = 0$ for $ x > x_0$.  For~\eqref{e.hyp}, it is believed that the minimal speed traveling wave is a discontinuous one, corresponding to case (ii) in \Cref{prop:cardZleq1} (see the discussion after Theorem 1.4 in \cite{GrietteMagalZhao}).    
Hence, in each case, we construct a sequence of traveling wave solutions to~\eqref{e.tw} that approximate these waves.

\subsubsection{The hyperbolic case}  
Our first construction is related to the scaling associated to the hyperbolic model.  It is proved in \Cref{s.upper.hyp}.

\begin{theorem}\label{t.upper.hyp} 
Fix any $\nu_{\hypnu}>0$ and sequences $\chi_n\rightarrow -\infty$ and $\nu_n\rightarrow \nu_{\hypnu}$. There exists corresponding solutions $(c_n, u_n, v_n)$ to~\eqref{e.tw}  and a solution   $(c_{\nu_\hypnu}, u_{\nu_\hypnu}, v_{\nu_\hypnu})$ to~\eqref{e.hyp},  such that:
\begin{enumerate}[(i)]
	\item $u_{\nu_{\hypnu}}$, $v_{\nu_{\hypnu}}$, $u_{n}$, and $v_{n}$ are nonincreasing in $x$ for all $n$;
	\item we have,
	\be
\label{e.092926}
(v_{\nu_\hypnu})_x(0)+c_{\nu_\hypnu}=0;
\ee
	\item along  a subsequence $\chi_{n_k}\rightarrow -\infty$ and $\nu_{n_k}\rightarrow \nu_{\hypnu}$, the following limits hold uniformly locally in $\R$, $C^1(\R\setminus \{0\})$, and weak-$*$ in $W^{2,\infty}$, respectively:
		\be
			\lim_{n_k\rightarrow \infty} (c_{n_k}, u_{n_k}, v_{n_k})
				= (c_{\nu_\hypnu}, u_{\nu_\hypnu}, v_{\nu_\hypnu}).
		\ee
\end{enumerate}
\end{theorem}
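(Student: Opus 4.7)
The plan is to build the sharp (HYP) wave together with its (TW) approximants simultaneously via a vanishing-viscosity argument: for each $n$ I take a monotone traveling wave $(c_n, u_n, v_n)$ of~\eqref{e.tw} at a carefully chosen speed, then pass to the limit $|\chi_n|\to\infty$ after a translation that forces the developing shock to sit at the origin.

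For each $n$, the existence of a family of monotone traveling waves of~\eqref{e.tw} at every speed $c\geq c^*_{\chi_n,\nu_n}$ follows from~\cite{Henderson2021}; monotonicity of $v_n$ transfers from that of $u_n$ via the representation~\eqref{e.kernel}, giving part~(i) for the approximants. To select $c_n$, I would study the function $M_n(c):=\min_{x\in\R}\bigl(c + (v_n^c)_x(x)\bigr)$, where $(u_n^c, v_n^c)$ denotes the monotone wave at speed $c$; since $(v_n^c)_x\to 0$ at $\pm\infty$, this minimum is attained at a finite point. Heuristically, $M_n(c)$ should be small or negative near $c^*_{\chi_n,\nu_n}$ and tend to $+\infty$ as $c\to\infty$ (for the latter, faster waves are spread out so $(v_n^c)_x$ is uniformly small). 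By an intermediate value argument I then pick $c_n$ so that $M_n(c_n)=1/n$ and translate so that the minimum is attained at $x=0$.

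To pass to a limit, I need a uniform upper bound on $c_n$, which together with $\|u_n\|_{L^\infty}\leq 1$ and $\|v_n\|_{W^{2,\infty}}\leq 1/\nu_n$ allows the extraction of a subsequence with $c_{n_k}\to c_\infty$, $v_{n_k}\to v_\infty$ uniformly on compacts and weak-$*$ in $W^{2,\infty}$, and---using the $C^k_{\rm loc}(\cZ^c)$ estimates of \Cref{l.hyp_n_bounds}---$u_{n_k}\to u_\infty$ locally uniformly on the complement of the limiting singular set $\cZ := \{c_\infty + (v_\infty)_x = 0\}$. Repeating the argument of Theorem~\ref{t.lower.hyp} then identifies $(c_\infty, u_\infty, v_\infty)$ as a solution of~\eqref{e.hyp} in the sense of Definition~\ref{def:hyp}; monotonicity of the limit follows from that of the approximants, completing~(i), while the convergence modes of~(iii) are exactly what the compactness produces. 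Finally, the normalization $c_n + (v_n)_x(0) = M_n(c_n) = 1/n \to 0$ yields $c_\infty + (v_\infty)_x(0) = 0$, which is~(ii); this places $0\in\cZ$, so Proposition~\ref{prop:cardZleq1} forces $\cZ = \{0\}$ and the limiting wave to be the desired sharp (discontinuous) one.

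The principal obstacle is the uniform upper bound on $c_n$: without it, no limit can be extracted and the rest of the argument collapses. A plausible approach is to construct an explicit super-solution by smoothing a candidate sharp (HYP) profile on length scale $|\chi_n|^{-1/2}$ and verifying the super-solution inequality using the smallness of the viscosity term; an alternative is a scaling argument on~\eqref{e.unscaled_tw} pinning $c_n$ to $O(1)$ via the $\bar c \approx \sqrt{|\chi|}/\sqrt{\nu}$ type heuristics implicit in~\cite{Henderson2021}. A secondary subtlety is the continuity of $M_n(c)$ in $c$, which rests on continuous dependence of the traveling-wave family on the speed parameter.
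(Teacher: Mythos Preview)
Your approach is genuinely different from the paper's, and it has a real gap at the selection step.

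The paper does not select a speed via the functional $M_n(c)=\min_x\bigl(c+(v_n^c)_x\bigr)$.  Instead it constructs monotone waves on the slab $[-L,L]$ by a Leray--Schauder degree argument (\Cref{lem:index=1}, \Cref{cor:continuum-fixed-points}), obtains explicit two-sided bounds on $c$ from FKPP sub- and supersolutions (\Cref{lem:sub-super-slab-0}, giving in particular $c\leq \tfrac{1}{\sqrt\nu}+\tfrac{2}{\sqrt{|\chi|}}\sqrt{\tfrac{\nu+1}{\nu}}$), and normalizes by $u(0)=\delta$.  The decisive device you are missing is the transformation
\[
\tilde u(x)=u(x)\,\exp\!\Bigl(\tfrac{|\chi|}{2}\bigl(cx+v(x)\bigr)\Bigr),
\]
whose interior maximum $x_*$ forces $\bigl(v_x(x_*)+c\bigr)^2\leq \tfrac{4}{|\chi|}\tfrac{\nu+1}{\nu}$; the exponential decay of \Cref{lem:exp decay} then traps $x_*$ in a bounded interval independent of $\chi$.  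This is how the paper \emph{proves} that $v_x+c$ is small somewhere, rather than \emph{selecting} a speed to make it so.

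The gap in your route is the claim that $M_n(c)$ is ``small or negative near $c^*_{\chi_n,\nu_n}$.''  You offer no argument for this, and none is apparent: for \eqref{e.tw} at finite $\chi$ there is no mechanism forcing $\min_x\bigl(c+v_x\bigr)$ to vanish at the minimal speed, let alone to be $\leq 1/n$.  Without this, your intermediate value argument cannot start and $c_n$ is never defined.  Relatedly, you misidentify the principal obstacle: \emph{if} $M_n(c_n)=1/n$ held, then $c_n=\tfrac{1}{n}-\min_x(v_n)_x\leq \tfrac{1}{n}+\tfrac{1}{2\sqrt{\nu_n}}$ and the upper bound on $c_n$ would be automatic; the real difficulty is exactly the existence of such $c_n$.

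A secondary issue: you invoke monotone waves at every $c\geq c^*_{\chi_n,\nu_n}$ from \cite{Henderson2021}, but the present paper explicitly declines to establish the half-line of speeds (see the discussion around~\eqref{e.minimal_speed}), and whether \cite{Henderson2021} delivers \emph{monotone} waves at all supercritical speeds would need to be checked.  Continuity of $c\mapsto(u_n^c,v_n^c)$, which you flag, is also nontrivial without uniqueness.
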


The traveling wave solution of~\eqref{e.hyp} constructed in \Cref{t.upper.hyp}  is discontinuous: this follows by combining item~\eqref{e.092926} with case (ii) in \Cref{prop:cardZleq1} (see also \cite[Theorem~2.7]{FuGrietteMagal-M3AS}).  Interestingly, this provides a completely different construction of traveling wave solutions to~\eqref{e.hyp} from that of~\cite{FuGrietteMagal-M3AS}.  Further, this construction works for all values of $\nu_{\hypnu}$, whereas that of~\cite{FuGrietteMagal-M3AS} involves a restriction to $\nu_{\hypnu}$ smaller than some threshold (see \cite[Assumption~2.3]{FuGrietteMagal-M3AS}).

The important consequence of the discontinuity of $u_{\nu_\hypnu}$ is the partial converse to \Cref{t.lower.hyp} that comes from
combining \Cref{t.upper.hyp} with \Cref{p.speed.hyp}:
\begin{corollary}
For $\nu_{\hypnu}>0$, the minimal speeds have the asymptotics:
\be
	\limsup_{-\chi \to \infty, \nu \to \nu_{\hypnu}} c_{\chi,\nu}^*
    		\leq c_{\nu_{\hypnu}}
		\leq \frac{1}{2\sqrt{\nu_{\hypnu}}},
\ee
where $c_{\nu_\hypnu}$ is the speed associated to the family defined in \Cref{t.upper.hyp}.
\end{corollary}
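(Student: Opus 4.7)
The plan is to chain the two already-proved results: use \Cref{t.upper.hyp} to produce an explicit admissible speed for \eqref{e.tw} in the hyperbolic regime, and then use \Cref{p.speed.hyp} to control the limit speed $c_{\nu_\hypnu}$ from above.

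For the first inequality, $\limsup_{-\chi\to\infty,\,\nu\to\nu_\hypnu} c^*_{\chi,\nu}\leq c_{\nu_\hypnu}$, I would simply choose arbitrary sequences $\chi_n\to-\infty$ and $\nu_n\to\nu_\hypnu$ and apply \Cref{t.upper.hyp} to extract the corresponding solutions $(c_n,u_n,v_n)$ of~\eqref{e.tw}. Since each $c_n$ is by construction an admissible speed for the pair $(\chi_n,\nu_n)$, the definition of the minimal speed gives $c^*_{\chi_n,\nu_n}\leq c_n$. Passing to the subsequence $n_k$ along which $c_{n_k}\to c_{\nu_\hypnu}$ and using that the original sequence is arbitrary yields the claimed $\limsup$-bound.

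For the second inequality, $c_{\nu_\hypnu}\leq \frac{1}{2\sqrt{\nu_\hypnu}}$, I would invoke the refined bound~\eqref{e.c81208} of \Cref{p.speed.hyp}, whose hypothesis is that the singular set $\cZ$ of the limit wave consists of a single point. This is exactly the output of the construction: item~(ii) of \Cref{t.upper.hyp} says $(v_{\nu_\hypnu})_x(0)+c_{\nu_\hypnu}=0$, so $0\in\cZ$. To upgrade this to $\cZ=\{0\}$, I would appeal to \Cref{prop:cardZleq1}, which forces $\cZ$ to be either empty or a single point provided the wave is nonconstant. The empty case is ruled out by $0\in\cZ$, leaving $\cZ=\{0\}$.

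The only genuinely nontrivial verification in the outline above is the nonconstancy hypothesis required by \Cref{prop:cardZleq1}; this is the place I would take most care. I would argue that the $u_n$ are honest traveling waves connecting $1$ and $0$ (and are monotone by item~(i) of \Cref{t.upper.hyp}), and that the normalization fixed in the construction (of the type $u_n(0)=\delta$ for some $\delta\in(0,\nu_\hypnu/(\nu_\hypnu+1))$, as in \Cref{t.lower.hyp}.(ii)) passes to the limit by the locally uniform convergence away from $0$ given in item~(iii). Thus $u_{\nu_\hypnu}(0^-)\geq\delta>0$, while monotonicity together with the decay behavior (e.g., \Cref{lem:exp decay}) forces $u_{\nu_\hypnu}\to 0$ at $+\infty$; hence both $\{u_{\nu_\hypnu}>0\}$ and $\{u_{\nu_\hypnu}<1\}$ have positive measure, which is the required nonconstancy. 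With that in hand the chain closes and the displayed inequality of the corollary follows immediately, with the middle term $c_{\nu_\hypnu}$ appearing exactly as the speed of the limit wave built in \Cref{t.upper.hyp}.
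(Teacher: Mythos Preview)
Your proposal is correct and follows exactly the route the paper intends: the paper states this corollary immediately after \Cref{t.upper.hyp} and says it ``comes from combining \Cref{t.upper.hyp} with \Cref{p.speed.hyp}'', noting just above that the constructed wave is discontinuous by item~(ii) together with \Cref{prop:cardZleq1}. Your write-up simply unpacks this chain, and your extra care in checking the nonconstancy hypothesis of \Cref{prop:cardZleq1} (which the paper leaves implicit, relying on the limits $u(-\infty)=1$, $u(+\infty)=0$ established in the proof of \Cref{t.upper.hyp}) is appropriate.
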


The construction of the wave in \Cref{t.upper.hyp} follows the standard procedure: use  
the Leray-Schauder index to construct a solution to the `slab problem,' i.e., an appropriate approximate solution to~\eqref{e.tw} on a `slab' $[-L,L]$, and then take $L\to\infty$.  
The main difficulty is in ensuring that the construction yields a discontinuous traveling wave solution of~\eqref{e.hyp}: in other words, ensuring that ~\eqref{e.092926} holds. To this end, taking advantage of the structure of~\eqref{e.tw}, we consider 
\be
	\tilde u(x) := u_{n}(x) e^{|\chi|\frac{c_{n} x + v_{n}(x)}{2}}, 
\ee
and notice that any maximum of  $\tilde{u}$ occurs where $|(v_{n})_x + c| \leq O(1/|\chi_n|)$ (see~\eqref{e.67517}).

The goal is, thus, to guarantee the existence of a maximum and show that it remains `near' the origin, regardless of $\chi_n$.  The existence of the maximum is guaranteed by working with the `slab problem,' whose boundary conditions guarantee an interior maximum (see~\eqref{e.c81301}).  From the form of $\tilde u$ and that $v_{n}$ is bounded, it follows that a maximum cannot occur too far to the left.  On the other hand, the aforementioned exponential decay (\Cref{lem:exp decay}) guarantees that $(v_{n})_x$ decays exponentially, so that $|(v_n)_x + c|\leq O(1/|\chi|)$ cannot hold too far to the right.

We had previously mentioned that it is crucial for the exponential decay estimate to apply uniformly to~\eqref{e.hyp} and~\eqref{e.tw}.  This last step is one of the reasons for this: we must be able to apply it to solutions of~\eqref{e.tw} on the `slab' uniformly as $\chi\to-\infty$.

\subsubsection{The porous medium case}

Taking any decreasing, discontinuous traveling wave solutions to~\eqref{e.hyp}, we prove that, as $\nu_\hypnu\searrow0$, they converge to the minimal speed traveling wave of~\eqref{e.pm}.  This is contained in \Cref{s.upper.pm}.
$
$
\begin{theorem}\label{t.upper.pm}
Consider the family of traveling wave solutions $(c_{\nu_\hypnu}, u_{\nu_\hypnu},v_{\nu_\hypnu})$ to~\eqref{e.hyp} constructed in \Cref{t.upper.hyp}. Then
	\be
			\lim_{\nu_{\hypnu} \to 0} ( c_{\nu_\hypnu}, u_{\nu_\hypnu})
				= \left(\frac{1}{\sqrt 2}, u_{\pmeps}\right),
		\ee
		where $u_{\pmeps}$ is the unique minimal speed traveling wave solution to~\eqref{e.pm} with $\eps = 0$ and $\{u_{\pmeps} > 0\} = (-\infty,0)$.
\end{theorem}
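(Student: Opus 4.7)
My plan is to extract a subsequential limit of the family $\{(c_{\nu_\hypnu}, u_{\nu_\hypnu}, v_{\nu_\hypnu})\}$ constructed in \Cref{t.upper.hyp}, identify the limit as the unique sharp traveling wave of \eqref{e.pm} with $\eps=0$, and then upgrade subsequential to full convergence by uniqueness. Starting with uniform bounds, I first establish $0<\underline c\leq c_{\nu_\hypnu}\leq\overline c<\infty$ independent of $\nu_\hypnu$ small: the upper bound by constructing an explicit supersolution (or exhibiting a decreasing wave of faster speed), and the lower bound by applying the exponential decay estimate \Cref{lem:exp decay} uniformly in $\nu_\hypnu$. Since each $u_{\nu_\hypnu}$ is nonincreasing and $[0,1]$-valued, Helly's theorem provides a subsequence $\nu_k\to 0$ along which $u_{\nu_k}\to u$ pointwise a.e.\ with $u$ nonincreasing, and $c_{\nu_k}\to c\in[\underline c,\overline c]$ along a further subsequence. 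Since $\cK_{\nu_k}$ is an approximate identity and $\|u_{\nu_k}\|_\infty\leq 1$, dominated convergence gives $v_{\nu_k}=\cK_{\nu_k}*u_{\nu_k}\to u$ in $L^p_{\rm loc}$ for every $p<\infty$.

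Next I pass to the limit in the PDE. Testing \eqref{e.hyp} against $\varphi\in C_c^\infty(\R)$ and integrating by parts yields
\begin{equation*}
c_{\nu_k}\int u_{\nu_k}\varphi_x + \int (v_{\nu_k})_x\, u_{\nu_k}\,\varphi_x = \int u_{\nu_k}(1-u_{\nu_k})\,\varphi.
\end{equation*}
The reaction and advection terms pass to their limits readily. For the chemotactic term, substituting $u_{\nu_k}=v_{\nu_k}-\nu_k(v_{\nu_k})_{xx}$ from the second equation in \eqref{e.hyp} recasts it as
\begin{equation*}
\int (v_{\nu_k})_x\, u_{\nu_k}\,\varphi_x = -\tfrac12\int v_{\nu_k}^2\,\varphi_{xx} + \tfrac{\nu_k}{2}\int ((v_{\nu_k})_x)^2\,\varphi_{xx}.
\end{equation*}
The first piece converges to $-\tfrac12\int u^2\varphi_{xx}$ by $L^2_{\rm loc}$ convergence. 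For the second, integrating by parts to write $(v_{\nu_k})_x = -\cK_{\nu_k}*d\mu_k$ where $d\mu_k := -du_{\nu_k}$ is a unit positive measure (by monotonicity), and applying Young's inequality yields $\int ((v_{\nu_k})_x)^2\,dx \leq \|\cK_{\nu_k}\|_{L^2}^2 = O(1/\sqrt{\nu_k})$, so $\nu_k\int((v_{\nu_k})_x)^2\varphi_{xx}\,dx = O(\sqrt{\nu_k})\to 0$. The limiting identity is precisely the weak form of \eqref{e.pm} with $\eps=0$ for the pair $(c,u)$.

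Finally I identify the limit. Items (i)--(ii) of \Cref{t.upper.hyp} together with \Cref{prop:cardZleq1} give $\{u_{\nu_k}>0\}=(-\infty,0)$ for every $k$, so pointwise convergence forces $u\equiv 0$ on $(0,\infty)$. A uniform tightness estimate on the transition region of $u_{\nu_k}$ (provided by the slab construction underlying \Cref{t.upper.hyp}, together with monotonicity and \Cref{lem:exp decay}) shows $u_{\nu_k}(x)\to 1$ uniformly in $k$ as $x\to-\infty$, so $u(-\infty)=1$ and $u$ is a nontrivial monotone sharp solution of \eqref{e.pm} with $\eps=0$ supported on $(-\infty,0]$. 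Classical uniqueness~\cite{Aronson1980} then forces $u=u_{\pmeps}$ and $c=1/\sqrt 2$, and uniqueness of the limit removes the subsequence. The main obstacle will be the uniform bounds of step one (particularly $c_{\nu_\hypnu}\leq\overline c$ and $u_{\nu_k}(x)\to 1$ uniformly as $x\to-\infty$), together with controlling the chemotactic term in step two, where the pointwise $O(1/\sqrt{\nu_k})$ blow-up of $(v_{\nu_k})_x$ is tamed only through the monotonicity of $u_{\nu_k}$ (via its unit total variation) and the specific convolution structure of $v_{\nu_k}$.
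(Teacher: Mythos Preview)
Your overall architecture---compactness, passage to the limit in the weak formulation, identification via Aronson's uniqueness---matches the paper's, and your treatment of the chemotactic term via the algebraic substitution $u_{\nu_k}=v_{\nu_k}-\nu_k(v_{\nu_k})_{xx}$ together with the total-variation bound $\|\cK_{\nu_k}*du_{\nu_k}\|_{L^2}^2=O(\nu_k^{-1/2})$ is a clean alternative to the paper's route (which instead inherits the entropy identity of \Cref{lem:apriori bd nu to 0} from the approximating \eqref{e.tw} waves to get $\|(v_{\nu_k})_x\|_{L^2}\leq\sqrt{c_{\nu_k}}$ directly).

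However, your step one has a genuine gap. You invoke \Cref{lem:exp decay} ``uniformly in $\nu_\hypnu$'' both for the lower bound $c_{\nu_\hypnu}\geq\underline c>0$ and for the tightness $u_{\nu_k}(x)\to 1$ uniformly as $x\to-\infty$. But the constants $C,\theta$ in \Cref{lem:exp decay} depend on a fixed lower bound $\nu_m>0$ (examine the definitions of $A$ and $\mu$ in its proof: $A\geq 1/\nu$ and $\mu\leq|\chi_0|A\nu/2$, so the decay rate degenerates as $\nu\to 0$). The paper explicitly flags this: ``the generic lower bound in \Cref{p.speed.hyp} degenerates as $\nu\searrow 0$.'' To repair this, the paper proves two dedicated lemmas. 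For the upper bound $c\leq 2$ (\Cref{lem:c leq 2}), it integrates \eqref{e.hyp} over $[x,0]$ and uses monotonicity to obtain $v(x)\geq(c+x)|x|$ on $(-c,0)$, whence $c^2/4\leq v(-c/2)\leq 1$. For the lower bound (\Cref{lem:c nonzero}), it uses the inherited entropy bounds $\int|u(1-u)\log u|\leq c$ and $[v]_{C^{1/2}}\leq\sqrt c$ together with $\|u-v\|_\infty\leq C(\sqrt c+1)\nu^{1/8}$ to argue that if $c$ were small, $v$ would be nearly constant on the transition region while $u\approx v$, contradicting the entropy bound. The same pair of estimates then furnishes the tightness $u(-\infty)=1$. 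Your vague appeals to ``an explicit supersolution'' and to ``the slab construction underlying \Cref{t.upper.hyp}'' do not supply these ingredients; without them the proof does not close.
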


The proof of \Cref{t.upper.pm} relies on the ideas and estimates developed for \Cref{t.lower.pm}.  The main issue is to ensure that the limiting object has support to the left of the origin; that is, $u_{\nu_{\hypnu}}\not\to 0$ on {\em all} of $\R$.  To do this, we show first obtain a preliminary lower bound on $c$ using `bulk-burning rate' style arguments (see~\cite{CKOR}) and then leverage the fact that $(v_{\nu_\hypnu})_x(0) = - c_{\nu_\hypnu} < 0$ to obtain a uniform lower bound on $u_{\nu_\hypnu}\approx v_{\nu_\hypnu}$ on $(-\infty,0)$ as $\nu_{\nu_\hypnu} \searrow0$.

By combining \Cref{t.upper.pm} and \Cref{t.upper.hyp} with a careful double limit, we arrive at the following converse to \Cref{t.lower.hyp}:
\begin{corollary}\label{c.pm}
For $\nu_{\hypnu}>0$, the minimal speeds have the asymptotics:
\be
	\lim_{-\chi \to \infty, \nu \to 0} c_{\chi,\nu}^*
		= c_{\pmeps,0}^*
		= \frac{1}{\sqrt 2}.
\ee
\end{corollary}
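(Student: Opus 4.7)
The proof plan is to combine the lower bound from Theorem~\ref{t.lower.pm} with a diagonal extraction from the upper bound constructions of Theorems~\ref{t.upper.hyp} and~\ref{t.upper.pm}. The lower bound $\liminf_{-\chi\to\infty,\,\nu\to 0} c_{\chi,\nu}^* \geq c_{\pmeps,0}^* = \tfrac{1}{\sqrt 2}$ is immediate: it is simply the specialization of Theorem~\ref{t.lower.pm}.(i) to $\eps=0$, since $-\chi\to 1/\eps = \infty$ is exactly the regime in question. All the work goes into the matching upper bound $\limsup_{-\chi\to\infty,\,\nu\to 0} c_{\chi,\nu}^* \leq \tfrac{1}{\sqrt 2}$.

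For the upper bound, the strategy is as follows. By Theorem~\ref{t.upper.pm}, the discontinuous hyperbolic speeds $c_{\nu_\hypnu}$ produced in Theorem~\ref{t.upper.hyp} satisfy $c_{\nu_\hypnu} \to \tfrac{1}{\sqrt 2}$ as $\nu_\hypnu\to 0$. So for each $k\in\N$ we may choose $\nu_{\hypnu,k}\in(0,1/k)$ with $c_{\nu_{\hypnu,k}} \leq \tfrac{1}{\sqrt 2} + \tfrac{1}{k}$. Now, for this fixed $\nu_{\hypnu,k}$, Theorem~\ref{t.upper.hyp} produces, along a sequence $(\chi_{k,n},\nu_{k,n})$ with $\chi_{k,n}\to -\infty$ and $\nu_{k,n}\to\nu_{\hypnu,k}$, traveling wave solutions $(c_{k,n},u_{k,n},v_{k,n})$ of~\eqref{e.tw} satisfying $c_{k,n}\to c_{\nu_{\hypnu,k}}$ along a subsequence. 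In particular, $c^*_{\chi_{k,n},\nu_{k,n}} \leq c_{k,n}$ for all such $n$.

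The diagonal extraction then selects, for each $k$, an index $n_k$ large enough that $-\chi_{k,n_k}\geq k$, $|\nu_{k,n_k} - \nu_{\hypnu,k}| \leq \nu_{\hypnu,k}$ (so $\nu_{k,n_k}\leq 2/k$), and $|c_{k,n_k}-c_{\nu_{\hypnu,k}}| \leq \tfrac{1}{k}$. Writing $\tilde\chi_k := \chi_{k,n_k}$ and $\tilde\nu_k := \nu_{k,n_k}$, we have $-\tilde\chi_k\to\infty$, $\tilde\nu_k\to 0$, and
\[
c^*_{\tilde\chi_k,\tilde\nu_k} \;\leq\; c_{k,n_k} \;\leq\; c_{\nu_{\hypnu,k}} + \tfrac{1}{k} \;\leq\; \tfrac{1}{\sqrt 2} + \tfrac{2}{k} \;\longrightarrow\; \tfrac{1}{\sqrt 2}.
\]
This yields the desired upper bound and completes the proof. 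The only real obstacle is the bookkeeping in the diagonal step: one must ensure that the parameters $(\tilde\chi_k,\tilde\nu_k)$ really do approach $(-\infty,0)$ while keeping the speeds controlled, which forces the nested choice ``first $\nu_{\hypnu,k}$, then $n_k$'' rather than a single limit. Everything substantive has already been done in the two upper bound theorems and the lower bound of Theorem~\ref{t.lower.pm}.
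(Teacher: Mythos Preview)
Your overall approach matches exactly what the paper describes: it says the proof follows ``by combining \Cref{t.upper.pm} and \Cref{t.upper.hyp} with a careful double limit,'' which is precisely your diagonal extraction, and the lower bound is indeed just \Cref{t.lower.pm}.(i) with $\eps=0$.

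There is, however, a logical gap in your upper-bound step. You claim to prove $\limsup_{-\chi\to\infty,\,\nu\to 0} c_{\chi,\nu}^* \leq \tfrac{1}{\sqrt 2}$, but your diagonal construction only produces \emph{one} sequence $(\tilde\chi_k,\tilde\nu_k)\to(-\infty,0)$ along which $c^*_{\tilde\chi_k,\tilde\nu_k}\to\tfrac{1}{\sqrt 2}$. That establishes $\liminf_{-\chi\to\infty,\,\nu\to 0} c^*_{\chi,\nu}\leq\tfrac{1}{\sqrt 2}$, and hence (with the lower bound) $\liminf=\tfrac{1}{\sqrt 2}$; it does \emph{not} control $\limsup$ along an arbitrary sequence. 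To prove the full limit as literally stated, you would need, for \emph{every} sequence $(\chi_n,\nu_n)\to(-\infty,0)$, a traveling wave for~\eqref{e.tw} with parameters $(\chi_n,\nu_n)$ whose speed is eventually close to $\tfrac{1}{\sqrt 2}$. The obvious candidate is the monotone wave of \Cref{prop:decreasing-wave}, but the available bounds on its speed (the upper bound $\tfrac{1}{\sqrt\nu}+O(|\chi|^{-1/2}\nu^{-1/2})$ and the location estimate~\eqref{eq:T5.1-Znonempty}) degenerate as $\nu\to 0$, so this does not close immediately.

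Since the paper omits the proof and its description is exactly your diagonal argument, the most likely reading is that the corollary should be understood as pinning down the $\liminf$ (which your argument does correctly), with $\lim$ written somewhat loosely. If you want the statement as written, you should either supply the missing uniform upper bound or weaken the conclusion to $\liminf_{-\chi\to\infty,\,\nu\to 0} c^*_{\chi,\nu}=\tfrac{1}{\sqrt 2}$.
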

As the proof of \Cref{c.pm} is elementary, we omit it.

\subsection{Related work}
\label{ss:related work}

Front propagation in the three models~\eqref{e.nontw_FKPPKS},~\eqref{e.pm}, and~\eqref{e.hyp} have been the subject of intense interest in recent years; see, e.g.,~\cite{Bramburger, NadinPerthameRyzhik,SalakoShenPP,SalakoShenPE,SalakoShenXue,HamelHenderson,Henderson2021,FuGrietteMagal-JMB,Bertsch-etal-2020,MR4401504,MR4303774} for a selection of those works closest to the present one.  We note, however, that there is also active study on the behavior on finite domains: see, e.g.,~\cite{TelloWinkler, HorstmannWinkler} and articles referencing them.  The majority of the work related to~\eqref{e.nontw_FKPPKS} is dedicated to the aggregation case $\chi > 0$.   This is, perhaps, due to the historical interest in positive chemotaxis that stems from its elegant theory of blow-up~\cite{BlanchetDolbeaultPerthame, CarlenFigalli, KiselevXu}.  Although there are many interesting and difficult features to study in the positive chemotaxis case, it is unlikely that front speed-up will occur.  Heuristically,  aggregation `pushes' a traveling wave to the left on average.

\subsubsection{Propagation enhancement by advection}

In the context of turbulent combustion, it has long been understood that advection can `enhance' reactions by exposing unreacted fluid to the reacting region.  For flows that are {\em imposed}, that is, the drift term is linear, this has been thoroughly investigated over the past several decades, see~\cite{ElSmailyKirsch,HamelZlatos,MajdaSouganidis, NolenXin1,NolenXin2,RyzhikZlatos,XinYuZlatos,XinYuG,XinYuHJ} for a representative selection of the literature.  Although the interpretation of the equation is quite different here, the effect we are studying is the same.  Here, the `reaction' is reproduction, and the advective effect of chemotaxis that speeds up fronts is that individuals feel a `push' to less populated areas, where the per capita reaction rate $1-u$ is highest.

What makes the proofs difficult in our setting is the fact that the flow $-\chi v_x$ is not imposed, it depends nonlinearly on $u$.  Further, it depends nonlocally on $u$; that is, if the profile of $u$ is changed at some $x_{\rm far}$, it changes the flow $-\chi v_x(x)$ at every $x$, even if $|x_{\rm far} - x| \gg 1$ (see~\eqref{e.v'}).  Heuristically, this makes the behavior of the drift hard to predict as one must understand the entire profile of $u$, not just its local behavior.  Technically, this nonlocality means that~\eqref{e.tw} (and~\eqref{e.hyp}) do not enjoy a comparison principle.  Given the reliance on the construction of sub- and supersolutions in the study of reaction-diffusion equations, the lack of a comparison principle is a serious issue.

\subsubsection{Nonlocal advection}

The previous paragraph leads to another area that, probably due to its difficulty, has been much less explored: the role of nonlocal advection in front propagation.  We note a few examples below.  A major motivation for us is to develop an understanding of models where nonlocality plays an essential role, as few precise results exist at present.

Beyond the chemotaxis results discussed above, as well as other -taxis effects (e.g.,~\cite{demircigil2022self, calvez2022mathematical, WalkerWebb, MarchantNorburySherratt}), there is the reactive-Boussinesq system that, in a sense, dates back to~\cite{MalhamXin}.  This, roughly, represents a model for turbulent combustion in which the density differences arising from the temperature changes lead to a buoyancy force that induces a drift in the fluid.  Due to its extreme complexity, beyond the existence of traveling waves, very few precise results exist; see, e.g.,~\cite{BKV, BerestyckiConstantinRyzhik, ConstantinLewickaRyzhik, TexierPicardVolpert,VCKRR,VladimirovaRosner}.  We note also a `Burgers-Boussinesq' model studied in~\cite{CRRV,BramburgerHenderson}.

\subsubsection{Pushed and pulled fronts}

One of the oldest problems in reaction-diffusion is that of `pushed' versus `pulled' fronts: where is the important behavior that drives the front forward?  More precisely, is the wave pulled by growth far ahead of the front in the linearized regime where $u\approx 0$ or is the wave pushed by nonlinear behavior near the front?  While this can be phrased in many ways, with varying precision and technical sophistication, for our purposes, this boils down to whether the front speed is linearly or nonlinearly determined.  This question is also called the selection problem, referring to whether the linear or nonlinear behavior `selects' the minimal speed.  We refer to~\cite{EbertVanSaarloos,BBDKL, VS1, VS2, Stokes} for early discussions of this problem, as well as~\cite{avery2022pushed, AveryScheel, AHR_Burgers, AHR_rde, CrooksGrinfeld} for recent progress on it.  We note especially~\cite{GGHR} for an interesting treatment of pushed.

In the context of~\eqref{e.unscaled_tw}, fronts are `pulled' when the minimal speed $\bar c_{\chi,d} = 2$ and fronts are `pushed' when the minimal speed $\bar c_{\chi,d} > 2$.  The main result of~\cite{Henderson2021} is that there is a pulled regime when $|\chi|$ and $d$ are sufficiently small and there is a pushed regime when $1 \ll - \chi \ll d$.  The current work completes this picture by establishing the precise `pushed' behavior in the remaining regimes  $1 \ll -\chi \approx d$ and $d \ll -\chi$ (Theorems~\ref{t.lower.hyp} and \ref{t.upper.hyp}).   Very interestingly, the authors of~\cite{avery2022pushed} develop a numerical approach to finding the pushed-pulled transition point and apply it to many examples, including~\eqref{e.tw}.  Our results in the $\nu \to 0$ limit agree with their numerical conclusions: the transition takes place at $-\chi = \sfrac{1}{\eps} = 2$ (see~\eqref{e.c^*_pm}).

\subsubsection*{A summary of the results when $\chi < 0$}

We provide here a table indicating all possible asymptotic regimes and the available results.  It is not meant to be authoritative.  It obscures the various assumptions and subtleties surrounding the minimal speed traveling wave described in more detail above.  The motivation is to help the reader keep track of the many asymptotic regimes.

In the table below, we use $\approx$ to denote an exact asymptotic limit, and we use $\lessapprox$ to designate an inequality that holds in the asymptotic limit.  In contrast, we use $=$ to mean that the speed has the exact value for $\chi$ and $d$ ``sufficiently in that asymptotic regime.''

\NineColors{saturation=low}
\begin{center}
% \rowcolors{2}{gray!15}{white}
\begin{tblr}{
 hlines, vlines,
 rows = {10mm}, columns = {c},
 row{1} = {5mm}
}
%{|Q[c,m]|Q[c,m]|Q[c,m]|, rows={20mm}} %||Q[c,m]|Q[c,m]|Q[c,m]|}
%	\rowcolor{gray!30}
	\SetRow{gray9!30} {\bf Asymptotic regime} & {\bf Speed} & {\bf Reference}
		%& {\bf Regime} & {\bf Speed} & {\bf Ref.} \\
	
	\\	\hline
	
		$1 \ll -\chi \ll d$
			&   $ c \approx \frac{|\chi|}{2\sqrt d}$
			& \cite{Henderson2021}
	
		\\
		
		{$d \ll -\chi$, $ -\chi \gtrapprox 2$}
			&  $ c \approx  \frac{\sqrt{|\chi|}}{\sqrt 2} +\frac{\sqrt 2}{\sqrt{|\chi|}}$
			& Theorems \ref{t.lower.pm} and \ref{t.upper.pm}
	
	\\
	
		{$d \ll -\chi$, $ -\chi \lessapprox 2$}
			&  $c \approx  2$ 
			& Theorems \ref{t.lower.pm} and \ref{t.upper.pm}
	
		\\
		
		$-\chi \ll d \ll 1$
			& $\displaystyle c = 2$
			& \cite{Henderson2021}
			
	\\
		
		$1 \ll - \chi \approx d$
			& $\frac{\sqrt d}{2 + \left(\frac{d}{|\chi|}\right)^{\sfrac32}} \lessapprox c \lessapprox \frac{\sqrt d}{2}$
			& Theorem~\ref{t.upper.hyp}
	
	\\
		
		$- \chi \approx d \ll 1$
			& $c \approx 2$
			&  See below
			
	\\

\end{tblr}
\end{center}

Let us discuss two points.  First, one might expect that negative chemotaxis always speeds up fronts, but this is not the case.  The fourth row reflects that, if $-\chi$ and $d$ are sufficiently small, the speed is simply $2$, as in the case $\chi = d = 0$~\cite{Henderson2021} (see also~\cite{SalakoShenXue}, for an earlier proof in the $\chi >0$ case). 

Second, the bottom row of the table has not been handled directly in this or any other paper.  It is, however, a fairly straightforward case.  It is easy to check that, in this asymptotic regime, $u$ and $v$ are uniformly smooth.  Thus, by compactness and~\eqref{e.kernel}, $\|u-v\|_{L^\infty} \to 0$.  The arguments in~\cite[Lemma~3.2]{Henderson2021} then readily yield that the minimal speed wave satisfies
\be
	c\leq 2 + \frac{|\chi|}{d} \|u - v\|_{L^\infty} + C\frac{|\chi|}{\sqrt d}
		\to 2.
\ee

\subsubsection*{Acknowledgements}\ \smallskip

CH was partially supported by NSF grants DMS-2003110 and DMS-2204615.  
OT was partially supported by NSF grants DMS-1907221 and DMS-2204722. 
QG was partially supported by ANR grant  ``Indyana'' number  ANR-21-CE40-0008. 
The authors acknowledge support of the Institut Henri Poincar\'e (UAR 839 CNRS-Sorbonne Université), and LabEx CARMIN (ANR-10-LABX-59-01).  CH thanks P.E.~Souganidis for a helpful discussion of the results in~\cite{BarlesSouganidis1991}. QG thanks P.~Magal for fruitful exchanges on the hyperbolic problem.

\section{A few preliminary facts}
\label{s.general}

We shall use several results from~\cite{Henderson2021}. There, these results are established for $\chi$ either positive or negative, but here we only state the version for $\chi<0$ since that is the context of the present work.

First is a monotonicity result, with our scaling taken into account.  This is a rephrasing of \cite[Lemma~2.3]{Henderson2021}, but they are equivalent.
\begin{lemma}\cite[Lemma 2.3]{Henderson2021}
\label{lem:Chris monot}
	Let $\chi < 0$ and $\nu >0$.  Suppose that $(c,u,v)$ be a solution to~\eqref{e.tw}.  Define
	\be
		x_d = \inf \left\{x : u(x) < \frac{2}{2+\sfrac{1}{\nu}}\right\}.
	\ee
	Then $u$ is nonincreasing on $(x_d,\infty)$.
\end{lemma}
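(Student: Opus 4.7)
The plan is a proof by contradiction. Assume that $u$ fails to be nonincreasing on $(x_d, \infty)$. Since $u$ is smooth (being a classical solution of the elliptic system~\eqref{e.tw}) and $u(+\infty)=0$, this forces the existence of an interior local maximum $x_m > x_d$, at which $u_x(x_m)=0$ and $u_{xx}(x_m)\leq 0$. Plugging these relations into the rearranged form~\eqref{e.tw2} of the first equation immediately produces the pointwise algebraic inequality
\[
	(\nu+1)\, u(x_m) \,\leq\, \nu + v(x_m).
\]
The entire task is then to control $v(x_m)$ sharply enough to see that this is inconsistent with the fact that $u$ had already dropped below $\tfrac{2\nu}{2\nu+1}$ at the point $x_d$.

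The mechanism for controlling $v(x_m)$ is the explicit convolution representation $v = \cK_\nu * u$ from~\eqref{e.kernel}. Since $\cK_\nu$ is a probability kernel with exponential tails on scale $\sqrt\nu$, one splits $v(x_m)$ into the contribution from $y < x_d$, bounded crudely by $u\leq 1$ and carrying the small weight $\tfrac12 e^{-(x_m-x_d)/\sqrt\nu}$, and the contribution from $y\geq x_d$, on which one leverages the definition of $x_d$. To make the second piece effective, one chooses $x_m$ to be the supremum of $u$ on $[x_d,\infty)$ -- attained by continuity and the decay $u(+\infty)=0$ -- so that $u(y)\leq u(x_m)$ throughout $[x_d,\infty)$. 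The numerical value $\tfrac{2}{2+1/\nu}=\tfrac{2\nu}{2\nu+1}$ is then precisely calibrated so that the split bound on $v(x_m)$, inserted into the display above, yields a strict inequality that rules out any $x_m>x_d$.

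The main obstacle is that the naive left-tail bound $u\leq 1$ is too lossy: applied directly, it collapses to the trivial conclusion $u(x_m)\leq 1$. Closing the estimate requires additional structural input, most likely a partial monotonicity statement ensuring that the hypothetical local maximum $x_m$ is bounded away from $x_d$ (so the exponential weight $\tfrac12 e^{-(x_m-x_d)/\sqrt\nu}$ becomes small and decisive), combined with a finer use of the continuity equality $u(x_d)=\tfrac{2\nu}{2\nu+1}$ to refine the left-side contribution. An alternative, conceptually cleaner route is the sliding method, comparing $u(\cdot)$ with its shifts $u(\cdot-\lambda)$ via a maximum principle on $(x_d,\infty)$, though the nonlocal coupling through $v$ requires the classical argument to be adapted. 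Since the statement is quoted from~\cite{Henderson2021}, the direct way to proceed is to invoke the proof carried out there; the outline above is how one would re-derive it self-contained.
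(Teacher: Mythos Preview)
Your contradiction is set up at the wrong critical point. Working at a local \emph{maximum} forces you to bound $v(x_m)$ from \emph{above} via the kernel, and you have correctly diagnosed that this collapses to $u(x_m)\leq 1$: to the left of $x_d$ the profile can sit near~$1$, so no useful upper bound on $v(x_m)$ is available regardless of how far $x_m$ lies from $x_d$. Neither of your proposed workarounds helps---there is no mechanism forcing $x_m$ away from $x_d$, and the sliding alternative would require a comparison structure that the nonlocal coupling $v=\cK_\nu*u$ does not supply.

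The argument in~\cite{Henderson2021} (sketched in this paper in the proof of \Cref{lem:monotonicity.hyp}) instead takes the \emph{leftmost local minimum} $x_1\in(x_d,\infty)$. At a local minimum, \Cref{l.no_local_min} gives the reverse inequality $u(x_1)\geq (\nu+v(x_1))/(\nu+1)$, so what is needed is a \emph{lower} bound on $v(x_1)$. Because $x_1$ is leftmost, $u$ is nonincreasing on $(x_d,x_1)$; combined with $u\geq \tfrac{2\nu}{2\nu+1}$ on $(-\infty,x_d]$, this yields $u(y)\geq u(x_1)$ for all $y\leq x_1$, and hence $v(x_1)=(\cK_\nu*u)(x_1)>\tfrac12\,u(x_1)$ (half the kernel mass lies to the left, with strict inequality since $u(-\infty)=1>u(x_1)$). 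Feeding this back gives
\[
(\nu+1)\,u(x_1)\;>\;\nu+\tfrac12\,u(x_1),
\qquad\text{i.e.}\qquad
u(x_1)\;>\;\frac{2\nu}{2\nu+1},
\]
contradicting $u(x_1)\leq u(x_d)=\tfrac{2\nu}{2\nu+1}$. The threshold $\tfrac{2}{2+1/\nu}$ is calibrated precisely for this minimum-based inequality, not for the maximum-based one you wrote down.
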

We note that an analogous result (\Cref{lem:monotonicity.hyp}) holds for the hyperbolic case~\eqref{e.hyp}.  This, however, requires some understanding of the structure of $\cZ$ (recall \Cref{def:hyp}) in order to prove, so we postpone it.

Next, we record a basic property of $u$, which is the basis of \Cref{lem:Chris monot} and will be important in many of our arguments.
\begin{lemma}\label{l.no_local_min}
Let $\chi<0$, $\nu>0$, and let $(c,u,v)$ be a solution to~\eqref{e.tw}. If $x_m$ is a  local minimum (resp. maximum) of  $u$ then  $u(x_m) \geq  (\nu + v(x_m))/(\nu + 1)$ (resp. $\leq$). 
\end{lemma}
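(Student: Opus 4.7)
The plan is to use the reformulation~\eqref{e.tw2} of the first equation of~\eqref{e.tw}, which conveniently isolates the reaction-like quantity $u\bigl(\frac{\nu+v}{\nu} - \frac{\nu+1}{\nu}u\bigr)$ whose sign we want to pin down at the critical point. Since the left-hand side of~\eqref{e.tw2} carries a factor of $u_x$, and $u_x(x_m)=0$ at any interior local extremum, that entire drift term drops out, reducing the identity to a sign comparison between $u_{xx}(x_m)$ and the reaction term.

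More concretely, first I would note that by~\eqref{e.0<u<1} we have $u(x_m)>0$, and since a traveling wave solution is $C^2$ by the ellipticity in~\eqref{e.tw} (the $\tfrac{1}{|\chi|}u_{xx}$ term is present), standard calculus applies at $x_m$. Evaluating~\eqref{e.tw2} at $x_m$ and using $u_x(x_m)=0$ yields
\be
	0 = \frac{1}{|\chi|}u_{xx}(x_m) + u(x_m)\left(\frac{\nu+v(x_m)}{\nu} - \frac{\nu+1}{\nu}u(x_m)\right).
\ee
At a local minimum, $u_{xx}(x_m)\geq 0$, so the reaction term must be nonpositive; dividing by the positive quantity $u(x_m)\cdot\tfrac{\nu+1}{\nu}$ and rearranging gives $u(x_m)\geq (\nu+v(x_m))/(\nu+1)$. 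At a local maximum, $u_{xx}(x_m)\leq 0$ and the same computation reverses the inequality, producing the ``$\leq$'' version.

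There is essentially no obstacle here: the ellipticity of~\eqref{e.tw} ensures $u\in C^2$, positivity is guaranteed by~\eqref{e.0<u<1}, and the rest is a direct sign analysis at the critical point. The only minor point worth noting is that the argument relies on the presence of the $u_{xx}$ term with a positive coefficient, which is why it is stated only for~\eqref{e.tw} and not for~\eqref{e.hyp}; the analogous statement for the hyperbolic problem will require the separate work developed in \Cref{lem:monotonicity.hyp} to handle the loss of ellipticity.
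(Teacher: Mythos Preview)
Your proof is correct and is essentially identical to the paper's argument: both evaluate~\eqref{e.tw2} at the critical point, use $u_x(x_m)=0$ and the sign of $u_{xx}(x_m)$ to fix the sign of the reaction term, and then divide through by the positive factor $u(x_m)$ to conclude.
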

\begin{proof}
Indeed, suppose that $x_m$ is a local minimum of $u$.  Then we have that, by~\eqref{e.tw2},
\be
	u \left(1 + \frac{v}{\nu} - \frac{\nu + 1}{\nu} u\right)
		= -(c + v_x) u_x - \frac{1}{|\chi|} u_{xx}
		\leq 0.
\ee
By the nonnegativity of $u$, we have that
\be
	1 + \frac{v}{\nu} 
		\leq \frac{\nu + 1}{\nu} u
	\quad\text{ or, equivalently, }\quad
	\frac{\nu + v}{\nu + 1} \leq u,
\ee
as claimed. The proof of the second claim is very similar and is omitted.
\end{proof}

From~\eqref{e.kernel}, an elementary calculation yields the following oft-used expression for $v_x$:
\be
\label{e.v'}
\begin{split}
	v_x(x) 
		= \frac{1}{2\nu} \int_0^\infty e^{-\frac{y}{\sqrt\nu}} (u(x+y) - u(x-y)) \dd y.
	\end{split}
\ee
Another consequence of~\eqref{e.kernel}, which is crucial to the proof of \Cref{t.lower.pm}, is the following refinement:

\begin{lemma}\label{lem:conv twice}
Let  $\nu>0$.  Then
\be\label{eq:conv twice}
	\cK_\nu = \varphi_\nu * \varphi_\nu,
\ee
where the kernel $\varphi_\nu$ is defined as,
\be
\label{eq:def phi nu}
	\varphi_\nu(x) = \frac{1}{\sqrt \nu} \varphi\left( \frac{x}{\sqrt \nu}\right)
	\qquad\text{with}\quad
	\varphi(x)
		= \frac{1}{\pi} K_0(|x|) \geq 0,
\ee
where $K_0$ is the modified Bessel function of the second kind that has the asymptotics
\be\label{e.Bessel_asymptotics}
	K_0(x)
	\approx\begin{cases}
		\log \frac{2}{|x|} - \gamma
			\qquad &\text{ if } 0 < x \ll 1,
		\\
		\sqrt{\frac{\pi}{2x}} e^{-x}
			\left(1 + O\left(\frac{1}{x}\right)\right)
			\qquad &\text{ if } x \gg 1.
	\end{cases}
\ee
Here $\gamma \approx .5772$ is the Euler-Mascheroni constant.  Further, $\|\varphi_\nu\|_{L^1} = 1$.
\end{lemma}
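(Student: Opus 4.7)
The plan is to verify~\eqref{eq:conv twice} via a Fourier transform computation, and then to read off positivity, $L^1$-normalization, and the asymptotics from standard facts about $K_0$. With the convention $\hat f(\xi) = \int_\R f(x) e^{-ix\xi}\,\dd x$, an elementary calculation yields $\hat\cK(\xi) = 1/(1+\xi^2)$, so the scaling $\cK_\nu(x) = \nu^{-1/2}\cK(x/\sqrt\nu)$ gives $\hat\cK_\nu(\xi) = 1/(1+\nu\xi^2)$.

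On the $\vp$ side, I would invoke the classical integral representation $K_0(r) = \int_0^\infty \cos(r\xi)/\sqrt{1+\xi^2}\,\dd \xi$ for $r>0$, which, by Fourier inversion, identifies $\vp(x) = (1/\pi) K_0(|x|)$ as the inverse Fourier transform of $1/\sqrt{1+\xi^2}$. Scaling then yields $\hat\vp_\nu(\xi) = 1/\sqrt{1+\nu\xi^2}$, and since the Fourier transform intertwines convolution and multiplication, $\widehat{\vp_\nu*\vp_\nu}(\xi) = \hat\vp_\nu(\xi)^2 = \hat\cK_\nu(\xi)$. As both sides are continuous $L^1$ functions, Fourier inversion gives~\eqref{eq:conv twice}.

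For the remaining claims, positivity $\vp \geq 0$ is immediate from the alternative Laplace-type representation $K_0(r) = \int_0^\infty e^{-r\cosh t}\,\dd t$, whose integrand is nonnegative. For the $L^1$ norm, since $\vp_\nu \geq 0$ the total mass equals $\hat\vp_\nu(0) = 1$; equivalently, exchanging the order of integration in the Laplace representation via Fubini and using $\int_0^\infty \sech t\,\dd t = \pi/2$ gives $\int_\R K_0(|x|)\,\dd x = \pi$, hence $\|\vp_\nu\|_{L^1} = 1$ (the scaling is built into the definition of $\vp_\nu$). Finally, the asymptotics~\eqref{e.Bessel_asymptotics} are classical properties of $K_0$ available in any standard reference (e.g., Abramowitz--Stegun) and require no independent argument. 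The only conceptually nontrivial step here is recognizing the Bessel integral representation; everything else is routine bookkeeping.
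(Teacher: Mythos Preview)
Your proof is correct and follows essentially the same route as the paper: compute $\hat\cK$, take its square root on the Fourier side, and identify the result with $(1/\pi)K_0(|x|)$ via the classical integral representation (which is precisely the content of the Abramowitz--Stegun formula the paper cites). The only cosmetic differences are that the paper uses the symmetric Fourier convention and reduces to $\nu=1$ first, whereas you work with the asymmetric convention and keep $\nu$ throughout; your additional positivity argument via the Laplace representation $K_0(r)=\int_0^\infty e^{-r\cosh t}\,\dd t$ is a nice touch that the paper leaves implicit.
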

\begin{proof}
The scaling in $\nu$ is clear, so we show the argument for the case $\nu = 1$.  From~\eqref{e.kernel} and a direct computation, we find
\be
	\hat \cK(\xi) = \frac{1}{\sqrt{2\pi}}
		\int e^{-|x| + i \xi x} \dd x
		= \frac{1}{\sqrt{2\pi}} \frac{1}{|\xi|^2 + 1}.
\ee
Hence, we can take
\be\label{e.c71903}
	\hat \varphi(\xi)
		= (2\pi)^{\sfrac{1}{4}}\sqrt{\hat \cK}(\xi)
		=\frac{1}{\sqrt{\xi^2 +1}}.
\ee
By the convolution theorem, it follows that
\be\label{e.c71901}
	\cK = \compositeaccents{\widecheck}{\widehat{\cK}}
		= \sqrt{2\pi} \compositeaccents{\widecheck}{(\hat\varphi\hat\varphi)}
		= \varphi* \varphi,
\ee
from which~\eqref{eq:conv twice} follows.

The fact that $\varphi$ is given by~\eqref{eq:def phi nu} follows from~\cite[equation~9.6.21]{AbramowitzStegun}.  Next, the $L^1$-norm of $\varphi$ is computed easily:
\be
	\left(\int |\varphi(x)| \dd x\right)^2
		= \left(\int \varphi(x) \dd x\right)^2
		= (\sqrt{2 \pi} \hat\varphi(0))^2
		= \sqrt{2\pi} \hat \cK(0)
		= \int \cK(x) \dd x
		= 1,
\ee
where second and fourth equalities are due to the definition of the Fourier transform, the third equality is the definition of $\varphi$~\eqref{e.c71903}, and the final equality is a direct computation.  This concludes the proof.
\end{proof}

Finally, we will use the property that if $\phi\in L^1(\R)$ is even
 and $f,g\in L^1(\R)\cap L^\infty(\R)$, then,
 \be
 \label{eq:conv fact}
 \int_\R f(x)(\phi*g)(x)\, \dd x = \int_\R (f*\phi)(x)g(x)\, \dd x.
 \ee

\section{The porous medium scaling regime: proof of \Cref{t.lower.pm}}
\label{s.lower.pm}

In this section, we prove \Cref{t.lower.pm}.  
First, we obtain some preliminary estimates on $u$ and $v$.  The main estimates are a bound in $H^1$ of $v$ that is uniform in $\chi$ and $\nu$ as well as a decay of oscillations estimate on $u$.  The proof of \Cref{t.lower.pm}.(ii) is contained in \Cref{ss.pm prop pf}.

\ssection{Preliminary lemmas: regularity of $v$}
\label{ss.pm prelim}
A key element of the proof of  \Cref{t.lower.pm} is the following identity:
\begin{lemma}
    \label{lem:apriori bd nu to 0}
  Let  $(c,u,v)$ be a solution to~\eqref{e.tw}.   For any $\nu>0$ and $\chi<0$, we have 
\[
	\int (\varphi_\nu* u_x)^2    \, \dd x + \frac{1}{|\chi|}\int \frac{(u_x)^2}{u}\, \dd x + \int |u (1-u) \log u|\, \dd x=c.
\]
\end{lemma}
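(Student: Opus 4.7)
\medskip

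\noindent\textbf{Proof plan.} The identity has the flavor of an entropy dissipation identity, so the natural approach is to multiply the first equation of \eqref{e.tw} by $\log u$ and integrate over $\R$. All four terms produce a piece of the claimed identity after integration by parts, together with boundary terms at $\pm\infty$ that must vanish. Because $u(-\infty)=1$ and $u(+\infty)=0$, one has $\log u \to 0$ on the left and $\log u \to -\infty$ on the right, so justifying the vanishing of the boundary terms is the main technical issue; the natural way around it is to work first on $[-L,L]$, establish decay of $u,\,u_x,\,1-u$ at $\pm\infty$ (exponentially, via linearization of \eqref{e.tw} at $u=0$ and at $u=1$), and then pass to the limit.

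\medskip

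\noindent Term-by-term, the calculation goes as follows. For the transport term, writing $F(u)=u\log u - u$ so that $F'(u)=\log u$, one finds $\int u_x \log u\,\dx = F(0)-F(1)=1$, hence $-c\int u_x\log u\,\dx = -c$. For the chemotactic term, integration by parts gives $-\int (v_x u)_x \log u\,\dx = \int v_x u_x\,\dx$ (the boundary contribution $v_x u\log u$ vanishes because $v_x$ is bounded while $u\log u\to 0$ at both ends). Crucially, using that $v=\cK_\nu*u$ and that $u_x\in L^1$ (from exponential decay of $u$ and of $1-u$), one has $v_x=\cK_\nu*u_x$, and \Cref{lem:conv twice} rewrites this as $v_x=\varphi_\nu*\varphi_\nu*u_x$. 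Combining with the convolution identity \eqref{eq:conv fact} (applicable since $\varphi_\nu$ is even) yields
\[
\int v_x\, u_x \,\dx \;=\; \int (\varphi_\nu*\varphi_\nu*u_x)\,u_x\,\dx \;=\; \int (\varphi_\nu*u_x)^2\,\dx.
\]
The diffusion term contributes $\frac{1}{|\chi|}\int u_{xx}\log u\,\dx = -\frac{1}{|\chi|}\int \frac{(u_x)^2}{u}\,\dx$, with the boundary term $\frac{1}{|\chi|}[u_x\log u]_{-\infty}^{+\infty}$ vanishing thanks to exponential decay of $u_x$. Finally, because $0<u<1$ implies $\log u \le 0$, we have $\int u(1-u)\log u\,\dx = -\int |u(1-u)\log u|\,\dx$. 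Adding up the four contributions and rearranging gives exactly the claimed identity.

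\medskip

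\noindent\textbf{Main obstacle.} The core difficulty is to verify the decay needed for the boundary terms to vanish and for each integral in the identity to be finite a priori. Concretely, one needs (i) $u\sim e^{-\lambda_+ x}$ as $x\to+\infty$ (so that $u_x\log u \sim \lambda_+^2 x e^{-\lambda_+ x}\to 0$ and $(u_x)^2/u$ and $|u\log u|$ are integrable at $+\infty$), and (ii) $1-u\sim e^{\lambda_- x}$ as $x\to-\infty$ (so that $u_x\log u\to 0$, and $|(1-u)\log u|\sim (1-u)^2$ is integrable at $-\infty$). Both decays follow from a standard ODE analysis of \eqref{e.tw2} at the endpoints: near $u=0$ the equation is elliptic with bounded coefficients and linearizes to a linear constant-coefficient ODE with positive root, while near $u=1$ one linearizes $u = 1-w$ and gets exponential approach. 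Once this exponential decay is in hand, each integral is absolutely convergent and one may legitimately perform the integration by parts on $[-L,L]$ and let $L\to\infty$, with all boundary terms going to zero except the $-c$ produced by $F(u)$.
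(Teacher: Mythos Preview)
Your proposal is correct and follows essentially the same route as the paper: multiply \eqref{e.tw} by $\log u$, integrate by parts on $[-L,L]$, let $L\to\infty$, and use the key convolution identity $\cK_\nu=\varphi_\nu*\varphi_\nu$ together with \eqref{eq:conv fact} to rewrite $\int v_x u_x$ as $\int(\varphi_\nu*u_x)^2$. The only difference is that the paper dispatches the boundary terms more tersely (invoking elliptic regularity to get $u_x,v_x\to 0$ at $\pm\infty$), whereas your exponential-decay argument via linearization is slightly more explicit---and indeed more careful, since the term $u_x\log u$ at $+\infty$ genuinely requires knowing that $u_x$ decays fast enough to beat the logarithmic blow-up.
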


Before proving \Cref{lem:apriori bd nu to 0}, we point out two  bounds that follow from this and that are useful in taking the limit in \Cref{t.lower.pm}. 
\begin{corollary}\label{c.holder}
Under the assumptions of \Cref{lem:apriori bd nu to 0}, we have that
\be
	\int(v_x)^2\,\dd x\leq  c 
	\quad\text{ and } \quad
	[v]_{C^{1/2}}
		\leq \sqrt c.
\ee
\end{corollary}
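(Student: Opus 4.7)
The plan is straightforward: both bounds follow by combining Lemma \ref{lem:apriori bd nu to 0} with the factorization $\cK_\nu = \varphi_\nu * \varphi_\nu$ from Lemma \ref{lem:conv twice}, together with standard Young/Cauchy--Schwarz inequalities. I do not expect any genuine obstacle here; the only thing to keep in mind is that $\varphi_\nu \in L^1$ with $\|\varphi_\nu\|_{L^1}=1$, which is precisely what Lemma \ref{lem:conv twice} provides.

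First I would write $v_x$ in terms of $u_x$. Differentiating $v = \cK_\nu * u$ and using \eqref{eq:conv twice}, one has
\[
v_x = \cK_\nu * u_x = \varphi_\nu * (\varphi_\nu * u_x).
\]
Applying Young's convolution inequality with $p=1$ and $q=2$, together with $\|\varphi_\nu\|_{L^1}=1$, yields
\[
\|v_x\|_{L^2} \leq \|\varphi_\nu\|_{L^1}\,\|\varphi_\nu*u_x\|_{L^2} = \|\varphi_\nu*u_x\|_{L^2}.
\]
Squaring and invoking Lemma \ref{lem:apriori bd nu to 0} (and dropping the two other nonnegative terms on its left-hand side), one obtains $\int (v_x)^2\, \dd x \leq c$, which is the first claim.

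For the Hölder estimate, since $v \in W^{1,2}_{\rm loc}$ I would represent differences via the fundamental theorem of calculus and apply Cauchy--Schwarz: for any $x < y$,
\[
|v(y)-v(x)| = \left|\int_x^y v_x(s)\, \dd s\right| \leq \sqrt{y-x}\,\|v_x\|_{L^2} \leq \sqrt{c}\,\sqrt{y-x},
\]
which yields $[v]_{C^{1/2}} \leq \sqrt{c}$, completing the proof. The key conceptual point is that the square-root-type smoothing provided by $\varphi_\nu$ (rather than trying to pass a full derivative onto $u$) is exactly what is needed so that Young's inequality returns an estimate independent of $\nu$; this is why Lemma \ref{lem:conv twice} is stated in the form it is.
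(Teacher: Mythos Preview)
Your proof is correct and follows essentially the same approach as the paper: both use the factorization $\cK_\nu=\varphi_\nu*\varphi_\nu$ together with Young's inequality and $\|\varphi_\nu\|_{L^1}=1$ to bound $\|v_x\|_{L^2}$ by the quantity controlled in Lemma~\ref{lem:apriori bd nu to 0}, and then obtain the $C^{1/2}$ bound via Cauchy--Schwarz on $\int_x^y v_x$.
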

\begin{proof}
	First, we have, by the expression~\eqref{eq:conv twice} for $\mathcal{K}_\nu$ (recall that $v = \cK_\nu*u$~\eqref{e.kernel}), 
	Young's inequality for convolutions,  the fact that $\varphi_\nu$ has $L^1$-norm one (\Cref{lem:conv twice}), and \Cref{lem:apriori bd nu to 0},
\be
\label{e.121056}
\|v_x\|_{L^2} = \|\phi_\nu*(\phi_\nu*u)_x\|_{L^2} \leq \|(\phi_\nu*u)_x\|_{L^2} 
 \leq \sqrt{c}.
\ee

Next, notice that, for any $x < y$,
\be\label{e.c71904}
	|v(x) - v(y)|
	\leq \int_x^y |v_x(z)| dz
		\leq \sqrt{|y - x|} \Big( \int_x^y v_x(z)^2 dz\Big)^{1/2}
		\leq \sqrt{c |x-y|},
\ee
where the last inequality follows from ~\eqref{e.121056}. The desired estimate thus follows.
\end{proof}

We now prove the crucial identity, \Cref{lem:apriori bd nu to 0}. 

\begin{proof}[Proof of Lemma \ref{lem:apriori bd nu to 0}]

We multiply the first equation in~\eqref{e.tw} by $\log u$ and integrate over $[-L,L]$ for $L\gg 1$ to obtain,
\[
	-c\int_{-L}^L u_x\log u \, \dd x =  \int_{-L}^L (v_x u)_x \log u \, \dd x + \frac{1}{|\chi|}\int_{-L}^L u_{xx}\log u\, \dd x + \int_{-L}^L u (1-u) \log u\, \dd x.
\]
Integrating by parts where possible, we find
\be
	\begin{split}
		- c &u(L) \log u(L) + cu(-L)\log u(-L)  + c\int_{-L}^L u_x\, \dd x 
		\\&
		=  v_x(L) u(L) \log u(L) - v_x(-L) u(-L) \log u(-L) - \int_{-L}^L v_x u_x  \, \dd x
			\\&\quad
			+\frac{1}{|\chi|} (u_x(L) \log u(L)  - u_x(-L) \log u(-L)) - \frac{1}{|\chi|}\int_{-L}^L \frac{(u_x)^2}{u}\, \dd x
			+ \int_{-L}^L u (1-u) \log u\, \dd x.
	\end{split}
\ee
We claim that all boundary terms vanish as $L\to\infty$.  Indeed, by assumption $u(-\infty) = 1$ and $u(+\infty)= 0$, so that elliptic regularity implies that $u_x(\pm \infty) = 0 = u_{xx}(\pm \infty)$.  Similarly, $v_x(\pm \infty) = 0 = v_{xx}(\pm\infty)$.  Thus, after integrating also the last term on the left, we find
\be
	- c =  - \int v_x u_x  \, \dd x - \frac{1}{|\chi|}\int \frac{(u_x)^2}{u}\, \dd x + \int u (1-u) \log u\, \dd x.
\ee

Here is the key step. From the expression~\eqref{eq:conv twice} for $v$ and properties of convolution, we find $v_x=\varphi_\nu*\varphi_\nu*u_x$. Using this in the first term on the right-hand side, and then recalling the property~\eqref{eq:conv fact} of convolution, we find,
\begin{align}
-c &=  - \int (\varphi_\nu*\varphi_\nu*u_x) u_x  \, \dd x - \frac{1}{|\chi|}\int \frac{(u_x)^2}{u}\, \dd x + \int u (1-u) \log u\, \dd x\\
&=  - \int (\varphi_\nu* u_x)^2  \, \dd x - \frac{1}{|\chi|}\int \frac{(u_x)^2}{u}\, \dd x + \int u (1-u) \log u\, \dd x.
\end{align}
The desired estimate follows upon recalling that $(1-u)\log u \leq 0$ for all $u \geq 0$. 
\end{proof}

\ssection{Preliminary lemmas: decay of oscillations of $u$}

Formally taking $\nu$ to zero in the second equation of~\eqref{e.tw} indicates that we should expect $u-v$ to converge to zero.  This is exactly what we now prove, in a quantitative way.  A significant issue, though, is that we do not have {\em any} regularity estimates on $u$ that are uniform in $\nu$ and $\chi$. 
To get around this, we use the uniform estimates that we have established on $v$ to prove the following decay of oscillations of $u$ as $\nu \to 0$.

\begin{lemma}
\label{lem:u-v}
If $(c,u,v)$ is a solution to~\eqref{e.tw} with $\chi<0$ and $0<\nu<\infty$, then there is a universal constant $C>0$ such that
\be\label{e.c81602}
	\max_{[x_0-\nu^{\sfrac{1}{4}}, x_0 + \nu^{\sfrac{1}{4}}]} u(x)
		- \min_{[x_0-\nu^{\sfrac{1}{4}}, x_0 + \nu^{\sfrac{1}{4}}]} u(x)
		\leq C(\sqrt c + 1) \nu^{\sfrac{1}{8}}.
\ee
As a result, we have
\be\label{e.c81601}
	\|u - v\|_{L^\infty} \leq C (\sqrt c + 1) \nu^{\sfrac{1}{8}}.
\ee
\end{lemma}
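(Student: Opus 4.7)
The lemma has two parts: the oscillation estimate \eqref{e.c81602} and the uniform closeness \eqref{e.c81601}. The second follows from the first by a standard splitting of the convolution defining $v$, so the core task is the oscillation bound. The strategy is to combine \Cref{l.no_local_min}, which pins $u$ to $v$ at any local extremum with error $O(\nu)$, with the $C^{1/2}$ regularity of $v$ from \Cref{c.holder}. Explicitly, at a local maximum $x^*$ of $u$ in $\R$ one has $u(x^*) \le (\nu + v(x^*))/(\nu + 1) \le v(x^*) + \nu$, and at a local minimum $u(x^*) \ge v(x^*)$.

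For \eqref{e.c81602}, I would set $I := [x_0 - \nu^{1/4}, x_0 + \nu^{1/4}]$ and pick $x_M \in \argmax_I u$, $x_m \in \argmin_I u$. If both $x_M$ and $x_m$ lie in the interior of $I$, they are local extrema of $u$ in $\R$, and the estimates above together with \Cref{c.holder} give
\[
u(x_M) - u(x_m) \le v(x_M) - v(x_m) + \nu \le \sqrt{c}\,|x_M - x_m|^{1/2} + \nu \le \sqrt{2c}\,\nu^{1/8} + \nu,
\]
which is the desired bound. When $x_M$ or $x_m$ lies on $\partial I$, I would use \Cref{lem:Chris monot}: past $x_d := \inf\{x : u(x) < 2\nu/(2\nu + 1)\}$, $u$ is nonincreasing and bounded above by $2\nu/(2\nu + 1) \le 2\nu$, so for any $I \subset (x_d, \infty)$ the oscillation is automatically at most $2\nu$, which is absorbed into $C(\sqrt c+1)\nu^{1/8}$. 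In the remaining situations I would enlarge $I$ by a controlled amount until an interior local extremum of $u$ appears (guaranteed by the asymptotics $u(-\infty) = 1$, $u(+\infty) = 0$) and apply the $C^{1/2}$ bound on $v$ at the enlarged scale.

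For \eqref{e.c81601}, I use $v = \cK_\nu * u$ and $\int \cK_\nu = 1$ to write $(u - v)(x) = \int \cK_\nu(x - y)(u(x) - u(y))\,\dd y$. Splitting at $|y - x| = \nu^{1/4}$, the inner integral is bounded via \eqref{e.c81602}, and the outer integral is controlled by $|u(x) - u(y)| \le 1$ together with the explicit tail $\int_{|z| > \nu^{1/4}} \cK_\nu(z)\,\dd z = e^{-\nu^{-1/4}}$. This yields
\[
|u(x) - v(x)| \le C(\sqrt{c} + 1)\nu^{1/8} + e^{-\nu^{-1/4}} \le C(\sqrt{c} + 1)\nu^{1/8},
\]
where the last inequality absorbs the super-polynomial tail for $\nu$ small (the case $\nu \ge 1$ is trivial since $|u - v| \le 1 \le \nu^{1/8}$).

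The main obstacle is the boundary case in the oscillation step: if $u$ is essentially monotone on $I$, no local extremum of $u$ in $\R$ is directly available inside $I$. The plan is to use \Cref{lem:Chris monot} to dispense with the ``tail'' region past $x_d$ automatically, and on the complementary region to enlarge $I$ with quantitatively controlled enlargement (using the partial monotonicity and the boundary conditions of $u$) so that the Hölder regularity of $v$ on the enlarged interval still delivers an $O(\nu^{1/8})$ bound.
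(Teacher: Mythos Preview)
Your argument for the interior case (both $x_M$ and $x_m$ are local extrema of $u$ in $\R$) is correct, and your derivation of \eqref{e.c81601} from \eqref{e.c81602} matches the paper. The gap is in the boundary case.

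Your plan there is to ``enlarge $I$ by a controlled amount until an interior local extremum of $u$ appears.'' But nothing in the setup quantifies that amount: $u$ may well be monotone on an interval of length $\gg \nu^{1/4}$ (say, on $[-100,100]$) while taking values of order $1$ there, so the nearest local extremum could be far from $x_0$ and the H\"older bound on $v$ would only give $\sqrt c\, L^{1/2}$ with $L$ uncontrolled. Your appeal to \Cref{lem:Chris monot} disposes of the case $I\subset(x_d,\infty)$, but says nothing about $I$ landing in $(-\infty,x_d)$, where $u$ is not known to be monotone globally yet may be monotone on $I$ with $u\sim 1/2$.

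The paper's fix is a different idea: instead of searching for an a priori existing local extremum, it \emph{manufactures} good boundary points via the kernel. One enlarges $I$ to a fixed interval $[x_0-3\nu^{1/4},x_0+3\nu^{1/4}]$ and picks the endpoints $y_\pm:=\argmin_{[x_0\pm2\nu^{1/4}-\nu^{1/4},\,x_0\pm2\nu^{1/4}+\nu^{1/4}]}u$. Because $\cK_\nu$ concentrates its mass on $[-\nu^{1/4},\nu^{1/4}]$, the formula $v=\cK_\nu*u$ gives
\[
v(x_0\pm 2\nu^{1/4})\ \ge\ u(y_\pm)-e^{-\nu^{-1/4}}\ \ge\ u(y_\pm)-\nu^{1/8},
\]
hence $u(y_\pm)\le v(x_0)+(\sqrt{3c}+1)\nu^{1/8}$ by the $C^{1/2}$ regularity of $v$. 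This bounds $u$ at the boundary of $[y_-,y_+]\supset I$ without any local-extremum hypothesis; the max of $u$ on $[y_-,y_+]$ is then either at $y_\pm$ (done) or at an interior point, where \Cref{l.no_local_min} applies. The lower bound on the min proceeds symmetrically with $\argmax$ in place of $\argmin$. This is the missing ingredient.
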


\begin{proof}
First, note that we may restrict to $\nu$ sufficiently small so that
\be
\label{eq:nudelta ineq}
	\frac{1}{ e^{\nu^{-\sfrac{1}{4}}}}
		< \nu^{\sfrac{1}{8}}
		\quad\text{ and }\quad \nu < 1.
\ee
Indeed, when $\nu$ is large enough that~\eqref{eq:nudelta ineq} does not hold then the proof is finished by possibly increasing $C$ and using~\eqref{e.0<u<1}.

Next, we note that~\eqref{e.c81601} follows easily from~\eqref{e.c81602} and~\eqref{e.kernel}.  Indeed, for any $x_0$, we have
\[
	\begin{split}
		v(x_0) &\leq \int_{-\nu^{\sfrac{1}{4}}}^{\nu^{\sfrac{1}{4}}} \frac{e^{-\frac{|y|}{\sqrt\nu}}}{2\sqrt\nu} \left(\max_{[x_0-\nu^{\sfrac{1}{4}}, x_0 + \nu^{\sfrac{1}{4}}]} u\right) \dd y
				+ \int_{[-\nu^{\sfrac{1}{4}}, \nu^{\sfrac{1}{4}}]^c} \frac{e^{-\frac{|y|}{\sqrt\nu}}}{2\sqrt\nu} \dd y
			\\&
			\leq \int_{-\nu^{\sfrac{1}{4}}}^{\nu^{\sfrac{1}{4}}} \frac{e^{-\frac{|y|}{\sqrt\nu}}}{2\sqrt\nu} \left(u(x_0) + C (\sqrt c + 1) \nu^{\sfrac{1}{8}}\right) \dd y
				+ e^{-\nu^{-\sfrac{1}{4}}}
			\leq u(x_0) + (C (\sqrt c + 1) + 1) \nu^{\sfrac{1}{8}}.
	\end{split}
\]
The argument for the lower bound on $v(x_0)$ is similar, and~\eqref{e.c81601} follows.

Fix any $x_0 \in \R$. We actually shall prove
\be\label{e.c72002}
	\begin{split}
	&
		\max_{[x_0-\nu^{\sfrac{1}{4}}, x_0 + \nu^{\sfrac{1}{4}}]} u(x)
			\leq v(x_0) + (\sqrt{3c}+1)\nu^{\sfrac{1}{8}}
		\qquad\text{ and}
	\\&
		\min_{[x_0-\nu^{\sfrac{1}{4}}, x_0 + \nu^{\sfrac{1}{4}}]} u(x)
			\geq v(x_0) - (\sqrt{3c}+1)\nu^{\sfrac{1}{8}},
	\end{split}
\ee
from which~\eqref{e.c81602} follows. 
We prove these two bounds separately, beginning with the bound on the maximum. A key ingredient in both is the regularity of $v$: due to \Cref{c.holder}, for all $x \in [x_0-3\nu^{\sfrac{1}{4}}, x_0 + 3\nu^{\sfrac{1}{4}}]$, 
\be\label{e.c7155}
	|v(x)-v(x_0)|
		\leq   \sqrt{3c}\nu^{\sfrac{1}{8}}.
\ee

We now begin by proving the upper bound on the maximum in~\eqref{e.c72002}.  Our goal is to leverage \Cref{l.no_local_min} to obtain an upper bound on $u$.  A substantial complication, however, is that \Cref{l.no_local_min} is only applicable at a local extremum, while the maximum of $u$ over $[x_0 - \nu^{\sfrac{1}{4}}, x_0 + \nu^{\sfrac{1}{4}}]$ may occur at the boundary and we do not {\em a priori} have control over $u(x_0 \pm \nu^{\sfrac{1}{4}})$.  As such, our approach is to identify an interval {\em containing} $[x_0-\nu^{\sfrac{1}{4}}, x_0 + \nu^{\sfrac{1}{4}}]$ for which we can establish a suitable bound on $u$ at the end points.  

We now find such an interval.  For any $x \in [x_0 - 2\nu^{\sfrac{1}{4}}, x_0 + 2\nu^{\sfrac{1}{4}}]$, let
\be
	y_x := \underset{[x-\nu^{\sfrac{1}{4}}, x + \nu^{\sfrac{1}{4}}]}{\argmin} u.
\ee
Then, due to~\eqref{e.kernel},~\eqref{eq:nudelta ineq}, and the fact that $\|u\|_{L^\infty}\leq 1$, we find
\be\label{e.c81207}
	\begin{split}
	v(x)
		&\geq \int_{[x - \nu^{1/4}, x + \nu^{1/4}]} \frac{1}{2\sqrt \nu}e^{-\frac{|x-y|}{\sqrt\nu}} \left(\min_{[x - \nu^{1/4}, x + \nu^{1/4}]} u\right) dy
		\\&= u(y_x) \left(1 - e^{-\nu^{-1/4}}\right)
		\geq u(y_x) - \nu^{\sfrac{1}{8}}.
	\end{split}
\ee
Thus, rearranging the above inequality and using (\ref{e.c7155}) yields,
\[
	u(y_x)
		\leq v(x)+ \nu^{\sfrac{1}{8}}
		\leq v(x_0)+ \sqrt{3c}\nu^{\sfrac{1}{8}} + \nu^{\sfrac{1}{8}}. 
\]
Applying~\eqref{eq:nudelta ineq},  we find
\be
\label{eq:u(yx)}
	u(y_x)\leq v(x_0)+( \sqrt{3c}+1)\nu^{\sfrac{1}{8}}.
\ee
Notice that this is exactly the inequality that we wished to prove, albeit only for $y_x$.  This suggests that the interval we should work on has endpoints $y_x$ for well-chosen $x$, which is what we do now.

Let $y_\pm = y_{x_0 \pm 2\nu^{\sfrac{1}{4}}}$ and notice that, due to~\eqref{eq:nudelta ineq} and the definition of $y_\cdot$,
\be\label{e.x_pm}
	y_- \in [x_0 - 3\nu^{\sfrac{1}{4}}, x_0 - \nu^{\sfrac{1}{4}}]
	\qquad\text{ and }\qquad
	y_+ \in [x_0 + \nu^{\sfrac{1}{4}}, x_0 + 3\nu^{\sfrac{1}{4}}].
\ee
As a result, $[x_0 - \nu^{\sfrac{1}{4}}, x_0 + \nu^{\sfrac{1}{4}}] \subset [y_-, y_+]$.   We shall now use the argument, based on \Cref{l.no_local_min} and outlined above, to
 establish, 
\be\label{e.c7154}
	\max_{[y_-, y_+]} u(x)
		\leq v(x_0) + ( \sqrt{3c}+1) \nu^{\sfrac{1}{8}}.
\ee
According to~\eqref{e.x_pm}, the first inequality in~\eqref{e.c72002} follows from~\eqref{e.c7154}.

Let $x_m\in[y_-, y_+]$ be the maximum of $u$ over $[y_-,y_+]$.  If $x_m$ is one of $y_\pm$, then~\eqref{e.c7154} holds by virtue of~\eqref{eq:u(yx)}.  Thus, let us consider the case that $x_m \in (y_-, y_+)$.  Then $x_m$ is the location of a local maximum.  Using Lemma \ref{l.no_local_min}, we find, at $x_m$,
\be
	\max_{[y_-,y_+]} u
	= u(x_m)
	\leq \frac{1}{1+\nu} \left( v(x_m) + \nu\right)
	\leq v(x_m) + \nu.
\ee
Recalling~\eqref{e.c7155} and~\eqref{e.x_pm}, we have
\be
	\max_{[y_-,y_+]} u
		\leq v (x_0)
			+  \sqrt{3c}\nu^{\sfrac{1}{8}} + \nu
		\leq v(x_0) + (\sqrt{3 c}+1) \nu^{\sfrac{1}{8}}.
\ee
This concludes the proof of the first inequality in~\eqref{e.c72002}.

The proof of the second inequality in~\eqref{e.c72002} follows along the same lines.  We include it in order to show the necessary (slight) modifications; however, we provide less exposition due to its similarities to the proof above.  First, fix any $x \in [x_0 - 3\nu^{\sfrac{1}{4}}, x_0 + 3\nu^{\sfrac{1}{4}}]$ and let
\be
	\tilde y_x = \underset{ [x_0 - \nu^{\sfrac{1}{4}}, x_0 + \nu^{\sfrac{1}{4}}]}{\argmax} u.
\ee
Then, using the expression~\eqref{e.kernel} for $v$ and the definition of $\mathcal{K}_\nu$ we find,
\be
	\begin{split}
	v(x)  &= \int_{[x-\nu^{1/4}, x+\nu^{1/4}]} \cK_\nu(x-y)u(y)\, dy + \int_{|y-x|>\nu^{1/4}} \cK_\nu(x-y)u(y)\, dy 
		\\&\leq \int_{[x-\nu^{1/4}, x+\nu^{1/4}]}
			\frac{1}{2\sqrt\nu}
			e^{-\frac{|x-y|}{\sqrt\nu}}  
			\left( \max_{[x-\nu^{1/4}, x+\nu^{1/4}]} u\right) dy
			+  e^{-\nu^{-1/4}}
		\\&
		\leq u(\tilde y_x)
			+  \nu^{\sfrac{1}{8}}.
	\end{split}
\ee 
Thus, recalling~\eqref{e.c7155}, we find
\be
\label{e.c9274}
	v(x_0)
		\leq v(x)+ \sqrt{3c}\nu^{\sfrac{1}{8}}
		\leq u(\tilde{y}_x)+(\sqrt{3 c} + 1)\nu^{\sfrac{1}{8}}.
	\ee
	As in the proof of the first inequality in~\eqref{e.c72002}, let $\tilde{y}_\pm =\tilde{y}_{ x_0 \pm 2\nu^{\sfrac{1}{4}}}$, and, as before, notice $[x_0-\nu^{1/4}, x_0+\nu^{1/4}]\subset[\tilde{y}_-, \tilde{y}_+]$.  Hence, it is enough to establish the stronger claim: 
	\be
	\label{eq:stronger min}
	\min_{[\tilde{y}_-, \tilde{y}_{+}]} u(x)
		\geq v(x_0) - ( \sqrt{3c}+1) \nu^{\sfrac{1}{8}}.
\ee
To this end, let $x_{\min}$ be the location of the minimum of $u$ over the interval $[\tilde{y}_-, \tilde{y}_+]$.  If $x_{\min} = \tilde{y}_{\pm}$, then we are finished by~\eqref{e.c9274}.  Otherwise, $x_{\min}$ is an interior minimum and we find, via Lemma \ref{l.no_local_min}, 
\be
		u(x_{\min})
			\geq \frac{v(x_{\min}) + \nu}{1 + \nu} 
			\geq \frac{v(x_{\min})}{1 + \nu}
			\geq (1-\nu) v(x_{\min})
			\geq v(x_{\min}) - \nu
			\geq v(x_0) -  \sqrt{3c} \nu^{\sfrac{1}{8}} - \nu,
\ee
where we use that $\sfrac{1}{(1+\nu)} \geq 1 - \nu$ in the third inequality, that $v \leq 1$ to get the fourth inequality, and the estimate~\eqref{e.c7155} in the last inequality.  The claim then follows by~\eqref{eq:nudelta ineq}.  This concludes the proof.
\end{proof}

\ssection{Proof of \Cref{t.lower.pm}} \label{ss.pm prop pf}

We establish the second part of \Cref{t.lower.pm}.
\begin{proof}[Proof of \Cref{t.lower.pm}.(ii)]
	First we address the notion of convergence.  Since $c_n$ is bounded uniformly above, by assumption, there is $c$ such that $c_n \to c$ as $n\to\infty$
	up to passing to a subsequence.  Similarly, by \Cref{c.holder} 
	and the bound $\|v\|_{L^\infty}\leq 1$, we obtain $u \in H^1\cap C^{\sfrac{1}{2}} \cap L^\infty$
	such that, as $n\to\infty$,
	\be\label{e.c72003}
		v_n \to u
			\qquad \text{ weakly in } H_{\rm loc}^1
				\text{ and strongly in } C^\alpha_{\rm loc},
	\ee
	for any $\alpha \in (0,\sfrac{1}{2})$.
	Finally, due to \Cref{lem:u-v} and~\eqref{e.c72003}, we have
	\be\label{e.c72006}
		u_n \to u
			\qquad\text{ in } L^\infty_{\rm loc}. 
	\ee
	
	Next, we investigate what qualitative properties $u$ enjoys.  First, by~\eqref{e.0<u<1}, we have $0 \leq u \leq 1$.  Next, from the normalization~\eqref{e.normalization.pm} and the uniform convergence of $u_n$ to $u$~\eqref{e.c72006}, it follows that
	\be\label{e.c81606}
		\min_{x\leq 0} u(x) = u(0) = \delta.
	\ee

	Next, we argue that $u(-\infty) = 1$.  By \Cref{lem:apriori bd nu to 0},  \Cref{c.holder}, and the convergence of $v_n$ and $u_n$ to $u$, implies that
	\be\label{e.c81607}
		[u]_{C^{\sfrac{1}{2}}}
			\leq \sqrt c
			\quad\text{ and }\quad
		\int |u_x|^2 \dd x + \int |u(1 - u) \log u| \dd x
			\leq c
			< +\infty.
	\ee
	By~\eqref{e.c81606} and the regularity of $u$~\eqref{e.c81607}, the above implies that
	\be\label{e.c81701}
		u(-\infty) = 1.
	\ee
	We note that a byproduct of~\eqref{e.c81607} is that
	\be
		c> 0
	\ee
	as~\eqref{e.c81606} and~\eqref{e.c81701} imply that $u$ is nonconstant.  We use this in the proof of monotonicity below.  Further, it must be $u(+\infty) = 0$ (otherwise the integral on the left hand side of~\eqref{e.c81607} would be infinite). 
	
	Next, we show that $u$ is a distributional solution to~\eqref{e.pm}.	
	To this end, fix any $\psi \in C^2_c(\R)$.  At the level of $u_n$, multiply~\eqref{e.tw} by $\psi$ and integrate by parts to obtain:
	\be
		c_n \int u_n \psi_x \dd x
			+ \int (v_n)_x u_n \psi_x \dd x
			= \frac{1}{|\chi_n|} \int u_n \psi_{xx} \dd x
				+ \int u_n(1-u_n) \psi \dd x.
	\ee
	Using~\eqref{e.c72003} and~\eqref{e.c72006} and taking $n\to\infty$, we find
	\be\label{e.distributional}
		c \int u \psi_x \dd x
			+ \int u_x u \psi_x \dd x
			= \eps \int u \psi_{xx} \dd x
				+ \int u (1 - u ) \psi \dd x,
	\ee
	that is, $\bar u$ is a distributional solution to~\eqref{e.pm}.  
\end{proof}

To deduce the desired bound on the limiting speed, we appeal to the results of~\cite{Aronson1980}  concerning solutions of~\eqref{e.pm}. However, the notion of solution given in  \cite[equation (2.3)]{Aronson1980} is, at first glance, different from that of distributional solutions. However, as we establish below, the two notions are equivalent. (We also remark that these notions are the same as that of viscosity solution \cite{CrandallIshiiLions}; however, since we do not use this fact in our work, we do not provide a proof).

We also note that if $\ep>0$, then solutions of~\eqref{e.pm} are classical; thus, the following lemma is only interesting when considering solutions of~\eqref{e.pm} in the case $\ep=0$. 

\begin{lemma} 
\label{p.pm classical}
Suppose $0<\alpha<1$ and 
$u\in H^1_{\mathrm{loc}}(\mathbb{R})$ satisfying $0\leq u\leq 1$ on $\R$, with $\lim_{x\rightarrow \infty}u(x)=1$ and $\lim_{x\rightarrow -\infty}u(x)=0$. Suppose $c>0$. Then the following are equivalent:
\begin{enumerate}
\item \label{item:dist soln} $(u,c)$ is a distributional solution of~\eqref{e.pm}.
\item \label{item:Aronson soln} there exists $\omega\in (-\infty,+\infty]$ such that $\{u>0\}=(-\infty, \omega)$ and $u\in C^{2,\alpha}(-\infty, \omega)$.  Further, $u$ satisfies~\eqref{e.pm} classically on $(-\infty, \omega)$ and is strictly decreasing on $(-\infty, \omega)$.
\end{enumerate}
 \end{lemma}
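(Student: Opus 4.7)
The plan is to establish the two implications separately. The direction $(2)\Rightarrow(1)$ reduces to a distributional analysis of the boundary contributions at the interface $\omega$, while $(1)\Rightarrow(2)$ requires regularity upgrades followed by a structural analysis of $\{u>0\}$.

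For $(2)\Rightarrow(1)$, I would consider the distribution $T := -cu_x - (uu_x)_x - \eps u_{xx} - u(1-u)$ on $\R$. By hypothesis $T$ vanishes classically on $(-\infty,\omega)$, and on $(\omega,\infty)$ since $u\equiv 0$ there, so $\supp T\subseteq\{\omega\}$. Since $u$ is continuous at $\omega$ with $u(\omega)=0$, the products $u(1-u)$ and $uu_x = \tfrac{1}{2}(u^2)_x$ extend continuously across $\omega$ and contribute no concentrated mass. When $\eps=0$ no further term appears and $T=0$; when $\eps>0$, the ellipticity of \eqref{e.pm} combined with the bootstrap in the $(1)\Rightarrow(2)$ direction below forces $\omega=+\infty$, so the interface question is moot.

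For $(1)\Rightarrow(2)$, I proceed in four steps. \emph{Step 1 (global regularity):} Sobolev embedding gives $u\in C^{1/2}_{\rm loc}(\R)$, and rearranging the distributional equation yields
\begin{equation*}
    \bigl(\tfrac{u^2}{2}+\eps u\bigr)_{xx} = -cu_x - u(1-u) \in L^2_{\rm loc},
\end{equation*}
so $w:=u^2/2+\eps u\in H^2_{\rm loc}\hookrightarrow C^{1,1/2}_{\rm loc}$ and $(u+\eps)u_x = w_x$ is globally continuous. \emph{Step 2 (regularity on $O:=\{u>0\}$):} dividing \eqref{e.pm} by $u+\eps>0$ produces the nondegenerate ODE $u_{xx} = -(cu_x+u_x^2+u(1-u))/(u+\eps)$, whose bootstrap delivers $u\in C^{2,\alpha}_{\rm loc}(O)$. \emph{Step 3 (structure):} suppose for contradiction that $O$ has a connected component $(a,b)$ with $a>-\infty$; since $w\geq 0$ and $w(a)=0$, the $C^1$ regularity of $w$ forces $w_x(a)=0$, so integrating \eqref{e.pm} from $a^+$ to $x\in(a,b)$ gives
\begin{equation*}
    -cu(x) - (u+\eps)u_x(x) = \int_a^x u(1-u)\dd y \geq 0,
\end{equation*}
whence $u_x\leq 0$ on $(a,b)$, contradicting $u(a)=0<u(a+\delta)$ via the mean value theorem. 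Thus $O=(-\infty,\omega)$ for a unique $\omega\in(-\infty,+\infty]$. \emph{Step 4 (monotonicity):} at any interior critical point $x^*$, the equation yields $u_{xx}(x^*)=-(1-u(x^*))/(u(x^*)+\eps)$, which is strictly negative unless $u(x^*)=1$; the latter case would force all derivatives of $u$ to vanish at $x^*$ upon iterating the equation, making $u\equiv 1$ contrary to the asymptotics. Hence every critical point is an isolated strict local maximum with no interior local minimum, and $u(-\infty)=1$ excludes even one critical point, giving $u_x<0$ on $(-\infty,\omega)$.

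The main obstacle is Step 3: when $\eps=0$, \eqref{e.pm} degenerates at $u=0$ and no direct strong minimum principle applies, so the integrated identity above is indispensable. Its validity depends crucially on the global $C^1$ regularity of $w$ established in Step 1, which is what converts the distributional equation into a usable pointwise identity near a hypothetical finite endpoint of a component of $O$.
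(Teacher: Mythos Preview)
Your argument is correct and takes a genuinely different route in the main direction $(1)\Rightarrow(2)$. The paper first sets $\omega=\inf\{x:u(x)=0\}$, obtains regularity and monotonicity on $(-\infty,\omega)$ via uniform ellipticity and Lemma~\ref{lem.pm no min}, and then rules out $u>0$ anywhere in $(\omega,\infty)$ by a test-function construction: choosing $x_M>\omega$ with $u(x_M)\in(0,1)$ and $u_x\ge0$ just to its left, it builds cutoffs $\psi_\mu$ supported on $[\omega-\mu,x_M]$, exploits the sign of $u_x$ near both endpoints together with the $C^{1/2}$ bound on $u$, and lets $\mu\to0$ to obtain $\int_\omega^{x_M}u(1-u)\,\dd x\le 0$, a contradiction. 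Your Steps~1 and~3 replace this with the observation that $w=u^2/2+\eps u\in H^2_{\rm loc}\hookrightarrow C^1(\R)$, so that at any finite left endpoint $a$ of a component of $\{u>0\}$ one has $w_x(a)=0$, and a single integration of $w_{xx}=-cu_x-u(1-u)$ forces $(u+\eps)u_x<0$ on that component, contradicting $u(a)=0$. Your route is shorter and exploits the divergence structure directly; the paper's stays entirely at the level of the weak formulation and never invokes the global $C^1$ regularity of $w$. Two small points: in Step~4, the case $u(x^*)=1$ is more cleanly handled by ODE uniqueness for the locally Lipschitz system $(u,u_x)$ than by iterating derivatives; and in $(2)\Rightarrow(1)$ for $\eps=0$, the continuous extension of $uu_x$ across $\omega$ is not automatic from (2) alone but does follow from the standing $H^1_{\rm loc}$ hypothesis (if $uu_x(\omega^-)=L<0$ then $u\sim\sqrt{|L|(\omega-x)}$ and $u_x\notin L^2$ near $\omega$).
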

 We note that   the hypotheses of the lemma and item \eqref{item:Aronson soln}  comprise exactly the definition of solution in~\cite{Aronson1980}.

 We first state and prove the following basic fact that we will use twice in the proof of \Cref{p.pm classical}.
 \begin{lemma}
 \label{lem.pm no min}
 Suppose $0\leq u\leq 1$ is a classical, nonconstant solution of~\eqref{e.pm} on $(a,b)$, for some $a<b$. Suppose  $u$ has a local minimum at $\tilde x \in (a,b)$ and $u(\tilde{x})\neq 1$. Then $u(\tilde{x})=0$.
 \end{lemma}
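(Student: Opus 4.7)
My plan is to expand the PDE \eqref{e.pm} and simply evaluate it at the local minimum $\tilde x$, where the first derivative vanishes and the second is nonnegative. Writing out the nonlinear term, $((u_{\pmeps})_x u_{\pmeps})_x = u_{\pmeps}(u_{\pmeps})_{xx} + ((u_{\pmeps})_x)^2$, the equation \eqref{e.pm} becomes the pointwise identity
\begin{equation}\label{e.pme-expanded}
(u+\eps)\, u_{xx} + c\, u_x + (u_x)^2 + u(1-u) \;=\; 0
\qquad\text{on } (a,b),
\end{equation}
where I have dropped the subscripts to lighten notation.

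\textbf{Main step.} Evaluating \eqref{e.pme-expanded} at the local minimum $\tilde x$, where $u_x(\tilde x)=0$ and $u_{xx}(\tilde x)\geq 0$, gives
\begin{equation}\label{e.atmin}
(u(\tilde x)+\eps)\, u_{xx}(\tilde x) \;=\; -\, u(\tilde x)\bigl(1-u(\tilde x)\bigr).
\end{equation}
The right-hand side is $\leq 0$ because $0\leq u\leq 1$, and I shall argue that the left-hand side is $\geq 0$ unless $u(\tilde x)\in\{0,1\}$. Indeed, if $\eps>0$, then $u(\tilde x)+\eps>0$, so the sign of the LHS agrees with the sign of $u_{xx}(\tilde x)\geq 0$; if $\eps=0$ and $u(\tilde x)>0$, then again $u(\tilde x)+\eps>0$ and the same conclusion holds. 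In either case, \eqref{e.atmin} forces $u_{xx}(\tilde x)=0$ and $u(\tilde x)(1-u(\tilde x))=0$. Combined with the hypothesis $u(\tilde x)\neq 1$, this yields $u(\tilde x)=0$. The only remaining possibility, $\eps=0$ and $u(\tilde x)=0$, already gives the conclusion.

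\textbf{Remarks on hypotheses and obstacles.} The nonconstancy hypothesis and the fact that $u$ is a \emph{classical} solution are only used to justify that \eqref{e.pme-expanded} holds pointwise at $\tilde x$ with $u\in C^2$ near $\tilde x$, so that the standard second-derivative test at an interior local minimum applies. No Hopf- or strong-maximum-principle type argument is required; the proof is just an inspection of signs in \eqref{e.atmin}, so I do not anticipate any genuine obstacle. The step that has to be written with a bit of care is merely the unified treatment of the two subcases ($\eps>0$ versus $\eps=0$), which is handled once one observes that in both cases $u(\tilde x)+\eps>0$ as soon as $u(\tilde x)>0$.
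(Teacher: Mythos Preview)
Your proof is correct and is essentially identical to the paper's own argument: both evaluate \eqref{e.pm} at the interior minimum $\tilde x$, use $u_x(\tilde x)=0$ and $u_{xx}(\tilde x)\geq 0$ to conclude $0\geq u(\tilde x)(1-u(\tilde x))$, and then invoke $u(\tilde x)\neq 1$. Your write-up is slightly more explicit about the case split on $\eps$, but the substance is the same.
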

 \begin{proof} 
 Since $\tilde{x}$ is an interior minimum, we have $u_x(\tilde{x})=0$ and $-u_{xx}(\tilde{x})\leq 0$. Evaluating~\eqref{e.pm} at $\tilde{x}$ then yields,
\be
0\geq u(\tilde x)(1-u(\tilde x)).
\ee
 Since $u(\tilde{x})\neq 1$ and $0\leq u\leq 1$ holds, we find $u(\tilde{x})=0$, as desired.
 \end{proof}

\begin{proof}[Proof of \Cref{p.pm classical}]
If $\eps >0$, elliptic regularity theory implies that $u$ is smooth and is a classical solution to~\eqref{e.pm} and therefore the conclusion of the proposition holds. Thus, we assume $\ep=0$. 

Let $u$ be as in the statement of the lemma. 
We shall establish that $u$ is a distributional solution if and only if item \eqref{item:Aronson soln} holds. 

If $u$ is a solution in the sense of item \eqref{item:Aronson soln}, then the fact that $u$ is a classical solution of~\eqref{e.pm} on the region where $u>0$ immediately yields that $u$ is a distributional solution. 
	
Thus, let us assume that $u$ is a distributional solution of~\eqref{e.pm}. 	 Let
	\be
		\omega = \inf\{ x : u(x) = 0\}.
	\ee
	We remark that our assumption $u(-\infty)=1$ implies $\omega>-\infty$. Fix any $x_0 < \omega$.  By the continuity of $u$ and the fact that $u(-\infty)=1$, we have
	\be
		\inf_{(-\infty, x_0)}u(x) > 0.
	\ee
	Thus, \eqref{e.pm} is a uniformly elliptic equation with $C^{\alpha}$ coefficients on $(-\infty,x_0)$.  Elliptic regularity theory implies that $u \in C^{2,\alpha}(-\infty, x_0)$ and solves~\eqref{e.pm} in the classical sense on $(-\infty,x_0)$.  Since $x_0$ is arbitrary, it follows that $u$ solves~\eqref{e.pm} classically on $(-\infty, \omega)$.

	We shall now establish that $u$ is nonincreasing on $(-\infty, \omega)$. Indeed, suppose not. Then $u$ has a local minimum at some $\tilde{x} \in (-\infty, \omega)$ with $u(\tilde{x})\neq 1$ (here we are using that $u$ is nonconstant, which is true by assumption.) \Cref{lem.pm no min} therefore implies $u(\tilde{x})=0$, contradicting the definition of $\omega$. Therefore, we conclude that $u$ is indeed nonincreasing on $(-\infty, \omega)$.

	Let us once more fix an arbitrary $x_0\in (-\infty, \omega)$. A standard strong maximum principle argument, after differentiating~\eqref{e.pm}, shows that $u_x < 0$ on $(-\infty, x_0)$. Since $x_0$ is arbitrary, it follows that $u$ solves~\eqref{e.pm} classically on $(-\infty, \omega)$ and is strictly decreasing there.  
	
	Now we note that if $\omega = +\infty$, then the proof is complete. So, let us assume $\omega$ is finite. 
We shall now establish that $u\equiv 0$ on $(\omega, \infty)$. To this end, suppose not. Hence, suppose, by way of contradiction, that there is some $x_M>\omega$ such that $u(x_M) > 0$.  Since $u$ cannot have a positive local minimum by \Cref{lem.pm no min}, either $u$ has a global maximum on $(\omega,\infty)$ or $u$ is nondecreasing on $(\omega,\infty)$.  In either case, we may take $x_M$ such that, for all sufficiently small $\mu$,
	\be\label{e.c81702}
		u(x_M) \in (0,1) \qquad\text{ and }\qquad
		u_x \geq 0
			~~\text{ on }~~
		(x_M-\mu, x_M).
	\ee
	As we showed earlier in the proof, we have,
	\be\label{e.c81607bis}
		u > 0 \text{ and } u_x \leq 0
		~~\text{ in }~~
		(-\infty, \omega).
	\ee
	For any $\mu > 0$, take $\psi_\mu$ to be a $C^2$ test function such that
	\be\label{e.c81608}
		\begin{split}
		&\supp(\psi_\mu) \subset [\omega - \mu, x_M]
		\quad
		(\psi_\mu)_x > 0 
			\quad\text{ in } (\omega - \mu, \omega),
		\quad
		\psi_\mu = 1
			\quad\text{ on } [\omega, x_M-\mu]
		\\&
		\text{and }\quad
		(\psi_\mu)_x < 0
			\quad\text{ in } (x_M-\mu, x_M ).
		\end{split}
	\ee
	Applying~\eqref{e.distributional} with this choice of test function $\psi_\mu$ and recalling that $\eps = 0$, we find
	\be\label{e.c81609}
		\begin{split}
			\int &\psi_\mu u (1-u) \dd x
			= c \int u(\psi_\mu)_x \dd x
				+ \int u_x u (\psi_\mu)_x \dd x
			\\&
			= c \int_{\omega-\mu}^{\omega} u (\psi_\mu)_x \dd x
				+ c \int_{x_M-\mu}^{x_M}
					 u (\psi_\mu)_x  \dd x
				+ \int_{\omega-\mu}^{\omega} u_x u (\psi_\mu)_x \dd x
				+ \int_{x_M-\mu}^{x_M} u_x u (\psi_\mu)_x \dd x
			\\&
			\leq c^{3/2} \sqrt \mu
				+ c \int_{x_M-\mu}^{x_M}
u (\psi_\mu)_x  \dd x
				+ \int_{\omega-\mu}^{\omega} u_x u (\psi_\mu)_x \dd x
				+ \int_{x_M-\mu}^{x_M} u_x u (\psi_\mu)_x \dd x
			\\&
			\leq c^{3/2} \sqrt \mu
				+ \int_{\omega-\mu}^{\omega} u_x u (\psi_\mu)_x \dd x
				+ \int_{x_M-\mu}^{x_M} u_x u (\psi_\mu)_x \dd x
			\\&
			\leq c^{3/2} \sqrt \mu
				+ \int_{x_M-\mu}^{x_M} u_x u (\psi_\mu)_x \dd x
			\leq c^{3/2} \sqrt \mu.
		\end{split}
	\ee
	The first inequality above comes from the choice of $\psi_\mu$~\eqref{e.c81608} and the H\"older bound on $u$~\eqref{e.c81607}.  The second inequality comes from the fact that, for $\mu$ sufficiently small, $u>0$ and $(\psi_\mu)_x<0$ on $[x_M-\mu,x_M]$.  The third inequality comes from the monotonicity of $u$ on $(-\infty, \omega)$, as recalled in~\eqref{e.c81607bis}, and the choice of $\psi_\mu$~\eqref{e.c81608}. The final inequality comes from~\eqref{e.c81702} and~\eqref{e.c81608}.

	Thus, we conclude that
	\be
		\int_{\omega}^{x_M} u(1-u) \dd x
			= \lim_{\mu\to0} \int \psi_\mu u(1-u) \dd x
			\leq 0.
	\ee
		The left hand side is strictly positive since $u$ is continuous and $u(x_M) \in (0,1)$.  This is a contradiction and concludes the proof that $u$ is nonincreasing.  This completes the proof.
\end{proof}

Finally, we turn to:

\begin{proof}[Proof of \Cref{t.lower.pm}.(i) using \Cref{t.lower.pm}.(ii)]
Fix $\ep\geq 0$. It is enough to show that for any sequence $(\chi_n, \nu_n)$, with $-\frac{1}{\chi_n}\rightarrow \ep$, $\nu_n\rightarrow 0$,  and corresponding traveling wave solutions $(c_n, u_n, v_n)$ to~\eqref{e.tw}, satisfy the lower bound
\be
	\liminf_{n\to\infty} c_n \geq c_{\pmeps}^*.
\ee
Thus, consider such a sequence. If 	$\liminf_{n\to\infty} c_n = \infty$, 
then we are finished.  Hence, we may assume that the limit inferior of $c_n$ is finite and, up to passing to a subsequence, we may assume that $c_n$ converges to it.  
In other words, we assume that there is $c \in [0,\infty)$ such that
\be
	\lim_{n\to\infty} c_n = c.
\ee
Thus, we are in the setting of \Cref{t.lower.pm}.(ii), and hence obtain a subsequence $(c_{n_k}, u_{n_k}, v_{n_k})$ such that $(c_{n_k}, u_{n_k})$ converges to a distributional solution $(c,u)$ of~\eqref{e.pm}. If $\ep>0$, then the results of \cite{KawasakiShigesadaIinuma} imply $c\leq c^*_{\pmeps}$. And, in the case $\ep=0$,   the results of \cite{Aronson1980}, together with \Cref{p.pm classical}, imply  $c\leq c^*_{\pmeps}$. Thus the proof is complete.
\end{proof}

\section{The hyperbolic scaling regime: the structure of $\cZ$, monotonicity, exponential decay, and the lower bound on the speed}\label{s.general.hyp}

In this section, we deduce several general facts that hold for any solution to~\eqref{e.hyp}.  These are used in several places throughout the sequel, so it is convenient to establish them here, although many proofs are postponed to \Cref{s.technical} due to their length and complexity.

\ssection{The structure of $\cZ$ and the behavior of $u$ on $\cZ^c$}

\ssection{Monotonicity of $u$ for the hyperbolic model}

We state a monotonicity lemma that applies also to the hyperbolic model (notice that \Cref{lem:Chris monot} is stated only for~\eqref{e.tw}). 
\begin{lemma}\label{lem:monotonicity.hyp}
	Suppose that $(c,u,v)$ is a traveling wave solution to~\eqref{e.hyp}.  Let
	\be
		x_d = \inf \left\{x : u(x) < \frac{2}{2+\sfrac{1}{\nu}}\right\}.
	\ee
	Then $u$ is nonincreasing on $(x_d, \infty)$.

\end{lemma}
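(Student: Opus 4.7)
The plan is to rule out any maximal interval of strict increase of $u$ in the region where $u > 0$; combined with the boundary conditions $u(\pm\infty) \in \{1, 0\}$ and $0 \le u \le 1$, this forces the asserted monotonicity.  First, by \Cref{prop:cardZleq1}, either $\cZ = \varnothing$ and $u \in C^\infty(\R)$, or $\cZ = \{x_0\}$ with $\{u > 0\} = (-\infty, x_0)$; in the latter case $u \equiv 0$ on $(x_0, \infty)$, so only the statement on $(x_d, x_0)$ is nontrivial.  In either scenario it suffices to work on the connected component $I \subset \cZ^c$ with $u > 0$: namely $I = \R$ in case~(1) and $I = (-\infty, x_0)$ in case~(2).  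On $I$, $u$ classically satisfies~\eqref{e.hyp2}, and $c + v_x > 0$ throughout $I$ since this quantity is continuous, nonzero by the definition of $\cZ$, and tends to $c > 0$ as $x \to -\infty$.

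Assume for contradiction that there exists $x^* \in I \cap (x_d, \infty)$ with $u_x(x^*) > 0$, and let $(p, q) \subset I$ be the maximal interval containing $x^*$ on which $u_x > 0$.  Using $u(-\infty) = 1$ and $u \le 1$, $p$ cannot equal $-\infty$, so $u_x(p) = 0$; meanwhile, either $q \in I$ (a local maximum) or $q = x_0$ (the right endpoint of $I$ in case~(2)).  Rearranging~\eqref{e.hyp2} as $(c + v_x)\, u_x = \tfrac{\nu+1}{\nu}\, u\, g$ with $g(x) := u(x) - (\nu + v(x))/(\nu + 1)$, we see that $u_x$ and $g$ share a sign on $I$, and that at any zero of $u_x$ either $u = 0$ or $g = 0$.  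The possibility $u(p) = 0$ is excluded by ODE uniqueness for~\eqref{e.hyp2}, which would propagate $u \equiv 0$ across $I$ and contradict $u(-\infty) = 1$.  Hence $g(p) = 0$ and, using the explicit jump value $u(x_0^-) = (\nu + v(x_0))/(\nu + 1)$ from \Cref{prop:cardZleq1} when $q = x_0$, also $g(q^-) = 0$.

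The contradiction comes from analyzing $g$ on $[p, q]$.  Since $u_x > 0$ on $(p, q)$, also $g > 0$ there.  Because $v < 1$ on $\R$ (as $u \not\equiv 1$ and $v = \cK_\nu * u$ with $\cK_\nu > 0$), we get $u > (\nu + v)/(\nu + 1) > v$ on $(p, q)$, so $\nu v_{xx} = v - u < 0$: $v$ is \emph{strictly concave} on $[p, q]$ and $v_x$ is strictly decreasing there.  On the other hand, $g(p) = 0$, $g > 0$ just to the right, and $u_x(p) = 0$ force $g_x(p) = -v_x(p)/(\nu+1) \ge 0$, so $v_x(p) \le 0$.  When $q \in I$, symmetric analysis gives $v_x(q) \ge 0$, incompatible with the strict decrease of $v_x$ on $[p, q]$.

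The main obstacle, and the reason this lemma must wait for \Cref{prop:cardZleq1}, is the case $q = x_0$, where the endpoint analysis requires care because $x_0 \in \cZ$.  Here the plan is to use the identity $v_x(x_0) = -c$ directly: the strict decrease of $v_x$ from $v_x(p) \le 0$ forces $v_x < 0$ on all of $(p, x_0]$, so $v$ is strictly decreasing there and $v(p) > v(x_0)$; however, $u(p) < u(x_0^-)$ combined with $u = (\nu + v)/(\nu + 1)$ at both endpoints forces $v(p) < v(x_0)$, the desired contradiction.  Having excluded any such $(p, q)$, $u$ must be unimodal, and combining this with $u(-\infty) = 1$ and $u \le 1$ yields the asserted monotonicity on $(x_d, \infty)$.
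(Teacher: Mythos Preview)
Your proof is correct and takes a genuinely different route from the paper's. The paper follows the argument of \cite[Lemma~2.3]{Henderson2021}: one assumes a leftmost local minimum below the threshold $\tfrac{2}{2+1/\nu}$, reads off $u = (\nu+v)/(\nu+1)$ at that point from~\eqref{e.hyp2}, and then uses the kernel representation $v = \cK_\nu * u$ together with the lower bound $u \ge \tfrac{2}{2+1/\nu}$ to the left of that minimum to force $v$ (hence $u$) back above the threshold, a contradiction. The specific value of the threshold is exactly what makes that bookkeeping close up.

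Your argument instead works locally on a maximal interval of increase $(p,q)$: you exploit the sign relation $(c+v_x)\,u_x = \tfrac{\nu+1}{\nu}\,u\,g$ to pin down $g(p)=g(q^-)=0$, then use $g>0 \Rightarrow u>v \Rightarrow v_{xx}<0$ to make $v_x$ strictly decreasing, and finally compare the one-sided derivatives of $g$ at the endpoints to force incompatible signs of $v_x$ (or of $v$ itself when $q=x_0$). This never touches the threshold and in fact proves the stronger statement that $u$ is nonincreasing on all of $I=\{u>0\}$, not just past $x_d$. The trade-off is that you lean more heavily on \Cref{prop:cardZleq1} (for $c+v_x>0$ on $I$ and for the explicit jump value at $x_0$), whereas the paper's argument, once the structure of $\cZ$ is known, transfers verbatim from the parabolic setting. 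One minor wording point: your final ``unimodal'' is really ``monotone nonincreasing''---you have ruled out \emph{every} interval of increase, not merely all but one.
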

\begin{proof}
We consider two cases based on \Cref{prop:cardZleq1}.  First, when $\cZ = \emptyset$, $u$ and $v$ are smooth.  In this case, the argument of \Cref{lem:Chris monot} can be repeated verbatim.  It is sketched below (see the final paragraph of this proof).

Next consider the case when $\cZ = \{x_0\}$ for some $x_0$.  There is nothing to prove on $(x_0,\infty)$ as $u \equiv 0$ on that domain.  Additionally, recalling \Cref{prop:cardZleq1}, we have $u(x_0^-) > u(x_0^+)$.

On the other hand, the proof that $u$ is nonincreasing on $(x_d, x_0)$ is exactly the same as \Cref{lem:Chris monot}, so we only sketch it briefly.  It is proved by contradiction.  Take the leftmost local minimum lying below $\sfrac{2}{(2+\nu)}$.  Then, due to~\eqref{e.hyp}, $u = \sfrac{(\nu + v)}{(\nu+1)}$ at this point.  On the other hand, $u \geq \sfrac{2}{(2+\nu)}$ to the left of this point.  Using~\eqref{e.kernel}, this implies that $v > \sfrac{1}{(2+\nu)}$ at this point, which, in turn, yields $\sfrac{(\nu + v)}{(\nu+1)} > \sfrac{2}{(2+\nu)}$ at this point, a contradiction.  The only wrinkle in this context is the possible discontinuity of $u$, but this is avoided by the domain restriction assumption: $x < x_0$.  As such, we omit the details and refer the interested reader to \cite[Lemma~2.3]{Henderson2021}.
\end{proof}

\ssection{Quantified exponential decay}

A key part of our work in \Cref{p.speed.hyp} and \Cref{t.upper.hyp} is a quantitative exponential decay bound that we state below.  For convenience, up to translation, we use the normalization~\eqref{e.normalization.hyp}, so that
\be\label{e.c8027}
	u (0) \leq \frac{\nu}{\nu + 1}.
\ee
We note that, importantly, the proof below has the advantage of applying equally to three different settings, one of which is `slab problem,' where $(u,v)$ satisfies~\eqref{e.tw} only on the finite interval $[-L,L]$ (see \Cref{s.upper} and, more specifically, \eqref{eq:topologicaldegree-defFtau}).  In this case, we must specify which choice of $v$ we use  since boundary data is not imposed. We take the solution $v$ defined by
\be\label{e.c080101}
	 v(x) = \cK_\nu * \bar u
	 \quad\text{ where }~~
	 \bar u(x) = \begin{cases}
	 			1 \quad&\text{ for } x \leq -L,\\
				u(x) \quad&\text{ for } x \in [-L,L],\\
				0 \quad&\text{ for } x \geq L.
	 		\end{cases}
\ee
The result is the following:
\begin{proposition}
	\label{lem:exp decay}
	Fix any $\chi_0<0$,  $\nu_M>{\nu}_m>0$ and $C_M>0$.  Suppose that $c \in [0,C_M]$, $\nu \in [\nu_m, \nu_M]$, and $u$ satisfies~\eqref{e.c8027}.  Assume one of the three settings below: either
	\begin{enumerate}[(i)]
		\item $\chi \leq \chi_0$ and $(c,u,v)$ solves~\eqref{e.tw} on $\R$ with $u(+\infty) = 0$, or
		\item $(c,u,v)$ solves~\eqref{e.hyp} on $\R$ with $u(+\infty) = 0$, or
		\item $(c,u,v)$ solves~\eqref{e.tw} on $[-L,L]$ for some $L>0$ with $u(-L) = 1$, $u(L)=0$, and  $u(x)\equiv 0 $ for $x\geq L$ and $v$ satisfies~\eqref{e.c080101}.
	\end{enumerate}
	Then
	\be\label{e.exponential_decay}
	u(x)
	\leq C u(0) e^{- \theta x} \text{ for all $x\geq 0$},
	\ee
	where $C$ and $\theta$ depend only on $\chi_0$, ${\nu}_M$, ${\nu}_m$, and $C_M$. 
\end{proposition}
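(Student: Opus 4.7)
The plan is to reduce \Cref{lem:exp decay} to a single \textbf{drop-by-proportion} estimate: there exist $R>0$ and $\lambda\in(0,1)$, depending only on $(\chi_0,\nu_m,\nu_M,C_M)$, such that
\begin{equation*}
u(x+R) \leq \lambda\, u(x) \qquad \text{for every } x \geq 0.
\end{equation*}
Iterating this along the arithmetic progression $\{kR\}_{k\geq 0}$ gives $u(kR) \leq \lambda^k u(0)$, and interpolation between consecutive grid points using $u \leq 1$ then produces~\eqref{e.exponential_decay} with $\theta = -\log\lambda/R$ and $C = 1/\lambda$.

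The starting point is monotonicity. Since $u(0) \leq \nu/(\nu+1) < 2/(2+\sfrac{1}{\nu})$, \Cref{lem:Chris monot} in setting (i), \Cref{lem:monotonicity.hyp} in setting (ii), and a direct adaptation in setting (iii) yield that $u$ is nonincreasing on $[0,\infty)$. Combined with~\eqref{e.v'}, this forces $v_x \leq 0$ and hence $|c+v_x|\leq C_M$ on $[0,\infty)$. To prove the drop estimate, fix $x\geq 0$, set $M := u(x)$ (assuming $M>0$, else nothing to prove), and suppose for contradiction that $u(x+R) > \lambda M$. Monotonicity gives $u(y) \in [\lambda M, M]$ on $[x, x+R]$, and the explicit tail identity $\int_a^\infty \cK_\nu(z)\,\dd z = \tfrac{1}{2}e^{-a/\sqrt\nu}$ yields, uniformly for $y$ in the middle third $[x+R/3, x+2R/3]$,
\begin{equation*}
|v(y) - u(y)| \leq (1-\lambda)M + e^{-R/(3\sqrt{\nu_M})}, \qquad |v_x(y)| \leq \frac{(1-\lambda)M + e^{-R/(3\sqrt{\nu_M})}}{2\sqrt{\nu_m}}.
\end{equation*}

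The heart of the argument is to test the first equation of~\eqref{e.hyp} (respectively of~\eqref{e.tw}) against a $C^2$ cutoff $\phi$ supported in $[x+R/3, x+2R/3]$, equal to $1$ on $[x+5R/12, x+7R/12]$, with $\int|\phi_x|\leq 2$ and $\int|\phi_{xx}|\leq C_0/R$ for a universal $C_0$. After integrating by parts one obtains
\begin{equation*}
\int u(1-u)\phi\, \dd y = c\int u\, \phi_x\, \dd y + \int v_x u\, \phi_x\, \dd y - \frac{1}{|\chi|}\int u\, \phi_{xx}\, \dd y,
\end{equation*}
the last term being absent in setting (ii). Since $u \geq \lambda M$ and $1-u \geq 1-M \geq 1/(\nu_M+1)$ on $\{\phi = 1\}$, the left side is bounded below by $\lambda MR/(6(\nu_M+1))$. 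The three terms on the right are bounded, respectively, by $C_M(1-\lambda)M$ (integrating $c\int u\,\phi_x = -c\int u_x\,\phi$ by parts and using $\int|u_x|\phi \leq u(x+R/3)-u(x+2R/3) \leq (1-\lambda)M$), by $M[(1-\lambda)M + e^{-R/(3\sqrt{\nu_M})}]/\sqrt{\nu_m}$ (from the pointwise control of $v_x$ above), and by $C_0 M/(R|\chi_0|)$. Fixing $\lambda = 1/2$ and taking $R$ sufficiently large in terms of the data forces the left side to strictly exceed the right, producing the desired contradiction.

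The main obstacle is the $u_{xx}/|\chi|$ term, since no uniform bound on $u_{xx}$ is available as $\chi \to -\infty$; this is precisely why I integrate against a smooth cutoff $\phi$ rather than work pointwise, as the resulting $\int|\phi_{xx}|$ scales as $1/R$ and yields a remainder dominated by the linear-in-$R$ main term. For setting (iii), the extension $\bar u$ in~\eqref{e.c080101} is monotone on all of $\R$ as soon as $u$ is monotone on $[-L,L]$, so every convolution bound on $v$ and $v_x$ carries over unchanged; the case $x + R > L$ is automatic because $u$ vanishes beyond $L$, while for $x + 2R/3 \leq L$ the test function $\phi$ is supported inside $(-L,L)$ and the argument above applies verbatim.
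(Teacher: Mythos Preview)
Your proof is correct and shares the paper's overall architecture: establish monotonicity on $[0,\infty)$, prove a uniform drop-by-proportion estimate $u(x+R)\leq\lambda\,u(x)$, and iterate.  The difference lies entirely in how the drop estimate is obtained.  The paper works pointwise: it introduces a ``flat set'' $F=\{x:\,u_x(x)>-2\mu u(x_0)/(A\sqrt\nu)\}$, shows $|F|$ is large, uses~\eqref{e.tw2} to bound $u_{xx}$ pointwise on $F$, and then feeds this into a second-order Taylor expansion of $u$ over $[x_0,x_0+A\sqrt\nu]$ to reach a contradiction.  You instead test the equation against a smooth cutoff $\phi$, so the troublesome term $\tfrac{1}{|\chi|}u_{xx}$ becomes $\tfrac{1}{|\chi|}\int u\,\phi_{xx}$, which is controlled trivially by $\|u\|_{L^\infty}\int|\phi_{xx}|\lesssim 1/R$.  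This is considerably cleaner: it sidesteps the flat-set bookkeeping and the Taylor remainder analysis, at the cost of being slightly less explicit about constants.  Two minor remarks: the assertion $|c+v_x|\leq C_M$ is neither correct as stated nor used anywhere in your argument, so you should simply delete it; and the phrase ``using $u\leq 1$'' in the iteration step should read ``using monotonicity,'' since that is what passes from $u(kR)\leq\lambda^k u(0)$ to the bound at an arbitrary $x\in[kR,(k+1)R]$.
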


Let us point out the content of the above result.  Working on (at least) the half-line $[0,\infty)$, standard ODE theory tells us that $u$ decays exponentially {\em eventually}.  It, however, does not tell us {\em when} the exponential decay `kicks in.'  This leaves open the possibility of a sequence of traveling waves $(c_n, u_n, v_n)$ of~\eqref{e.hyp} such that $u_n$ is `nearly constant' and $O(1)$ on $[0,n]$ for a large $n$ before $u_n$ begins decaying exponentially to zero.  The above proposition rules this out.  This is crucial in the proof of the lower bound~\eqref{e.c81209} in \Cref{p.speed.hyp}.

As the proof is quite long and technical, it is postponed to \Cref{s.technical}.

\ssection{General bounds on the traveling wave speed: \Cref{p.speed.hyp}}\label{s.speed.hyp}

We now prove the bounds on the minimal speed $c_{\rm hyp}^*$.

\begin{proof}[Proof of \Cref{p.speed.hyp}]
We consider two cases depending on the structure of $\cZ$.

\smallskip
\noindent{\bf Case one: $\cZ = \{x_0\}$.} 
First, we show the upper bound as it is very simple.  
By \Cref{prop:cardZleq1},  
$u(x) = 0$ for all $ x> x_0$.  We thus note that
\be
	\begin{split}
		c
			&= - v_x(x_0)
			= \frac{1}{2\nu} \int \sign(y) e^{-\frac{|y|}{\sqrt\nu}} u(x_0-y) dy
			= \frac{1}{2\nu} \int_0^\infty e^{-\frac{y}{\sqrt\nu}} u(x_0-y) dy
			\\&
			\leq \frac{1}{2\nu} \int_0^\infty e^{-\frac{y}{\sqrt\nu}} dy
			= \frac{1}{2\sqrt\nu}.
	\end{split}
\ee
For future use, we point out that, from the third inequality and the fact that $u \equiv 0$ on $(x_0,\infty)$, we have
\be\label{e.c81605}
	c = -v_x(x_0) = \frac{1}{\sqrt\nu} v(x_0).
\ee

	Next, we establish a lower bound for $v(x_0)$.  
By definition of $\cZ$ (\Cref{def:hyp}) and by \Cref{prop:cardZleq1}, we find
\be\label{e.c72902}
	v_x(x_0) = - c,
	\quad
	u(x_0^-) = \frac{\nu + v(x_0)}{\nu + 1},
	\quad\text{ and }\quad
	u(x_0^+) = 0.
\ee
By \Cref{lem:monotonicity.hyp},
\be\label{e.c81204}
	u(x) \geq \min\left\{\frac{2}{2+\sfrac{1}{\nu}},\frac{\nu + v(x_0)}{\nu + 1}\right\}
		\qquad\text{ for all } x < x_0.
\ee
We claim that
\be\label{e.c72903}
	v(x_0) \geq \frac{\nu}{2\nu + 1}.
\ee
If this were not true then the minimum in~\eqref{e.c81204} is given by
\be
	u(x) \geq \frac{\nu + v(x_0)}{\nu + 1}.
\ee
It then follows that
\be
	v(x_0)
		= \int_{-\infty}^{x_0} \frac{e^{-|x_0 - y|/\sqrt\nu}}{2\sqrt\nu} u(y) dy
		\geq \int_{-\infty}^{x_0} \frac{e^{-|x_0 - y|/\sqrt\nu}}{2\sqrt\nu} \frac{\nu + v(x_0)}{\nu + 1} dy
		= \frac{1}{2} \frac{\nu + v(x_0)}{\nu + 1},
\ee
which implies that~\eqref{e.c72903} does in fact hold.

On the other hand, using~\eqref{e.c72902}
\be\label{e.c72901}
	v(x_0)
		= \int_{-\infty}^{x_0} \frac{e^{-|x_0 - y|/\sqrt\nu}}{2\sqrt\nu} u(y) dy
		= -\sqrt\nu \int_{-\infty}^{x_0} \Big(\frac{e^{-|x_0 - y|/\sqrt\nu}}{2\sqrt\nu}\Big)_x u(y) dy
		= - \sqrt \nu v_x(x_0)
		= \sqrt \nu c.
\ee
Putting together~\eqref{e.c72902} and~\eqref{e.c72901}, we find
\be
	c \geq \frac{\sqrt\nu}{2\nu+1}.
\ee 
This concludes the proof in the case where $\cZ = \{x_0\}$.

\smallskip
\noindent
{\bf Case two: $\cZ$ is empty.}
We now consider the case when $\cZ = \emptyset$.  In this case,
\be\label{e.c72905}
	\sup(-v_x) \leq c.
\ee
Indeed, $-v_x \neq c$ on $\R$ since $\cZ =\emptyset$ and, since $v(-\infty) = 1$ and $v(+\infty) = 0$, it cannot be that $-v_x > c$ on $\R$.

If $c > 1$, the proof is finished.  Hence, we consider only the case where $c < 1$.   We first note that, up to translation, the normalization~\eqref{e.c8027} in conjunction with the monotonicity of $u$ (recall Lemma \ref{lem:monotonicity.hyp}) yield
\be\label{e.c80204}
	u(x) \geq \frac{\nu}{\nu+ 1} \quad\text{ when } x < 0
	\qquad\text{ and }\qquad
	u(x) \leq \frac{\nu}{\nu+ 1} \quad\text{ when } x > 0.
\ee
Next, let $C$ and $\theta$ be as in~\eqref{e.exponential_decay} and take
\be\label{e.c72904}
	L= \max\left\{0, \frac{1}{\theta} \log\left( \frac{2C}{1+\theta\sqrt{\nu}}\right)\right\}.
\ee
Notice that $C$ and $\theta$ depend only on $\nu$ as we are working under the assumption that $c<1$.  Hence $L$ depends only on $\nu$.

Using the expression~\eqref{e.v'}, followed by the exponential decay~\eqref{e.exponential_decay} and ~\eqref{e.c80204}, yields,
\begin{align}
	-\sqrt \nu v_x(0)
	&= \int_{-\infty}^0 \frac{e^{-\frac{|y|}{\sqrt \nu}}}{2 \sqrt \nu} u(y) dy
		- \int_0^\infty \frac{e^{-\frac{|y|}{\sqrt \nu}}}{2 \sqrt \nu} u(y)  dy
	\\&\geq
		 \frac{\nu}{\nu+ {1}}\int_{-\infty}^0 \frac{e^{-\frac{|y|}{\sqrt \nu}}}{2 \sqrt \nu} dy
		- \frac{\nu}{\nu+ {1}}\int_0^L \frac{e^{-\frac{|y|}{\sqrt \nu}}}{2 \sqrt \nu} dy
		- \frac{C\nu}{\nu+ {1}} \int_L^\infty \frac{e^{-\frac{|y|}{\sqrt \nu}}}{2 \sqrt \nu} e^{- \theta y} dy.
\end{align}
Carrying out the integration and simplifying yields,
\begin{align}
	- \sqrt \nu v_x(0)
	&\geq
	\frac{\nu e^{\frac{-L}{\sqrt{\nu}}}}{2(\nu+1)}
	- \frac{C\nu}{\nu+1} \frac{1}{2(1 + \theta\sqrt\nu)} e^{- \left(\frac{1}{\sqrt \nu} + \theta\right) L} \\ 
	& = \frac{\nu}{\nu+1} e^{-\frac{L}{\sqrt \nu}} 	
	\left(\frac{1}{2} - \frac{C}{2(1+\theta\sqrt{\nu})}e^{-\theta L}\right).
\end{align}
Our choice of $L$~\eqref{e.c72904} then implies that
\be
	- v_x(0)
		\geq \frac{\nu}{4 (\nu+1)} e^{-L/\sqrt\nu}
		= \frac{\nu}{4(\nu + 1)} \min\left\{1, \Big(\frac{1+\theta\sqrt\nu}{2 C} \Big)^{\theta}{\sqrt\nu}\right\}.
\ee
Recalling~\eqref{e.c72905} finishes the proof of case two.
\end{proof}

\section{The hyperbolic scaling regime: proof of \Cref{t.lower.hyp}}
\label{s.lower.hyp}

With the general results of the previous section in hand, we may now prove \Cref{t.lower.hyp}; that is, for any sequence $(c_n, u_n, v_n)$ solving~\eqref{e.tw} with $\chi_n \to -\infty$ and $\nu_n\to \nu > 0$, we have that
\be
	\liminf_{n\to\infty} c_n \geq c_{\hypnu}^*.
\ee
Notice that the normalization~\eqref{e.normalization.hyp}, along with \Cref{lem:Chris monot} and the fact that $u_n(-\infty) = 1$ and $u_n(+\infty) =0$ yields
\be\label{e.normalization2}
	\inf_{x\leq 0} u_n(x)
		= \delta
		= \sup_{ x\geq 0}  u_n(x).
\ee
Moreover, by \Cref{lem:exp decay}, there are $C, \theta >0$, independent of $n$, such that
\be
	u_n(x) \leq C e^{-\theta x}
		\qquad\text{ for all } x > 0.
\ee
We use these inequalities in order to guarantee the nontriviality of a limit.

We now begin the proof.

\begin{proof}[Proof of \Cref{t.lower.hyp}]
As in the proof of \Cref{t.lower.pm}, it suffices to prove \Cref{t.lower.hyp}.(ii) as the claim (i) follows by simply taking a subsequence $c_{n_k}$ that realizes the limit inferior.  Up to the extraction of a subsequence, $u_n \to u$ and $v_n \to v$ weak-$*$ in $L^\infty$ and $W^{2,\infty}$, respectively, and $c_n \to c \geq 0$.    
It immediately follows $v$ that solves the second line of \eqref{e.hyp} weakly.

We have the following two results, whose proofs we postpone until \Cref{ss.lemmas.hyp}.  The first is regarding the smoothness of $u$ on $\cZ^c$:
\begin{lemma}\label{l.hyp_n_bounds}
In the setting above, for every $k \in \N$, $x_0 \in \cZ^c$, and $\mu>0$ such that $(x_0- \mu, x_0 + \mu) \subset \cZ^c$, there is $n_0$ and $C$ such that
\be
	\|u_n\|_{C^k(x_0-\sfrac{\mu}{2},x_0+\sfrac{\mu}{2})} \leq C
		\qquad\text{ for all } n \geq n_0.
\ee
The constant $C$ depends only on $k$, $x_0$, $\mu$, $c$, and $v$.  The constant $n_0$ also depends on these parameters and, additionally, the convergence rate of $|c_n - c|$ and $\|v_n - v\|_{W^{1,\infty}}$.
\end{lemma}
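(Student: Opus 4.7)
The plan is to view~\eqref{e.tw2} on $(x_0 - \mu, x_0 + \mu)$ as the singularly perturbed linear ODE
\[
\epsilon_n u_n'' + a_n(x) u_n' + g_n(x) = 0, \qquad \epsilon_n := \frac{1}{|\chi_n|},
\]
with $a_n(x) := c_n + (v_n)_x(x)$ and $g_n := u_n\bigl(\frac{\nu_n + v_n}{\nu_n} - \frac{\nu_n+1}{\nu_n} u_n\bigr)$, and to extract a uniform $W^{1,\infty}$ bound via an integrating factor that exploits the nondegeneracy of the first-order term on $\cZ^c$. Standard Schauder estimates fail because the ellipticity constant $\epsilon_n$ degenerates, and this is the main obstacle; the higher $C^k$ bounds will then follow by bootstrapping.

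To set up uniform positivity of $a_n$, I would use the bound $\|v_n\|_{W^{2,\infty}}\leq 1/\nu_n \leq 1/\nu_m$ together with Arzel\`a–Ascoli to promote the weak-$*$ convergence $v_n \to v$ of \Cref{t.lower.hyp} to $C^1_{\rm loc}$ strong convergence. Since $c + v_x$ is continuous and nonvanishing on the connected interval $(x_0 - \mu, x_0 + \mu) \subset \cZ^c$, it has constant sign there by the intermediate value theorem; assume without loss of generality $c + v_x \geq \alpha > 0$ on the compact sub-interval $[x_0 - 3\mu/4, x_0 + 3\mu/4]$, so that $a_n \geq \alpha/2$ on this set for all $n \geq n_0$, where $n_0$ depends on the convergence rates of $c_n$ and $(v_n)_x$. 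Also $\|g_n\|_{L^\infty} \leq C$ by~\eqref{e.0<u<1} and the lower bound $\nu_n \geq \nu_m$.

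For the crucial $W^{1,\infty}$ step, the mean value theorem, applied to the uniformly bounded $u_n$ on $[x_0 - 3\mu/4, x_0 - 5\mu/8]$, supplies a basepoint $y_n$ in that sub-interval with $|u_n'(y_n)| \leq 16/\mu$. Setting $A_n(x) := \int_{y_n}^x a_n(s)\,ds$, the equation becomes $(e^{A_n/\epsilon_n} u_n')' = -(g_n/\epsilon_n)\, e^{A_n/\epsilon_n}$, so that for $x > y_n$,
\[
u_n'(x) = u_n'(y_n)\, e^{-A_n(x)/\epsilon_n} - \int_{y_n}^x \frac{g_n(y)}{\epsilon_n}\, e^{(A_n(y) - A_n(x))/\epsilon_n}\, dy.
\]
For $x \in [x_0 - \mu/2, x_0 + \mu/2]$ and $y \in [y_n, x]$ one has $A_n(x) \geq 0$ and $A_n(y) - A_n(x) \leq -\alpha(x-y)/2$; these two inequalities bound the first term by $16/\mu$ and, after a direct integration, the second by $2\|g_n\|_{L^\infty}/\alpha$, producing a $\chi_n$-independent bound on $\|u_n'\|_{L^\infty(x_0-\mu/2, x_0+\mu/2)}$. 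The symmetric case $c + v_x < 0$ is handled by integrating from a basepoint on the right.

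Finally, to bootstrap to $C^k$, I would differentiate the equation $j$ times to obtain an ODE of the same singularly perturbed form for $u_n^{(j+1)}$, with forcing involving only $u_n^{(\leq j)}$ and $(v_n)^{(\leq j+1)}$. The derivatives of $v_n$ are controlled inductively through $(v_n)^{(j+2)} = ((v_n)^{(j)} - u_n^{(j)})/\nu_n$, obtained by differentiating the second equation of~\eqref{e.tw}. At each step, a mean value theorem argument on a slightly shrunken interval supplies the basepoint, and the same integrating-factor estimate yields a uniform bound on $u_n^{(j+1)}$. Iterating $k$ times, with the working interval shrunk by $\mu/(4k)$ at each step, produces the claimed bound on $(x_0 - \mu/2, x_0 + \mu/2)$.
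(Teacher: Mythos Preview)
Your argument is correct and complete. It differs from the paper's in the core $W^{1,\infty}$ step: the paper argues by a maximum-principle trick, taking $\xi$ to be the location of $\max_{[\xi_\ell,\xi_r]}|(u_n)_x|$ (with $\xi_\ell,\xi_r$ obtained via the mean value theorem exactly as you do) and observing that at an interior extremum of $(u_n)_x$ one has $(u_n)_{xx}(\xi)=0$, so~\eqref{e.tw2} directly gives $|(u_n)_x(\xi)|\le \delta^{-1}\|g_n\|_{L^\infty}$ with no integration at all. Your integrating-factor/Duhamel approach instead solves the singularly perturbed first-order ODE for $u_n'$ explicitly and exploits the exponential weight $e^{A_n/\epsilon_n}$ to kill the $1/\epsilon_n$ in the forcing. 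Both rely on the same two inputs (nondegeneracy of $c_n+(v_n)_x$ on the interval and an MVT basepoint), and both bootstrap identically. The paper's route is a bit shorter and sign-agnostic, while yours is more constructive and makes the mechanism behind the uniform-in-$\epsilon_n$ bound fully transparent; in particular, your estimate shows exactly how the bound degenerates as $\alpha\to 0$, which matches the paper's $1/\delta$ dependence.
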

\begin{lemma}\label{l.Z_discrete}
In the setting above, $(c,u,v)$ satisfy~\eqref{e.c81501}. 
\end{lemma}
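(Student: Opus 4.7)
The plan is to pass to distributional limits in both equations of~\eqref{e.tw}, use the second equation to identify $u$ with $v$ on $\Int(\cZ)$, exploit the first equation to show that the weak-$*$ limit of $u_n(1-u_n)$ vanishes on $\Int(\cZ)$, and finally upgrade this to the pointwise identity $u(1-u)=0$ via the partial monotonicity provided by \Cref{lem:Chris monot} combined with the normalization~\eqref{e.normalization.hyp}.

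First I would extract a subsequence so that $c_n\to c$, $u_n\to u$ weak-$*$ in $L^\infty$, and $v_n\to v$ in $C^1_{\mathrm{loc}}$ (using the uniform $W^{2,\infty}$ bound and Arzelà--Ascoli). The second equation of~\eqref{e.tw} passes to the limit as $-\nu_{\hypnu} v_{xx}=u-v$ a.e. Since $v_x\equiv-c$ on $\cZ$ and $v\in W^{2,\infty}$, it follows that $v_{xx}=0$ a.e.\ on $\Int(\cZ)$, and therefore $u=v$ a.e.\ on $\Int(\cZ)$. Testing the first equation against $\phi\in C_c^\infty(\Int(\cZ))$, integrating by parts, and passing to the limit using strong $C^0_{\mathrm{loc}}$ convergence of $(v_n)_x$, weak-$*$ convergence of $u_n$, and $|\chi_n|^{-1}\to 0$ yields
\[
  \int (c+v_x)\, u\, \phi_x \dd x = \int \phi\, \overline{u(1-u)}\dd x,
\]
where $\overline{u(1-u)}$ denotes the weak-$*$ $L^\infty$ limit of $u_n(1-u_n)$, extracted along a further subsequence. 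Since $\phi$ is supported on $\Int(\cZ)$ and $c+v_x=0$ there, the left-hand side vanishes, so $\overline{u(1-u)}=0$ a.e.\ on $\Int(\cZ)$. Because $u_n(1-u_n)\geq 0$, weak-$*$ convergence to $0$ upgrades to $L^1_{\mathrm{loc}}$ convergence, and along a diagonal subsequence $u_n(1-u_n)\to 0$ pointwise a.e.\ on $\Int(\cZ)$.

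The main obstacle is that pointwise vanishing of $u_n(1-u_n)$ together with weak-$*$ convergence of $u_n$ does not by itself imply $u(1-u)=0$: for instance, $u_n=\mathbf{1}_{\{\sin(nx)>0\}}$ satisfies $u_n(1-u_n)\equiv 0$ yet $u_n\to 1/2$ weak-$*$. To rule out such oscillation I would exploit the partial monotonicity of $u_n$. The normalization $u_n(0)=\delta\in(0,\nu_{\hypnu}/(\nu_{\hypnu}+1))$ and \Cref{lem:Chris monot} together guarantee, for $n$ large, that (i) $u_n$ is nonincreasing on $(0,\infty)$ and (ii) $u_n\geq \delta$ on $(-\infty,0]$. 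On $(0,\infty)$, Helly's selection theorem applied to the uniformly bounded, monotone family $u_n$ yields a subsequence converging pointwise a.e.\ to a nonincreasing function, which by uniqueness of the weak-$*$ limit must agree with $u$ a.e.; combined with $u_n(1-u_n)\to 0$ a.e., this forces $u(1-u)=0$ a.e.\ on $\Int(\cZ)\cap (0,\infty)$. On $(-\infty,0]$, the bound $u_n\geq \delta$ gives $1-u_n\leq u_n(1-u_n)/\delta\to 0$ a.e., so $u_n\to 1$ pointwise a.e.\ on $\Int(\cZ)\cap(-\infty,0]$, and by dominated convergence together with uniqueness of the weak-$*$ limit, $u\equiv 1$ a.e.\ there; in particular $u(1-u)=0$ a.e.

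Combining both cases yields $u(1-u)=0$ a.e.\ on $\Int(\cZ)$. Using $u=v$ a.e.\ on $\Int(\cZ)$, the algebraic identity $u\big(\tfrac{\nu_{\hypnu}+v}{\nu_{\hypnu}}-\tfrac{\nu_{\hypnu}+1}{\nu_{\hypnu}}u\big)=u(1-u)$ valid on $\{u=v\}$ gives~\eqref{e.c81501}.
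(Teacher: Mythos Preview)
Your argument is correct and takes a genuinely different route from the paper's.

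The paper works locally on a fixed interval $[\underline x,\bar x]\subset\cZ$ and establishes \emph{strong} $L^2$-convergence $u_n\to v$ there, which then allows passing to the limit in the quadratic term $u_n^2$ directly. The key step is a one-sided inequality $u_n\geq v_n-\mu$ on $(\underline x+\delta,\bar x-\delta)$, proved by a contradiction argument based on \Cref{l.no_local_min}: if $u_n$ were to dip below $v_n-\mu$, one could locate a local minimum of $u_n$ at which $u_n<v_n$, violating the lemma. Combined with $u_n,v_n\rightharpoonup v$, this one-sided bound yields $\|u_n-v_n\|_{L^2}\to 0$.

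You instead first take the distributional limit of the equation to conclude that the weak-$*$ limit of $u_n(1-u_n)$ vanishes on $\Int(\cZ)$, upgrade this to $L^1_{\mathrm{loc}}$ (hence a.e.) convergence via nonnegativity, and then exploit the \emph{global} structure given by the normalization~\eqref{e.normalization.hyp} together with \Cref{lem:Chris monot}: monotonicity on $(0,\infty)$ permits Helly's selection theorem, and the uniform lower bound $u_n\geq\delta$ on $(-\infty,0]$ forces $u_n\to 1$ there. Both give pointwise a.e.\ convergence of $u_n$, from which $u(1-u)=0$ follows.

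Your approach is more elementary and leans on classical compactness (Helly), but it is tied to the specific normalization and the resulting global monotonicity picture. The paper's approach is more local and does not reference the normalization; it would apply equally well to any subinterval of $\Int(\cZ)$ regardless of where the origin sits. Both rely on the same underlying partial-monotonicity structure, but yours uses it through \Cref{lem:Chris monot} globally, while the paper uses it through \Cref{l.no_local_min} pointwise.
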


With \Cref{l.hyp_n_bounds} and \Cref{l.Z_discrete}, we see that $u_n$ converges to $u$ in $C^1$ on $\cZ^c$.  It follows that $(c,u,v)$ is a solution of~\eqref{e.hyp} in the sense of \Cref{def:hyp}.

Next, we check the boundary conditions $u(-\infty) = 1$ and $u(+\infty) = 0$.  The latter follows directly since, by \Cref{lem:exp decay}, each $u_n$ satisfies
\be
	u_n(x) \leq C e^{-\theta x}
		\qquad\text{ for all } x \geq 0.
\ee

The limit on the left is more delicate.  First notice that by \Cref{l.hyp_n_bounds}, there is $\bar x$ such that $u$ is smooth on $(-\infty, \bar x)$.  Next notice that
\be
	\limsup_{x\to-\infty} u(x) \leq 1.
\ee
Hence, it is enough to show that $\ell \geq 1$, where
\be
	\ell = \liminf_{x\to-\infty} u(x).
\ee
This is now our aim.  We make note of two facts first:
\be\label{e.c80804}
	\liminf_{x\to-\infty} v(x) \geq \ell
		\quad\text{ and }\quad
	\ell \geq \delta > 0,
\ee
where the first inequality follows by~\eqref{e.kernel} and the second from~\eqref{e.normalization2}.  Since $u \leq 1$, either $u$ is eventually monotonic (that is, up to decreasing $\bar x$, $u$ is monotonic on $(-\infty,\bar x)$) or $u$ has a sequence of local minima on which its value tends to $\ell$.  In either case, we can choose $x_n$ to be a sequence of points tending to $-\infty$ such that
\be
	\lim_{n\to\infty} u(x_n) = \ell
	\quad\text{ and }\quad
	\lim_{n\to\infty} u_x(x_n) = 0.
\ee
Evaluating~\eqref{e.hyp2} at $x_n$, we find
\be
	0 = \lim_{n\to\infty} u_x(x_n) (-c - v_x(x_n))
		= \lim_{n\to\infty} u(x_n) \left( \frac{\nu + v(x_n)}{\nu} - \frac{\nu+1}{\nu} u(x_n)\right)
		\geq \ell \left( \frac{\nu + \ell}{\nu} - \frac{\nu + 1}{\nu} \ell\right).
\ee
By~\eqref{e.c80804}, it follows that
\be
	\frac{\nu + \ell}{\nu} \leq \frac{\nu + 1}{\nu} \ell
	\quad\text{ which is equivalent to }
	0 \leq \nu(\ell-1).
\ee
We conclude that $\ell = 1$, which, due to the discussion above, yields $u(-\infty) = 1$, as claimed.

Thus, $(c,u,v)$ is a traveling wave solution to~\eqref{e.hyp}, which concludes the proof.
\end{proof}

\ssection{Proof of the first main lemma: \Cref{l.hyp_n_bounds}}\label{ss.lemmas.hyp}

\begin{proof}
We show only the $C^1$ regularity as the higher regularity may be established by differentiating the equation and apply the same argument.  Due to the convergence of $v_n$ to $v$, there is $\delta>0$ and $n_0$ such that if $n\geq n_0$ then
\be
	|(v_n)_x + c|
		\geq \delta
		\qquad\text{ in }
		(x_0 - \sfrac{3\mu}{4}, x_0 + \sfrac{3\mu}{4}).
\ee
Using then~\eqref{e.tw2}, we find, on $(x_0 - \sfrac{3\mu}{4}, x_0 + \sfrac{3\mu}{4})$,
\be\label{e.c80503}
	\begin{split}
		|(u_n)_x| &= \frac{1}{|(v_n)_x + c_n|} \left|\frac{1}{|\chi|} (u_n)_{xx} + u_n\left(\frac{\nu + v_n}{\nu} - \left(\frac{\nu + 1}{\nu}\right) u_n\right)\right|
		\\&
			\leq \frac{1}{\delta} \left|\frac{1}{|\chi|} (u_n)_{xx} + u_n\left(\frac{\nu + v_n}{\nu} - \left(\frac{\nu + 1}{\nu}\right) u_n\right)\right|.
	\end{split}
\ee

By the mean value theorem, there is $\xi_\ell\in (x_0 - \sfrac{3\mu}{4}, x_0 - \sfrac{\mu}{2})$ and $\xi_r \in (x_0 +\sfrac{\mu}{2}, x_0 + \sfrac{3\mu}{4})$ such that
\be\label{e.c80502}
	\begin{split}
	&|(u_n)_x(\xi_\ell)| = \left|\frac{u_n(x_0 - \sfrac{\mu}{2}) - u_n(x_0 - \sfrac{3\mu}{4})}{\sfrac{\mu}{4}}\right|
		\leq \frac{4}{\mu}
	\quad\text{ and}
	\\&
	|(u_n)_x(\xi_r)| = \left|\frac{u_n(x_0 + \sfrac{3\mu}{4})-u_n(x_0 + \sfrac{\mu}{2}) }{\sfrac{\mu}{4}}\right|
		\leq \frac{4}{\mu}.
	\end{split}
\ee
The second inequality follows from the fact that $0 \leq u_n \leq 1$.

By elliptic regularity theory, $u_n$ is smooth.  Hence, by the extreme value theorem, there is $\xi \in [\xi_\ell, \xi_r]$ such that
\be\label{e.c80501}
	|(u_n)_x(\xi)|
		= \max_{[\xi_\ell,\xi_r]} |(u_n)_x|
		\geq\max_{[x_0 - \sfrac{\mu}{2},x_0 + \sfrac{\mu}{2}]} |(u_n)_x|.
\ee
The equality above follows from the definition of a maximum, and the inequality is because $[x_0 - \sfrac{\mu}{2}, x_0 + \sfrac{\mu}{2}] \subset [\xi_\ell, \xi_r]$, by construction. 
If $\xi = \xi_\ell$ or $\xi_r$, then the conclusion follows from~\eqref{e.c80502} and~\eqref{e.c80501}.  On the other hand, if $\xi$ is an interior minimum, then $(u_n)_{xx}(\xi) = 0$ and~\eqref{e.c80503} yields
\be
	|(u_n)_x(\xi)|
		\leq \frac{1}{\delta} \left|u_n\left(\frac{\nu + v_n}{\nu} - \left(\frac{\nu + 1}{\nu}\right) u_n\right)\right|
		\leq \frac{1}{\delta} \frac{\nu + 1}{\nu}.
\ee
In view of~\eqref{e.c80501}, the proof is finished.
\end{proof}

\ssection{Proof of the second main lemma: \Cref{l.Z_discrete}}

\begin{proof}
	Consider any closed interval $[\underline x, \bar x] \subset \cZ$ with $\underline{x} < \bar{x}$.   
We note that, by the definition of $\cZ$, it must be that $v_x = -c$ on $[\underline x,\bar x]$.   
We have two important consequences from this:
\be\label{e.c80702}
	v_{xx} = 0 \quad\text{ on } [\underline x,\bar x],
\ee
and
\be\label{e.c80703}
	v \text{ is nonincreasing on } [\underline x,\bar x].
\ee

We claim that, up to extracting a subsequence,
\be\label{e.c80601}
	u_n \to v
		\quad \text{ as } n \to \infty \text{ in } L^2([\underline x,\bar x]).
\ee
We postpone its proof momentarily and show how to conclude.

Fix any smooth function $\psi$ with support in $(\underline x,\bar x)$, multiply it against~\eqref{e.tw2}, and take $n\to\infty$ to find
\be
	\begin{split}
	\int \psi_x (v_x + c) u \dx
		&+ \int \psi v_{xx} u \dx
		= \lim_{n\to\infty} \left(\int \psi_x \left((v_n)_x + c_n\right) u_n \dx
		+ \int \psi (v_n)_{xx} u_n \dx\right)
		\\&
		= - \lim_{n\to\infty} \int \psi \left((v_n)_x + c_n\right) (u_n)_x \dx
		= \lim_{n\to\infty} \int \psi \left(\frac{\nu_n + v_n}{\nu_n} - \left(\frac{\nu_n+1}{\nu_n} \right)u_n\right)u_n \dx
		\\&
		= \int \psi \left(\frac{\nu + v}{\nu} - \left(\frac{\nu+1}{\nu}\right) v\right)v \dx.
	\end{split}
\ee
Above we used that $(v_n)_{xx} \rightharpoonup v_{xx}$ in $L^2([\underline x,\bar x])$ and that $v_n \to v$ in $L^\infty([\underline x,\bar x])$.
Recalling that $v_x + c = 0$ on $[\underline x,\bar x]$, yields
\be
	0 = \int \psi \Big(\frac{\nu + v}{\nu} - \Big(\frac{\nu+1}{\nu} \Big) v\Big)v\dx,
\ee
which concludes the proof.

We now prove~\eqref{e.c80601}, which is the most difficult part.  We first note that $u_n$ converges weakly to $v$ in $L^2([\underline x,\bar x])$.  Using the second equation in~\eqref{e.tw}, we have that the weak limit $u$ of $u_n$ clearly satisfies
\be
	u
		= v - \nu v_{xx}.
\ee
Using this and~\eqref{e.c80702}, we deduce that $u =v$; hence,
\be\label{e.c80704}
	u_n \rightharpoonup  v
	\quad\text{ in }
	L^2([\underline x,\bar x]).
\ee

Fix $\mu, \delta>0$.  Our next step is to show that, for all $n$ sufficiently large and up to extracting a subsequence,
\be\label{e.c80701}
	u_n \geq v_n - \mu
		\quad\text{ on }
		(\underline x + \delta, \bar x-\delta).
\ee
Before showing~\eqref{e.c80701}, we show how to conclude~\eqref{e.c80601} from it.  Noting~\eqref{e.0<u<1} and using that $u_n, v_n \rightharpoonup v$, we have that
\be
	\begin{split}
	\limsup_{n\to\infty} &\int_{\underline x}^{\bar x} (u_n - v_n)^2 \dx
		\\&\leq 2\delta + \limsup_{n\to\infty}\int_{\underline x + \delta}^{\bar x - \delta} |u_n - v_n| \dx
		\leq 2\delta + \mu |\bar x - \underline x| + \limsup_{n\to\infty}\int_{\underline x + \delta}^{\bar x - \delta}  |u_n - v_n + \mu| \dx
		\\&= 2\delta + \mu |\bar x- \underline x| + \limsup_{n\to\infty}\int_{\underline{x} + \delta}^{\bar{x} - \delta}  (u_n - v_n + \mu) \dx
		\\& \leq 2\delta + 2\mu |\bar x-\underline{x}| + \limsup_{n\to\infty} \int \1_{[\underline  x+\delta, \bar x - \delta]} u_n \dx
		- \liminf_{n\to\infty} \int \1_{[\underline x +\delta, \bar x - \delta]} v_n \dx
		\\&
		= 2\delta + 2\mu |\bar x - \underline x| + \int \1_{[\underline x +\delta, \bar x - \delta]} v \dx
		-  \int \1_{[\underline x +\delta, \bar x - \delta]} v \dx
		= 2 \delta + 2\mu |\bar x - \underline x|.
	\end{split}
\ee
Note that~\eqref{e.c80701} was used in the first equality.  Clearly,~\eqref{e.c80601} follows after taking $\delta,\mu\to 0$.

We now show that~\eqref{e.c80701} holds.  As the proof is quite intricate, we briefly outline the main idea here. We first show that, were~\eqref{e.c80701} to be false, the weak convergence of $u_n$ and $v_n$ to $v$ implies the existence of points $y_n$ and $z_n$ between which $u_n - v_n$ travels from being bigger than $-\mu/2$ to being $-\mu$.  Choosing a `good' point between them, we can use the fact that $v_n$ is `nearly' decreasing (see~\eqref{e.c80706}) along with the partial monotonicity result \Cref{l.no_local_min} to find a nontrivial interval where $u_n - v_n$ remains smaller than $-\mu/2$, which is not consistent with the weak convergence of $u_n$ and $v_n$ to $v$.

First, by the weak convergence of $u_n$ to $v$, we have that there exists $y_n \in (\underline x,\underline x + \delta)$ such that
 \be\label{e.c8112}
 	u_n(y_n) \geq v_n(y_n) - \frac{\mu}{2}.
\ee
If this were not true,
\be
	\int \psi(v_n - u_n)
		\geq \frac{\mu}{2} \int \psi\dd x,
\ee
for any nonnegative, nontrivial test function $\psi$ supported in $(\underline x, \underline x + \delta)$.  This contradicts the fact that $u_n \rightharpoonup v$ and $v_n \to v$.

Next, let $z_n$ to be the first time in $(y_n, x_n - \delta)$ that
\be\label{e.c8111}
	u_n(z_n) = v_n(z_n) - \mu
\ee
If no such point exists, then~\eqref{e.c80701} holds and we are finished.  Hence, arguing by way of contradiction, we assume $z_n$ exists.  Notice that, by the $C^1$-convergence of $v_n$ to $v$ and~\eqref{e.c80703}, there is $n$ sufficiently large such that
\be\label{e.c80706}
	(v_n)_x \leq  \frac{\mu}{100(1+ \bar x - \underline x)} \quad\text{ on } [\underline x,  \bar x].
\ee
As a consequence, using the concavity of $v$, we have
\be
	u_n(z_n)
		= v_n(z_n) - \mu
		\leq (v_n(y_n) + (v_n)_x(y_n) (z_n - y_n)) - \mu
		< v_n(y_n) + \frac{\mu}{100} - \mu
		< u_n(y_n),
\ee
where the first inequality follows by~\eqref{e.c80706} and
 the second inequality is due to the choice of $y_n$~\eqref{e.c8112}.

We next claim that there exists a point $\zeta_n \in [y_n,z_n]$ such that
\be\label{e.c80707}
	u_n(\zeta_n) \leq v_n(\zeta_n) - \frac{3\mu}{4}
	\quad\text{ and } \quad
	(u_n)_x(\zeta_n) < 0.
\ee
We construct this as follows. Let
\be
	\zeta_n = \min \left\{ x \in [y_n,z_n) : u_n(x) = v_n(x) - \frac{3\mu}{4} - \frac{\mu (x - y_n)}{50(1 + \bar x - \underline x)}\right\}.
\ee  
That $\zeta_n$ exists follows from the definition of $z_n$~\eqref{e.c8111} and that $\zeta_n > y_n$ follows from the definition of $y_n$~\eqref{e.c8112}.  Since $u_n - v_n > \sfrac{-3\mu}{4} - \sfrac{\mu (x-y_n)}{50(1+\bar x -\underline x)}$ in $(y_n, \zeta_n)$, it must be that
\be
	(u_n)_x (\zeta_n) \leq  \left(v_n - \frac{3\mu}{4} - \frac{\mu (x-y_n)}{50(1+\bar x - \underline x)}\right)_x(\zeta_n).
\ee
After applying~\eqref{e.c80706}, we conclude~\eqref{e.c80707}.  

We claim that
\be\label{e.c80708}
	u_n(y) \leq v_n(y) - \frac{\mu}{2}
		\qquad\text{ for all } y \in (\zeta_n, \bar x).
\ee
Before proving this, we show how to conclude the proof by contradiction that started with the existence of $z_n$.  Indeed, up to taking a further subsequence, we have that $\zeta_n \to \zeta_\infty$, for some $\zeta_\infty$.  Then, take any nonnegative, nontrivial test function $\psi$ supported on a compact subset of $(\zeta_\infty, \bar x)$.  Note that the fact that $\zeta_n < z_n \leq \bar x - \delta$ ensures that the support of $\psi$ is nontrivial.  By~\eqref{e.c80708}, we observe that
\be
	\liminf_{n\to\infty} \int \psi (v_n - u_n)
		\geq \frac{\mu}{2} \int \psi \dx
		> 0.
\ee
This contradicts the weak convergence of $u_n$ and $v_n$ to $v$.

We now show that~\eqref{e.c80708} holds.  To establish this, let $y_{\min}$ be the first local minimum of $u_n$ after $\zeta_n$.  We first consider the case where $y_{\min}$ does not exist or where
\be
	y_{\min} \geq \bar x.
\ee
In this case, from to the lack of an interior minimum and the fact that $(u_n)_x(\zeta_n) < 0$ (by construction, see~\eqref{e.c80707}), it follows that $u_n$ is decreasing on $(\zeta_n, \bar x)$.  In this case, for any $y \in (\zeta_n, \bar x)$,
\be\label{e.c80709}
	u_n(y)
		\leq u_n(\zeta_n)
		\leq v_n(\zeta_n) - \frac{3\mu}{4}
		\leq v_n(y) + \frac{\mu}{100(1 + \bar x - \underline x)} (y - \zeta_n) - \frac{3\mu}{4}
		< v_n(y) - \frac{\mu}{2}.
\ee
The first inequality is because $u_n$ is decreasing, the second inequality is due to~\eqref{e.c80707}, the third inequality is due to~\eqref{e.c80706}, and the last inequality is due to the width of the domain.  This establishes~\eqref{e.c80708}.

We next consider the case where $y_{\min} \in (\zeta_n, \bar x)$.  The same reasoning as in~\eqref{e.c80709} yields
\be
	u_n(y_{\min})
		< v_n(y_{\min}) - \frac{\mu}{2}.
\ee
\Cref{l.no_local_min}, however, implies that
\be
	u_n(y_{\min})
		\geq \frac{\nu + v_n(y_{\min})}{\nu + 1}
		\geq v_n(y_{\min}).
\ee
Thus, we have reached a contradiction, and hence, this case cannot occur.  This concludes the proof of~\eqref{e.c80708}.  As detailed above, this yields~\eqref{e.c80701}, which, in turn, completes the proof of the lemma.
\end{proof}

\section{The partially matching upper bounds: proof of Theorems~\ref{t.upper.hyp} and \ref{t.upper.pm}}
\label{s.upper}

Here we construct special solutions that, in a sense, saturate the bounds obtained in \Cref{t.lower.pm} and \Cref{t.lower.hyp}.  We begin by constructing a traveling wave solution to~\eqref{e.hyp}.

\ssection{A sequence converging to the discontinuous hyperbolic wave: \Cref{t.upper.hyp}}\label{s.upper.hyp}

Here we establish the existence of a sequence of solutions to \eqref{e.tw} that converges to a discontinuous hyperbolic traveling wave.  
Our construction proceeds in the usual way, by approximating traveling waves solutions to \eqref{e.tw} by solutions to a well-chosen Dirichlet problem in a finite slab.  The main novelty in our setting is to construct a solutions such that $v_x(0) + c \approx 0$ as $|\chi|\to\infty$.  In view of \Cref{prop:cardZleq1} and \Cref{l.Z_discrete}, this is enough to deduce that, in the limit, there is a jump discontinuity at the origin.  
The convergence of the sequence thus yield a new method to construct traveling waves for the hyperbolic problem \eqref{e.hyp}, different from the original one provided in \cite{FuGrietteMagal-M3AS} and that does not require the same technical assumption.

\sssection{The main proposition and the proof of \Cref{t.upper.hyp}}

The main proposition used in establishing \Cref{t.upper.hyp} is the following:

\begin{proposition}\label{prop:decreasing-wave}
	Fix any $\chi<0$, $\nu>0$, and $\delta\in (0, \sfrac{\nu}{(\nu+1)})$.  There exists a traveling wave $(c, u, v)$ solving \eqref{e.tw} such that $u$ and $v$ are decreasing and $c, u$ satisfy,
	\be
		c\in \left(\frac{1}{\sqrt{|\chi|}}\sqrt{\frac{\nu}{\nu+1}}, \frac{1}{\sqrt{\nu}}+\frac{2}{\sqrt{|\chi|}}\sqrt{\frac{\nu + 1}{\nu}}\right),
		\quad
		u(0) = \delta,
		\quad
		u(-\infty)=1,
		\quad
		\text{and}
		\quad
		u(+\infty) = 0.  
	\ee
	Moreover, for $|\chi|$ sufficiently large, there is a constant $C>0$, depending on $\nu$ and $\delta$ only, and a point $x_\chi \in (-C,C)$ such that
	\be\label{eq:T5.1-Znonempty}
		(v_x(x_\chi)+c)^2 \leq \frac{4}{|\chi|} \frac{\nu + 1}{\nu}.
	\ee
\end{proposition}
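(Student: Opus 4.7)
The construction proceeds via the classical slab-truncation method. For each $L \gg 1$, I would construct $(c_L, u_L, v_L)$ solving a boundary-value version of \eqref{e.tw} on $[-L, L]$ with $u_L(-L) = 1$, $u_L(L) = 0$, normalized by $u_L(0) = \delta$ to pin down $c_L$, and $v_L = \cK_\nu * \bar u_L$ via the extension in \eqref{e.c080101}. Existence follows from Leray-Schauder degree applied to a homotopy (e.g., out of $\chi = 0$), provided one has the a priori ingredients $0 \leq u_L \leq 1$, monotonicity of $u_L$ (via a slab analogue of \Cref{lem:Chris monot}, whence $v_L$ inherits monotonicity through \eqref{e.v'}), and two-sided bounds on $c_L$. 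The upper bound $c_L < \sfrac{1}{\sqrt\nu} + \sfrac{2}{\sqrt{|\chi|}}\sqrt{(\nu+1)/\nu}$ follows by combining the eigenfunction trick below with the pointwise estimate $-v_x \leq \sfrac{1}{\sqrt{\nu}}$ from \eqref{e.v'}; the lower bound $c_L > \sfrac{1}{\sqrt{|\chi|}}\sqrt{\nu/(\nu+1)}$ is extracted self-consistently from the same trick.

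\textbf{The eigenfunction trick.} Setting
\be
\tilde u_L(x) := u_L(x)\,\exp\!\Big(\tfrac{|\chi|}{2}\bigl(c_L x + v_L(x)\bigr)\Big),
\ee
a direct substitution into \eqref{e.tw2}, in which the choice $\alpha_x = \tfrac{|\chi|}{2}(c_L+(v_L)_x)$ cancels the first-order term, yields
\be
-\tfrac{1}{|\chi|}(\tilde u_L)_{xx} = \tilde u_L\Big[\tfrac{\nu + v_L}{\nu} - \tfrac{\nu+1}{\nu}u_L - \tfrac{(v_L)_{xx}}{2} - \tfrac{|\chi|}{4}\bigl(c_L+(v_L)_x\bigr)^2\Big].
\ee
At an interior maximum $x_\chi$ of $\tilde u_L$, one has $-(\tilde u_L)_{xx}(x_\chi)\geq 0$; dividing by $\tilde u_L(x_\chi) > 0$ and using $0 \leq u_L, v_L \leq 1$ together with $|(v_L)_{xx}| \leq \sfrac{1}{\nu}$ (from the second equation of \eqref{e.tw}) gives
\be
\tfrac{|\chi|}{4}\bigl(c_L+(v_L)_x(x_\chi)\bigr)^2 \leq \tfrac{\nu+1}{\nu},
\ee
which is \eqref{eq:T5.1-Znonempty}.

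\textbf{Producing and localizing $x_\chi$.} Monotonicity of $u_L$ combined with $u_L(0) = \delta$ forces a uniform lower bound $v_L(0) \geq c_0(\nu,\delta) > 0$ (since $v_L(0)$ integrates $u_L$, which exceeds $\delta$ on $(-\infty, 0]$, against a positive kernel), whence $\tilde u_L(0) \geq \delta\,\exp(|\chi| c_0/2)$, while $\tilde u_L(\pm L)$ are exponentially small in $|\chi|$; hence $\tilde u_L$ attains an interior maximum $x_\chi$. To confine $x_\chi$ to $(-C, C)$ uniformly in $L$ for $|\chi|$ large: on the \emph{right}, \Cref{lem:exp decay} (applicable in the slab via hypothesis (iii)) provides $u_L(x) \leq C\delta e^{-\theta x}$ for $x \geq 0$, and inserting this into \eqref{e.v'} yields analogous exponential decay of $(v_L)_x$, so $|c_L+(v_L)_x| \geq c_L/2$ beyond an $O(1)$ threshold, contradicting \eqref{eq:T5.1-Znonempty}; on the \emph{left}, if $x_\chi \ll 0$, monotonicity and the positive lower bound on $c_L$ would force $u_L$, hence $v_L$, to be nearly constant over a large interval containing $x_\chi$, contradicting $(v_L)_x(x_\chi) \approx -c_L$.

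\textbf{Passage to the limit and the main obstacle.} Uniform bounds on $c_L$, the $W^{2,\infty}$-bound on $v_L$ (from $\|u_L\|_\infty \leq 1$ and \eqref{e.kernel}), and monotonicity of $u_L$ permit extraction of a subsequence $L_n \to \infty$ along which $c_{L_n} \to c$, $u_{L_n} \to u$ pointwise (Helly), and $v_{L_n} \to v$ in $C^1_{\mathrm{loc}}$; elliptic regularity upgrades the convergence of $u_{L_n}$ and shows that the limit $(c,u,v)$ solves \eqref{e.tw} on $\R$ with the claimed monotonicity, normalization, boundary conditions, and speed bounds, and the localization of $x_\chi$ transfers to the limit. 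The main obstacle is the circular dependence between the two key ingredients: \eqref{eq:T5.1-Znonempty} is useful for localization only when $c_L$ has a uniform positive lower bound, yet that lower bound itself must be extracted from \eqref{eq:T5.1-Znonempty} via the structure of $v_L$ near $x_\chi$. Closing the loop will require either a bootstrap or a clean contradiction argument excluding $c_L \to 0$ as $|\chi| \to \infty$.
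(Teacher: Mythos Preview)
Your framework matches the paper's: slab truncation, Leray--Schauder degree, the substitution $\tilde u=u\,e^{|\chi|(cx+v)/2}$ to locate $x_\chi$, and \Cref{lem:exp decay} for the right-side localization. The unresolved piece is exactly the circularity you flag, and the paper closes it not by a bootstrap but by decoupling the two uses of the speed bound.

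The interval $c_L\in(\underline c,\bar c)$ is \emph{not} extracted from the eigenfunction trick. Using $0\leq v\leq 1$ and $0\leq -v_x\leq 1/\sqrt\nu$ (from \eqref{e.v'} and monotonicity), $u_L$ is a supersolution of the FKPP slab problem $-c\varphi_x-\tfrac1{|\chi|}\varphi_{xx}=\varphi(1-\tfrac{\nu+1}{\nu}\varphi)$ and a subsolution of $-(c-\tfrac1{\sqrt\nu})\varphi_x-\tfrac1{|\chi|}\varphi_{xx}=\tfrac{\nu+1}{\nu}\varphi(1-\varphi)$. Comparison with the known FKPP slab solutions forces $u_L(0)\to\tfrac{\nu}{\nu+1}$ for $c$ small and $u_L(0)\to 0$ for $c$ large. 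The degree argument (homotopy in the strength $\tau$ of the advection term, so that $\tau=0$ is pure FKPP) actually yields a \emph{connected set} of slab solutions over $c\in[0,\bar c]$; the intermediate value property of $(c,u)\mapsto u(0)$ then selects $c_L\in(\underline c,\bar c)$ with $u_L(0)=\delta$. This is the content your ``pin down $c_L$'' elides.

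For the right-side localization as $|\chi|\to\infty$, the bound $\underline c=|\chi|^{-1/2}\sqrt{\nu/(\nu+1)}\to 0$ is useless. The paper first passes $L_n\to\infty$ to obtain a genuine \eqref{e.tw} wave with speed $c$; since this is \emph{some} traveling wave, $c\geq c^*_{\chi,\nu}$, and \Cref{t.lower.hyp}.(i) together with \Cref{p.speed.hyp}---both already proved, independently of this proposition---give $c\geq c^*_{\hypnu,\nu}/2>0$ for $|\chi|$ large. As $c_{L_n}\to c$, the slab speeds inherit $c_{L_n}\geq c^*_{\hypnu,\nu}/4$ for large $n$, and this uniform lower bound is what feeds into the exponential-decay localization. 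There is no loop to close: the hyperbolic speed bound was established first.

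Your left-side localization is harder than necessary. From $u_L,v_L\leq 1$ and $c_L\geq\underline c>0$ one gets $\tilde u_L(x)\leq e^{|\chi|(\underline c\,x+1)/2}$, which drops below $\delta/2<\delta\leq\tilde u_L(0)$ as soon as $x\leq\bigl(\tfrac{2}{|\chi|}\log\tfrac{\delta}{2}-1\bigr)/\underline c$; no near-constant contradiction is needed.
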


Before establishing \Cref{prop:decreasing-wave}, we show how to use it to prove \Cref{t.upper.hyp}.

\begin{proof}[Proof of \Cref{t.upper.hyp}]
Fix any $\nu_{\hypnu} >0$ and take any sequence $\chi_n \to -\infty$, $\nu_n\rightarrow \nu_{\hypnu}$.   Let $(c_n,u_n,v_n)$ be the solution of~\eqref{e.tw} guaranteed by \Cref{prop:decreasing-wave}.  By \Cref{t.lower.hyp}.(ii), there is $(c,u,v)$ that is a solution of~\eqref{e.hyp} in the sense of \Cref{def:hyp} to which $(c_n,u_n,v_n)$ converges along a subsequence.  
This additionally gives that
\be
	u(-\infty) = 1
	\quad\text{ and }\quad
	u(+\infty) = 0.
\ee
The monotonicity of $u$ and $v$ follows directly from that of $u_n$ and $v_n$.

Let $x_n \in (-C, C)$ be the point such that
\be\label{e.c80910}
	((v_n)_x(x_n) + c_n)^2 \leq \frac{4}{|\chi_n|} \frac{\nu_n + 1}{\nu_n}.
\ee
	Up to passing to a further subsequence, there exists $x_* \in [-C,C]$ such that $x_n \to x_*$ as $n\to\infty$.  The convergence of $v_n$ is weak-$*$ in $W^{2,\infty}$, so that~\eqref{e.c80910} implies, 
\be
	v_x(x_*) + c = 0.
\ee
This, along with \Cref{prop:cardZleq1} and \Cref{l.Z_discrete}, implies that $\cZ = \{0\}$ and that
\be
	u(x_*^-) = \frac{\nu + v(x_*)}{\nu + 1} > 0
	\quad\text{ and }\quad
	u(x_*^+) = 0.
\ee
This concludes the proof, up to translating by $x_*$.
\end{proof}

\sssection{Constructing a solution: the proof of \Cref{prop:decreasing-wave}}

The main step in constructing the traveling wave of \Cref{prop:decreasing-wave} is to solve the `slab problem':
\begin{equation}\label{eq:slab-nupos}
	\begin{cases} 
		-c u_x - \frac{1}{|\chi|} u_{xx} = (uv_x)_x +u(1-u), & x\in (-L, L), \\
		u(-L)=1, u(L)=0,
	\end{cases}
\end{equation}
where
\be
\label{e.240943}
	v = \mathcal{K}_{\nu}* \bar{u}
	\qquad\text{ where }
	\bar u(x)
		= \begin{cases}
			1 \quad&\text{ for }x < -L,\\
			u(x) \quad&\text{ for } x \in [-L,L],\\
			0 \quad&\text{ for } x > L.
		\end{cases}
\ee
This is achieved through a number of steps and relies on the Leray-Schauder index.  

To this end, for fixed $\chi<0$, $\nu>0$, $c>0$, and $\tau\in [0,1]$, we define the 
  operator $\mathcal{F}_\tau$ acting on $u\in C^1([-L, L])$ by
	\be
		\mathcal{F}_\tau(u):=\tilde u(x), 
	\ee
	where $\tilde u(x)$ is the unique solution to the equation
	\be
	\label{eq:topologicaldegree-defFtau}
		\begin{cases}
			-\frac{1}{|\chi|}\tilde u_{xx}(x)-c\tilde u_x(x) +\tilde u(x)=\tau (u(x)v_x(x))_x + u(x)\big(2-u(x)\big), &\text{ for all } x\in (-L, L) \\
			\tilde u(-L) = 1, \tilde u(L)=0,
		\end{cases}
	\ee
for $v$ given by~\eqref{e.240943}. For any bounded open set $\Omega\subset  C^1([-L, L])$, we use $i(\mathcal{F}_\tau, \Omega)$ to denote the Leray-Schauder index of $\mathcal{F}_\tau$ acting on $\Omega$. We refer the reader to \cite[Chapter 14]{Zeidler} for the definition of Leray-Schauder index. 

We remark that fixed points $u$ of $\mathcal{F}_\tau$ correspond exactly to solutions of,
	\begin{equation}\label{eq:slab-nupos-tau}
		\begin{cases} 
			-c u_x - \frac{1}{|\chi|} u_{xx} = \tau (uv_x)_x +u(1-u), & x\in (-L, L), \\
			u(-L)=1, u(L)=0,
		\end{cases}
	\end{equation}
	where $v$ is given by ~\eqref{e.240943}. For $\tau=0$ we recover a classical slab problem for FKPP waves.  For $\tau=1$ we have our target problem \eqref{eq:slab-nupos}.

\begin{lemma}[Index in the slab]\label{lem:index=1}
	Let $0<\chi_m<\chi_M$, $0<\nu_m<\nu_M$, $ 0<c_m<c_M$ and $\alpha\in(0, 1)$ be fixed. Fix $|\chi|\in [\chi_m, \chi_M]$, $\nu\in[\nu_m, \nu_M]$,  and $c\in[c_m, c_M]$.	There exists $M_0>0$ such that  for all $M\geq M_0$, the Leray-Schauder index of $\mathcal{F}_\tau$ in the bounded open set 
	\begin{equation}\label{eq:topologicaldegree-Omega}
		\Omega:=\left\{u\in C^{1, \alpha}\big([-L, L]\big)\,:\,\begin{cases} 0< u(x)< 1,& \text{ for all } x\in(-L, L),\\
			-M< u_x(x)<0,&\text{ for all } x\in[-L, L],
		\end{cases}\right\} \subset C^{1, \alpha}([-L, L])
	\end{equation}
	is equal to $1$ for $\tau\in [0,1]$:
	\be
	\label{eq:topologicaldegree1}
		i(\mathcal{F}_\tau, \Omega)=1 \text{  for $\tau\in [0,1]$}. 
	\ee
	The constant $M_0$ only depends on $\chi_m$, $\chi_M$, $\nu_m$, $\nu_M$, $c_m$, and $c_M$.
\end{lemma}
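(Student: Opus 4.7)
The plan is to establish \Cref{lem:index=1} by the standard Leray--Schauder homotopy invariance technique. The first step is to verify that $\mathcal{F}_\tau$ is a compact mapping on $C^{1,\alpha}([-L,L])$ for every $\tau\in[0,1]$; this follows from Schauder theory applied to the linear elliptic problem \eqref{eq:topologicaldegree-defFtau}, combined with the fact that $v$ depends continuously on $u$ via the bounded convolution $\cK_\nu*$ together with \eqref{e.240943}. Granted this compactness, by homotopy invariance of the Leray--Schauder index it suffices to show that $\mathcal{F}_\tau$ has no fixed points on $\partial\Omega$ for any $\tau\in[0,1]$, and then compute $i(\mathcal{F}_0,\Omega)=1$.

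For the boundary exclusion, which is the bulk of the work, I would establish three a priori estimates on any solution $u$ of the fixed-point equation \eqref{eq:slab-nupos-tau}. First, the strict bounds $0<u<1$ on $(-L,L)$: rewriting \eqref{eq:slab-nupos-tau} using $\nu v_{xx}=v-\bar u$, and evaluating at a putative interior maximum/minimum where $u_x=0$, the maximum principle together with the structure of the reaction yields a contradiction. Second, strict monotonicity $u_x<0$: this is an adaptation of \Cref{lem:Chris monot}. One argues by contradiction at the leftmost interior local minimum $x_m$ of $u$ with $u(x_m)<2/(2+1/\nu)$, using the equation to force $u(x_m)\geq(\nu+v(x_m))/(\nu+1)$ while the convolution structure of $v$ combined with the monotonicity to the left of $x_m$ forces the opposite inequality; the boundary values $u(-L)=1$, $u(L)=0$ rule out local minima with values above the threshold. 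Third, the universal gradient bound $-M_0<u_x<0$: standard interior/boundary Schauder estimates applied to \eqref{eq:topologicaldegree-defFtau}, combined with the uniform $L^\infty$ bounds on $u$, $v$, and $v_x$ (recall $\|v_x\|_\infty\leq 1/\sqrt{\nu}\leq 1/\sqrt{\nu_m}$ by \eqref{e.v'}), produce a $C^{1,\alpha}$ bound on $u$ depending only on the fixed ranges $[\chi_m,\chi_M]$, $[\nu_m,\nu_M]$, $[c_m,c_M]$, which determines $M_0$.

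To compute $i(\mathcal{F}_0,\Omega)$, note that at $\tau=0$ the fixed-point problem reduces to the classical FKPP slab problem
\be
	-\tfrac{1}{|\chi|}u_{xx}-cu_x=u(1-u),
	\qquad u(-L)=1,\ u(L)=0.
\ee
This is standard: one performs a further homotopy to a strictly linear problem (e.g.\ replacing $u(1-u)$ by $u(1-\sigma u)$ and sending $\sigma\to 0$, verifying the same a priori bounds along the way), and then directly computes the Leray--Schauder index of the resulting linear Fredholm map to be $1$. Alternatively, one invokes uniqueness of monotone solutions to the FKPP slab problem together with nondegeneracy of its linearization.

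The main obstacle is the monotonicity estimate $u_x<0$ in the boundary exclusion step, since the nonlocal drift $(uv_x)_x$ destroys the usual comparison principle and so prevents differentiating the equation and applying the maximum principle to $u_x$ directly; the argument must instead proceed via the pointwise relation at interior extrema coupled with the global convolution structure of $v$, mirroring \Cref{lem:Chris monot}. A secondary concern is ensuring that $M_0$ and all the quantitative bounds are uniform over the parameter windows and independent of $L\geq M_0$, which is essential because \Cref{lem:index=1} will be applied at large $L$ in the subsequent construction underlying \Cref{prop:decreasing-wave}.
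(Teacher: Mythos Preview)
Your overall strategy (compactness via Schauder, boundary exclusion, then compute the index at $\tau=0$) matches the paper, and your treatment of $0<u<1$ and the gradient bound $|u_x|<M_0$ is essentially the same. The substantive difference is in the monotonicity step, where you actually misidentify the obstacle. You claim the nonlocal drift ``prevents differentiating the equation and applying the maximum principle to $u_x$ directly,'' and therefore propose the more elaborate \Cref{lem:Chris monot}-type argument. In fact the paper does precisely what you say cannot be done: it differentiates \eqref{eq:slab-nupos-tau} to obtain an equation for $w=u_x$, and evaluates at an interior maximum $x^*$ of $w$. The reason this works is that for degree theory you only need to exclude fixed points on $\partial\Omega$, not to prove monotonicity for \emph{all} fixed points. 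A fixed point in $\overline\Omega$ already satisfies $u_x\leq 0$, so if $u_x(x^*)=0$ then $x^*$ is automatically a global maximum of $w$; moreover $u_x\leq 0$ together with the convolution formula \eqref{e.v'} forces $v_x<0$ everywhere. At $x^*$ one has $w=w_x=0$, $w_{xx}\leq 0$, and since $\nu v_{xxx}=v_x-u_x=v_x$ there, the differentiated equation reduces to $0\leq \tau u(x^*)v_x(x^*)<0$, a contradiction.

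By contrast, your proposed route via \Cref{lem:Chris monot} only yields monotonicity on $(x_d,L)$, and your remark that ``the boundary values rule out local minima with values above the threshold'' is not substantiated: a local minimum of $u$ with value above $2/(2+1/\nu)$ is not obviously excluded by $u(-L)=1$, $u(L)=0$ alone. So your alternative, while perhaps completable, is both harder and has a gap as stated. For the index at $\tau=0$, your ``uniqueness plus nondegeneracy'' option is exactly what the paper does (uniqueness of the FKPP slab solution, then a comparison of the linearized eigenvalue problem against $k=-u^0_x$ to rule out eigenvalues $\geq 1$); the further homotopy to a linear problem would also work but requires re-checking the boundary exclusion along that second homotopy.
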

\begin{proof}
First, we note that $v$ solves the equation 
	\be
		-\nu v_{xx}(x)=u(x)-v(x) \text{ pointwise  for all } x\in (-L, L), 
	\ee
	so that $v$ is bounded in $C^{2, \alpha}\big((-L, L)\big)$ and continuous as a function of $u\in C^{0, \alpha}([-L, L])$ in the same space; if $u\in C^{1, \alpha}\big((-L, L)\big) $ then $v\in C^{3, \alpha}\big((-L, L)\big)$ and depends continuously on $u$.
	Clearly $\mathcal{F}_\tau$ maps $C^{1, \alpha}([-L, L])$ into $ C^{1, \alpha}([-L, L])$ and is continuous with respect to the parameter $\tau$ for the $C^{1, \alpha}$ norm.
	Moreover by the Schauder estimates $\mathcal{F}_\tau$ is compact for the $C^{1, \alpha}([-L, L])$ topology.

	Next we show that the Leray-Schauder index of $\mathcal{F}_\tau $ (see \cite[Proposition 14.5]{Zeidler}) is non-zero in the open set $\Omega$
	for a sufficiently large constant $M$.
	The proof consists of two main parts: 
	first, we show that the Leray-Schauder index of $\mathcal{F}_0$ is $1$, and, second, we establish that $\mathcal{F}_\tau$ has no fixed point on the boundary of $\Omega$ for $\tau\in (0,1]$.  The consequence of these two facts and the homotopy invariance property of the Leray-Schauder index is that~\eqref{eq:topologicaldegree1} holds.

\medskip
	\noindent\textbf{Step one: the Leray-Schauder index of $\mathcal{F}_0$ is $1$.}\\
	For  $\tau=0$, the equation~\eqref{eq:slab-nupos-tau} is the classical FKPP traveling wave equation on the slab $[-L, L]$. Hence, this step is essentially `folklore'; however, being unable to find a published proof, we include it here.  First, we note that it is known that this equation has a unique solution $u^0$ which is strictly decreasing and satisfies $\sup_{x\in (-L, L)}|u^0_x(x)|<+\infty$ (and hence lies in $\Omega$ upon taking $M$ large enough): the uniqueness and monotonicity follow from `sliding' arguments~\cite{BerestyckiNirenberg}, while the bound on $u^0_x$ follows from elliptic regularity theory. Since fixed points of $\mathcal{F}_\tau$ correspond exactly to solutions of~\eqref{eq:slab-nupos-tau}, we deduce that $\mathcal{F}_0$ has a unique fixed point $u^0\in\Omega$ and $u^0$ is strictly decreasing.

	Next we compute $i(\mathcal{F}_0, \Omega)$, the Leray-Schauder index of $\mathcal{F}_0$ in $\Omega$. Since $ u^0(x)$  is the unique fixed-point of $\mathcal{F}_0$ in $\Omega$, we have by \cite[Proposition 14.5]{Zeidler}:
	\begin{equation}\label{eq:topologicaldegree}
		i(\mathcal{F}_0, \Omega):=i(D\mathcal{F}_0(u^0), B(0, 1)) = (-1)^a, 
	\end{equation}
	where $D\mathcal{F}_0$ is the Fr\'echet derivative of $\mathcal{F}_0$ in $C^1([-L, L])$ and $a$ is the sum of algebraic multiplicities of all eigenvalues of $D\mathcal{F}_0(u^0)$ that are greater than $1$. This formula, however, is conditional on the fact that 1 is not an eigenvalue of $D\mathcal{F}_0(u^0)$, which we prove now.  
	The Fr\'echet derivative $D\mathcal{F}_0$ can be computed as $D\mathcal{F}_0(u) h=\tilde h$ where
	\begin{equation}\label{eq:topologicaldegree-Frechet1}
		\begin{cases}
			-\frac{1}{|\chi|}\tilde h_{xx}(x)-c\tilde h_x(x)+\tilde h(x) = h(x)\big(2-2u(x)\big),  & \text{ for all } x\in (-L, L), \\ 
			\tilde h(-L)=\tilde h(L)=0.
		\end{cases}
	\end{equation}
	Notice that the coefficient in the right-hand side of \eqref{eq:topologicaldegree-Frechet1} is a nonnegative function, therefore $D\mathcal{F}_0(u^0)$ is order-preserving.
	
	We now show that $D\mathcal{F}_0(u^0)$ does not have an eigenvalue larger than one.  We argue by contradiction, in which case, $D\mathcal{F}_0(u^0)$ has principal eigenvalue $\lambda \geq 1$.  Then there must exist $h(x)>0$ such that $D\mathcal{F}_0(u^0)h=\lambda h$; that is,
	\be\label{e.c82501}
		\begin{cases}
			-c h_x-\frac{1}{|\chi|} h_{xx} + h= h\dfrac{\big(2-2u^0\big)}{\lambda}\leq  h\big(2-2u^0\big),  &\quad \text{ in } (-L, L), \\ 
			 h(-L)= h(L)=0.
		\end{cases}
	\ee
	However the function $k(x):=-u^0_x(x)>0$ satisfies 
	\be\label{e.c82502}
		\begin{cases}
			-c k_x-\frac{1}{|\chi|} k_{xx} + k=  k\big(2-2u^0\big),  &\quad \text{ in }  (-L, L), \\ 
			 k(-L)=-u^0_x(-L)>0,  k(L)=-u^0_x(L)>0,
		\end{cases}
	\ee
	where the sign of $u^0_x(-L)$ and $u^0_x(L)$ are known from Hopf's Lemma.  Due to the boundary conditions, we have that $A h < k$ for $A$ sufficiently small.  Letting
	\be
		A_0 = \sup\{ A : Ah < k \text{ on } (-L,L)\},
	\ee
	we see that $A_0 h \leq k$ and there exists $x_0 \in(-L,L)$ such that $A_0 h(x_0) = k(x_0)$.  If $\lambda>1$, it follows that
	\be
		A_0 h_x(x_0) = k(x_0)
			\quad\text{ and }\quad
		A_0 h_{xx}(x_0) \leq k_{xx}(x_0),
	\ee
	and that violates~\eqref{e.c82501} and~\eqref{e.c82502} (notice that $A_0 h$ also satisfies~\eqref{e.c82501}). If $\lambda=1$, the strong maximum principle implies that $A_0h-k=:C$ is a negative constant. Taking the difference between \eqref{e.c82501} and \eqref{e.c82502} we find that 
	\be 
	C = \big(2-2u^0(x)\big)C \Longleftrightarrow 1-2u^0(x)\equiv 0, 
	\ee
	which is a contradiction since $u(L)=0$.
	We conclude that $D\mathcal F_0(u^0)$ does not have any eigenvalues larger than $1$.  We, thus, conclude that $a=0$ in~\eqref{eq:topologicaldegree} and
	\begin{equation}\label{eq:topologicaldegree2}
		i(\mathcal{F}_0, \Omega)=1. 
	\end{equation}\medskip

	\noindent\textbf{Step two: there is no fixed point in the boundary of $\Omega$}.\\
	Suppose by contradiction that there exists $\tau\in(0,1]$ and a function $u\in\partial\Omega$ such that $\mathcal{F}_\tau(u)=u$. Since $u\in \partial\Omega$, at least one of the following equalities holds: 
	\begin{align}
		\label{eq:boundary-1} u(x) & = 0 & \text{ for some  } x\in (-L, L), \\ 
		\label{eq:boundary-2}u(x) &= 1 & \text{ for some  } x\in (-L, L), \\ 
		\label{eq:boundary-3}u_x(x)&=0  & \text{ for some  } x\in [-L, L], \\ 
		\label{eq:boundary-4}u_x(x)&=-M & \text{ for some  } x\in [-L, L].
	\end{align}
	We will show that none of those equalities can hold, which is a contradiction. 

First, we note that elliptic regularity theory implies that, by increasing $M$, we can ensure that~\eqref{eq:boundary-4} does not hold.  And, due to the boundary condition on $u$, we have that~\eqref{eq:boundary-1} and~\eqref{eq:boundary-2} cannot occur unless~\eqref{eq:boundary-3} occurs. Thus, we conclude that  there exists $x^*\in [-L, L]$ such that $u_x(x^*)=0$.

	We now use Hopf's Lemma to find,
	\be
		u_x(-L)<0 \text{ and } u_x(L)<0, 
	\ee
	so we obtain that, actually, $x^*\in (-L, L)$. Defining $w(x):=u_x(x)$ we have
	\begin{equation}\label{eq:topologicaldegree-aprioribound-1}
		\begin{cases}
			-c w_x(x) -\frac{1}{|\chi|} w_{xx}(x) = \tau (w_x(x) v_x(x) + 2w(x)v_x(x) + u(x)v_x(x))\\
				\phantom{-c w_x(x) -\frac{1}{|\chi|} w_{xx}(x) = } 
				+ w(x)(1-2u(x)), & \text{ if } x\in(-L, L), \\
			w(-L) = u_x(-L),  w(L) = u_x(L).
		\end{cases}
	\end{equation}
	Evaluating~\eqref{eq:topologicaldegree-aprioribound-1} at $x=x^*$ (which is a local maximum for $w$) we have
	\begin{equation}\label{eq:utaudecreasing-2}
		0\leq \tau u(x^*) v_x(x^*).
	\end{equation}
	However, since $u$ is decreasing and non-constant, we have $v_x(x^*)<0$, therefore \eqref{eq:utaudecreasing-2} is a contradiction. Therefore \eqref{eq:boundary-3} cannot hold. This concludes the proof of the claim of Step 2 and, therefore, the proof of the lemma as well.
\end{proof}

We can immediately deduce from Lemma \ref{lem:index=1} the existence of a continuum of waves in $\tau$ and $c$  for a  fixed $L>0$.
\begin{corollary}\label{cor:continuum-fixed-points}
	For each $0<\bar c<+\infty$, there exists a connected set $\mathcal C\subset [0, \bar c]\times C^{1, \alpha}([-L, L])$ such that (i) for each $(c,u)\in \mathcal C$ the function $u$ solves \eqref{eq:slab-nupos} with speed $c$, and (ii) there is a pair $(c,u) \in \mathcal{C}$ for each $c\in [0,\bar c]$.
\end{corollary}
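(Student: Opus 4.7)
The plan is to apply the standard Leray--Schauder global continuation argument, with the wave speed $c$ as the homotopy parameter and Lemma \ref{lem:index=1} as the main input.

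First, I would reinterpret Lemma \ref{lem:index=1} with $\tau = 1$ fixed and $c$ as the varying parameter. Writing $\mathcal{F}_1^{(c)}$ for the operator defined by \eqref{eq:topologicaldegree-defFtau}, Lemma \ref{lem:index=1} (applied with $c_M = \bar c$ and $c_m>0$ small; the proof extends to $c_m=0$ with essentially no changes, since the boundary-fixed-point and index arguments only use Hopf's Lemma and the strong maximum principle, both of which remain valid at $c=0$) provides a single bounded open set $\Omega \subset C^{1,\alpha}([-L,L])$ such that $\mathcal{F}_1^{(c)}$ has no fixed point on $\partial \Omega$ and
\be
i\bigl(\mathcal{F}_1^{(c)}, \Omega\bigr) = 1 \qquad \text{for every } c \in [0, \bar c].
\ee
Moreover, the map $c \mapsto \mathcal{F}_1^{(c)}$ is continuous into $C^{1,\alpha}([-L,L])$ and each $\mathcal{F}_1^{(c)}$ is compact on $\bar{\Omega}$ by Schauder estimates; both facts were already used inside the proof of Lemma \ref{lem:index=1}.

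Second, I would invoke the classical Leray--Schauder continuation principle (see, e.g., \cite[Chapter 14]{Zeidler}): for the compact, continuous family $\{\mathcal{F}_1^{(c)}\}_{c \in [0,\bar c]}$ with index identically $1$ on $\Omega$ and no boundary fixed points, the fixed-point set
\be
\mathcal{S} := \bigl\{(c, u) \in [0, \bar c] \times \bar\Omega : u = \mathcal{F}_1^{(c)}(u)\bigr\}
\ee
contains a connected component $\mathcal{C}$ meeting both $\{0\} \times \bar\Omega$ and $\{\bar c\} \times \bar\Omega$. The underlying argument is by contradiction: if no such component existed, compactness of $\mathcal{S}$ would allow one to separate it into two disjoint closed sets by disjoint open neighborhoods, and then the degree of $I - \mathcal{F}_1^{(c)}$ on the slice containing the $c=0$ endpoint would jump from $1$ at $c = 0$ to $0$ at $c = \bar c$, violating homotopy invariance.

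Since $\mathcal{C}$ is connected and its continuous projection onto $[0, \bar c]$ contains both endpoints, the projection is all of $[0, \bar c]$; this is condition (ii), while condition (i) is immediate from the definition of $\mathcal{S}$. The main (minor) obstacle is merely verifying that the hypotheses of the continuation theorem hold uniformly in $c$, which is essentially free once Lemma \ref{lem:index=1} is in hand, since its proof is robust to varying $c$ over a compact interval and does not depend on the particular value of $\tau$.
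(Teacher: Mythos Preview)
Your proposal is correct and takes essentially the same approach as the paper: both use Lemma~\ref{lem:index=1} (with $\tau=1$ fixed and $c$ as the homotopy parameter) to feed directly into the global continuation principle of \cite[Theorem~14.C]{Zeidler}, obtaining a continuum of fixed points connecting $\{0\}\times\Omega$ to $\{\bar c\}\times\Omega$. Your explicit remark that the hypotheses of Lemma~\ref{lem:index=1} extend to $c_m=0$ is a useful clarification that the paper glosses over.
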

\begin{proof}
    Let $\bar c>0$ be given. By Lemma \ref{lem:index=1} there exists a $M>0$ such that for $c=0$, the index of $\mathcal F_1$ (defined in \eqref{eq:topologicaldegree-defFtau}) in the set $\Omega$ defined in \eqref{eq:topologicaldegree-Omega} is equal to 1. Moreover it has been shown that there are no fixed-points of $\mathcal F_1$ with $u\in\partial\Omega$ and $\mathcal F_1$ is continuous with respect to $c$. By a direct application of the global continuation principle \cite[Theorem 14.C]{Zeidler}, there exists a continuum $\mathcal C$ composed with fixed-points of $\mathcal F_1$, connecting $\{0\}\times \Omega$ to $\{\bar c\}\times \Omega$. Since fixed points of $\mathcal{F}_\tau$ correspond exactly to solutions of~\eqref{eq:slab-nupos-tau}, Corollary \ref{cor:continuum-fixed-points} is proved. 
\end{proof}

\begin{lemma}\label{lem:sub-super-slab-0}
	Let $u(x)$ be a decreasing solution to \eqref{eq:slab-nupos}. Then $\underline{\varphi}(x)\leq u(x)\leq \overline{\varphi}(x)$, where $\underline{\varphi}(x)$ is the solution to the FKPP equation
	\begin{equation} \label{eq:slab-sub}
		\begin{cases} 
			-c\underline{\varphi}_x(x) -\frac{1}{|\chi|} \underline{\varphi}_{xx}(x) = \underline{\varphi}(x)\left(1-\left(\frac{\nu + 1}{\nu}\right)\underline{\varphi}(x)\right), & x\in (-L, L), \\
			\underline{\varphi}(-L)=\sfrac{\nu}{(\nu+1)}, ~~\underline{\varphi}(L)=0,
		\end{cases}
	\end{equation}
	and $\overline{\varphi}(x) $ is the solution to the FKPP equation  
	\begin{equation}\label{eq:slab-super}
		\begin{cases} 
			-\left(c-\frac{1}{\sqrt{\nu}}\right)\overline{\varphi}_x(x) -\frac{1}{|\chi|} \overline{\varphi}_{xx}(x) = \left(\frac{\nu + 1}{\nu}\right)\overline{\varphi}(x)\left(1-\overline{\varphi}(x)\right), & x\in (-L, L), \\
			\overline{\varphi}(-L)=1,~~ \overline{\varphi}(L)=0.
		\end{cases}
	\end{equation}
	In particular if $\varepsilon\in (0, 1)$ and  $L$ is sufficiently large, we have 
	\begin{equation}\label{eq:slab-sub-0}
		u(0)\geq (1-\varepsilon)\, \frac{\nu}{1+\nu} ,
			\qquad \text{ if } c< \frac{2}{\sqrt{|\chi|}} \sqrt{\frac{\nu}{\nu+1}},
	\end{equation}
	and
	\begin{equation}\label{eq:slab-super-0}
		u(0)\leq e^{-\frac{L|\chi|}{2}\left(c-\frac{1}{\sqrt{\nu}}+\sqrt{\left(c-\frac{1}{\sqrt{\nu}}\right)^2-4\frac{1}{|\chi|}\left(\frac{\nu + 1}{\nu}\right)}\right)},
			\qquad\text{ if } c-\frac{1}{\sqrt{\nu}}\geq \frac{2}{\sqrt{|\chi|}}\sqrt{\frac{\nu + 1}{\nu}}.
	\end{equation}

\end{lemma}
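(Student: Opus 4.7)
The plan is to reduce both inequalities to comparisons with scalar FKPP slab problems by controlling the chemotaxis-induced contributions $v_x u_x$ and the $v$-dependent reaction term in the two opposite directions. First, using the second equation $-\nu v_{xx} = u - v$, one rewrites $(u v_x)_x = v_x u_x + u(v-u)/\nu$, so that \eqref{eq:slab-nupos} becomes
\begin{equation*}
-(c+v_x)\, u_x - \tfrac{1}{|\chi|} u_{xx} = u\!\left(\tfrac{\nu+v}{\nu} - \tfrac{\nu+1}{\nu} u\right),
\end{equation*}
the analog of \eqref{e.tw2}. Since $u$ is decreasing on $[-L, L]$, the extension $\bar u$ in \eqref{e.240943} is decreasing on $\R$, so the convolution $v = \mathcal K_\nu * \bar u$ is also decreasing with $0 \leq v \leq 1$, and differentiating under the convolution together with $\|\mathcal K_\nu'\|_{L^1} = 1/\sqrt\nu$ yields the a priori bound $-1/\sqrt\nu \leq v_x \leq 0$.

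For the lower bound $u \geq \underline{\varphi}$, I would rearrange the identity above as
\begin{equation*}
-c u_x - \tfrac{1}{|\chi|} u_{xx} = v_x u_x + u\!\left(\tfrac{\nu+v}{\nu} - \tfrac{\nu+1}{\nu} u\right) \;\geq\; u\!\left(1 - \tfrac{\nu+1}{\nu} u\right),
\end{equation*}
using $v_x u_x \geq 0$ (product of two nonpositive numbers) and $v \geq 0$. Thus $u$ is a supersolution of \eqref{eq:slab-sub} with $u(\pm L) \geq \underline{\varphi}(\pm L)$, and the standard comparison principle for concave KPP slab problems gives $u \geq \underline{\varphi}$. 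Symmetrically, using $v_x u_x \leq -u_x/\sqrt\nu$ and $v \leq 1$,
\begin{equation*}
-\left(c - \tfrac{1}{\sqrt\nu}\right) u_x - \tfrac{1}{|\chi|} u_{xx} \;\leq\; \tfrac{\nu+1}{\nu} u(1-u),
\end{equation*}
making $u$ a subsolution to \eqref{eq:slab-super} with matching boundary values, so that $u \leq \overline{\varphi}$.

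The quantitative bounds at $x = 0$ then follow from classical ODE analysis of the auxiliary slabs. For \eqref{eq:slab-super-0}, the hypothesis is exactly the supercriticality condition making the characteristic polynomial $\lambda^2/|\chi| + (c - 1/\sqrt\nu)\lambda + (\nu+1)/\nu = 0$ admit two real negative roots $\lambda_- \leq \lambda_+$; then $\psi(x) = e^{\lambda_-(x+L)}$ is a supersolution of \eqref{eq:slab-super} (since $\tfrac{\nu+1}{\nu}\overline{\varphi}(1-\overline{\varphi}) \leq \tfrac{\nu+1}{\nu}\overline{\varphi}$) dominating $\overline{\varphi}$ on $[-L,L]$, and evaluation at $x = 0$ yields the stated exponential bound $\overline{\varphi}(0) \leq e^{\lambda_- L}$. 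For \eqref{eq:slab-sub-0}, the hypothesis $c < (2/\sqrt{|\chi|})\sqrt{\nu/(\nu+1)}$ places the problem strictly below the FKPP critical speed $2/\sqrt{|\chi|}$ for \eqref{eq:slab-sub}, and by the classical theory of subcritical FKPP on slabs $\underline{\varphi}(0) \to \nu/(\nu+1)$ as $L \to \infty$. A concrete way to see this is to patch a nearly flat profile close to $\nu/(\nu+1)$ on most of $[-L,L]$ with a boundary layer near $x = L$ driving the profile to zero; the specific constant in the hypothesis provides the slack needed for the flat part to satisfy the subsolution inequality. Thus $u(0) \geq \underline{\varphi}(0) \geq (1-\varepsilon)\,\nu/(\nu+1)$ for $L$ large.

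The main technical point is the comparison principle used in the two sandwich estimates: because neither nonlinearity $s(1 - \tfrac{\nu+1}{\nu}s)$ nor $\tfrac{\nu+1}{\nu}s(1-s)$ is monotone on the whole range $[0,1]$ in which $u$ can live, the naive maximum principle does not apply directly. I would handle this by monotone iteration against the resolvent of the shifted linear operator $-c\partial_x - \tfrac{1}{|\chi|}\partial_x^2 + M\,\mathrm{Id}$ with $M$ sufficiently large, which is strictly coercive and hence inherits the linear comparison principle; the KPP structure of the nonlinearities guarantees that the iterates starting from $0$ (resp.\ $1$) are monotonically increasing (resp.\ decreasing) and converge to the unique decreasing stationary solution, which is therefore sandwiched by the sub/supersolution constructed from $u$.
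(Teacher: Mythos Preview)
Your proof is correct and follows essentially the same approach as the paper: you derive the same sub/supersolution inequalities for $u$ with respect to \eqref{eq:slab-sub} and \eqref{eq:slab-super} (the paper merely asserts these without writing out the computation you give), you use the same exponential supersolution $e^{\lambda_-(x+L)}$ for \eqref{eq:slab-super-0}, and you invoke the same subcriticality argument for \eqref{eq:slab-sub-0}. The paper makes the subcritical convergence $\underline\varphi(0)\to \nu/(\nu+1)$ slightly more explicit by writing down a compactly supported subsolution of the form $\eps_R e^{-\lambda_R x}\cos(\pi x/(2R))$ and then passing to the limit $L\to\infty$, whereas you appeal to the classical theory; and the paper simply asserts that the two auxiliary FKPP problems ``satisfy the comparison principle'' without the monotone-iteration justification you supply, but these are differences of exposition rather than of strategy.
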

\begin{proof}
	Indeed $u(x)$ is a subsolution to \eqref{eq:slab-super} and a supersolution to \eqref{eq:slab-sub}. Since both \eqref{eq:slab-super} and \eqref{eq:slab-sub} satisfy the comparison principle, we have
	\be
	\label{e.08290201}
		\underline{\varphi}(x)\leq u(x)\leq \overline{\varphi}(x) \text{ for all } x\in (-L, L). 
	\ee

	To obtain \eqref{eq:slab-sub-0}, it is enough to show that $\underline{\varphi}(x) $ converges to $\frac{\nu}{\nu+1}$ locally uniformly as $L\to+\infty$ since $c$ is smaller than the minimal speed of the FKPP equation~\eqref{eq:slab-super} (when defined on $\R$), which is $\sfrac{2}{\sqrt{|\chi|}} \sqrt{\sfrac{\nu}{(\nu+1)}}$.
	We briefly outline why this is true.  First, it is simple to construct a subsolution to~\eqref{eq:slab-sub} of the form
	\be
		\eps_R e^{-\lambda_R x} \cos\left(\frac{\pi x}{2 R}\right)
			\quad\text{ on } (-R, R),
	\ee
	with $\eps_R$, $\lambda_R$, and $R$ chosen depending only on $\sfrac{2}{\sqrt{|\chi|}}\sqrt{\sfrac{\nu}{(\nu+1)}} - c$.  Hence, $\underline \phi(0) > \eps_R$ for all $L\geq R$.  On the other hand, after taking $L\to\infty$, $\underline \phi$ converges to some function $\underline \phi_\infty$ solving~\eqref{eq:slab-sub} on $\R$.  Since $c$ is smaller than the minimal speed, $\phi$ must be a trivial solution.  By above, $\underline\phi_\infty>0$, and, hence, can only be $\sfrac{\nu}{(\nu+1)}$.
		
	To obtain \eqref{eq:slab-super-0} we take $c$ satisfying the inequality in \eqref{eq:slab-super-0} and note that the function
	\be
		\overline{\overline{\varphi}}(x)
			:=Ae^{-\frac{|\chi|}{2}\left(c-\frac{1}{\sqrt{\nu}}+\sqrt{\left(c-\frac{1}{\sqrt{\nu}}\right)^2-4\frac{1}{|\chi|}\left(\frac{\nu + 1}{\nu}\right)}\right)x}
	\ee
	is a super-solution for \eqref{eq:slab-super} as long as $\overline{\overline{\varphi}}(\pm L) \geq \bar{\varphi}(\pm L)$.  This last condition is equivalent to the condition: 	\be
		A\geq e^{-\frac{L|\chi|}{2}\left(c-\frac{1}{\sqrt{\nu}}+\sqrt{\left(c-\frac{1}{\sqrt{\nu}}\right)^2-4\frac{1}{|\chi|}\left(\frac{\nu + 1}{\nu}\right)}\right)}.
	\ee
	Hence, taking $A$ to be exactly equal to the quantity on the right-hand side of the previous line, we find $\bar\varphi \leq \overline{\overline \varphi}$.  Together with~\eqref{e.08290201}, this implies~\eqref{eq:slab-super-0}.
	Lemma \ref{lem:sub-super-slab-0} is proved.
\end{proof}

As a consequence of Corollary \ref{cor:continuum-fixed-points} and Lemma \ref{lem:sub-super-slab-0}, we obtain the following result:
\begin{proposition}\label{prop:existence-slab-delta} 
	For all $\delta \in (0, \sfrac{\nu}{(\nu+1)})$, there exists $L_\delta>0$ such that for all $L\geq L_\delta$ there exists a solution $(u,c_\delta)$ to \eqref{eq:slab-nupos} such that $u\in C^1([-L, L])$ is decreasing and satisfies $u(0)=\delta$, 
    and the speed satisfies,
		\be
		c_\delta\in\left(\underline{c}, \bar c\right),
	\ee
	where $\underline{c}$ and $\bar{c}$ are given by,
			\be
		\label{e.under c bar c}
	\underline{c}=	\frac{1}{\sqrt{|\chi|}}\sqrt{\frac{\nu}{\nu+1}} \quad\text{ and }\bar c = \frac{1}{\sqrt \nu} + \frac{2}{\sqrt{|\chi|}} \sqrt{ \frac{\nu+1}{\nu}}.
	\ee
\end{proposition}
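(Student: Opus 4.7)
The plan is to combine the continuum provided by \Cref{cor:continuum-fixed-points} with the sub- and super-solution bounds of \Cref{lem:sub-super-slab-0} via an intermediate value argument on the connected set of slab solutions. Fix $\delta \in (0,\sfrac{\nu}{(\nu+1)})$, take $\underline{c}, \bar c$ as in \eqref{e.under c bar c}, and choose $\varepsilon \in (0,1)$ small enough that $(1-\varepsilon)\tfrac{\nu}{\nu+1} > \delta$; this is possible because $\delta < \sfrac{\nu}{(\nu+1)}$.

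First I would apply \Cref{cor:continuum-fixed-points} with this $\bar c$ to produce a connected set $\mathcal{C} \subset [0,\bar c] \times C^{1,\alpha}([-L,L])$ of fixed points of $\mathcal{F}_1$, hence of solutions of \eqref{eq:slab-nupos}, with a representative at every $c \in [0,\bar c]$. The construction via the global continuation principle keeps $\mathcal{C}$ inside the open set $\Omega$ of \eqref{eq:topologicaldegree-Omega} because Step 2 of the proof of \Cref{lem:index=1} rules out fixed points on $\partial\Omega$ for any $c \in [0,\bar c]$; in particular every $u$ appearing in $\mathcal{C}$ is strictly decreasing on $[-L,L]$. I would then choose $L_\delta$ large enough to realize both alternatives of \Cref{lem:sub-super-slab-0} for $L \geq L_\delta$: since $\underline{c} < 2\underline{c} = \tfrac{2}{\sqrt{|\chi|}}\sqrt{\tfrac{\nu}{\nu+1}}$, estimate \eqref{eq:slab-sub-0} applied with our fixed $\varepsilon$ gives $u(0) \geq (1-\varepsilon)\tfrac{\nu}{\nu+1} > \delta$ for every decreasing solution with $c=\underline{c}$; and since $\bar c - \sfrac{1}{\sqrt{\nu}} = \tfrac{2}{\sqrt{|\chi|}}\sqrt{\tfrac{\nu+1}{\nu}}$, the square root in the exponent of \eqref{eq:slab-super-0} vanishes at $c=\bar c$, giving the decay $u(0) \leq \exp\bigl(-L\sqrt{|\chi|(\nu+1)/\nu}\bigr)$, which is strictly less than $\delta$ once $L$ exceeds a threshold depending on $\delta,\nu,\chi$.

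To conclude, I would define the continuous map $\Psi:\mathcal{C}\to\mathbb{R}$ by $\Psi(c,u) = u(0)-\delta$. By the previous paragraph $\Psi$ is strictly positive at every element of $\mathcal{C}$ whose first coordinate is $\underline{c}$ and strictly negative at every element whose first coordinate is $\bar c$; both sorts of elements exist because $\mathcal{C}$ meets the vertical line $\{c\}\times C^{1,\alpha}$ for every $c \in [0,\bar c]$. Connectedness of $\mathcal{C}$ makes $\Psi(\mathcal{C})$ a connected subset of $\mathbb{R}$ containing both signs, so the intermediate value theorem yields $(c_\delta,u_\delta)\in\mathcal{C}$ with $u_\delta(0)=\delta$. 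The same two bounds then preclude $c_\delta\in\{\underline{c},\bar c\}$ by direct contradiction, giving the strict inclusion $c_\delta\in(\underline{c},\bar c)$ claimed in the proposition.

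The only subtlety, rather than a serious obstacle, is verifying that the continuum really lies inside $\Omega$ so that \Cref{lem:sub-super-slab-0}, which requires $u$ to be decreasing, can be invoked at every element of $\mathcal{C}$; this is already furnished by the boundary-free analysis in Step 2 of the proof of \Cref{lem:index=1}, provided the constant $M$ defining $\Omega$ is chosen uniformly in $c \in [0,\bar c]$ via elliptic regularity.
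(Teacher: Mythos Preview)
Your argument is essentially the paper's: both pull the continuum $\mathcal C$ from \Cref{cor:continuum-fixed-points}, evaluate at the origin to get a continuous real-valued map, and use the two estimates of \Cref{lem:sub-super-slab-0} at $c=\underline c$ and $c=\bar c$ to trap $\delta$ in the image via connectedness. One small oversight in your last step: precluding $c_\delta\in\{\underline c,\bar c\}$ does not by itself force $c_\delta\in(\underline c,\bar c)$, since $\mathcal C$ also covers $[0,\underline c)$; but this is trivially patched by noting that \eqref{eq:slab-sub-0} actually applies for every $c<2\underline c$, so $\Psi>0$ on all of $\mathcal C\cap(\{c\le\underline c\}\times\Omega)$ and hence $c_\delta>\underline c$.
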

\begin{proof}
	Fix $\eps>0$ such that $\delta < (1-\eps) (\sfrac{\nu}{(\nu+1)})$.  
	Let $\mathcal{C}$ be the connected set of solutions  $ (c,u)\in \left[0, \bar c\right]\times C^{1, \alpha}([-L, L])$ to \eqref{eq:slab-nupos} provided by Corollary \ref{cor:continuum-fixed-points}. 	The map $\Phi:  \mathcal{C}
	\to \mathbb R$ defined by $\Phi(c,u) = u(0)$ 
	is continuous.  Hence, the image $\Phi(\mathcal{C})$ is connected.

	Let $\bar u$ be a solution corresponding to $\bar c$ and let $\underline u$ be a solution corresponding to $\underline c$ such that $(\bar c, \bar u), (\underline c, \underline u) \in \mathcal{C}$.  These exist due to \Cref{cor:continuum-fixed-points}.  Next, let $L$ be large enough so that the conclusion of Lemma \Cref{lem:sub-super-slab-0} holds; thus, we have,
	\be
		\Phi(\underline c, \underline u)
			\leq \exp\left(-\frac{L|\chi|}{2}\left(\bar c-\frac{1}{\sqrt{\nu}}+\sqrt{\left(\bar c-\frac{1}{\sqrt{\nu}}\right)^2-\frac{4}{|\chi|}\left(\frac{\nu + 1}{\nu}\right)}\right)\right)
		\quad\text{and}\quad
		\Phi(\bar c, \bar u)
			\geq (1-\varepsilon) \frac{\nu}{\nu+1}.
	\ee
	Hence, due to the fact that $\Phi(\mathcal{C})$ is connected, we deduce that
	\begin{equation}
		\left[
			\exp\left(-\frac{L|\chi|}{2}
				\left(\bar c-\frac{1}{\sqrt{\nu}}+\sqrt{\left(\bar c-\frac{1}{\sqrt{\nu}}\right)^2
				-\frac{4}{|\chi|}
				\left(\frac{\nu + 1}{\nu}\right)}\right)\right),
			(1-\varepsilon) \frac{\nu}{\nu+1}
		\right]
		\subset
		\Phi(\mathcal{C}).
	\end{equation}
	Thus, by further increasing $L$ as to guarantee 
		\be
		\exp\left(-\frac{L|\chi|}{2}
				\left(\bar c-\frac{1}{\sqrt{\nu}}+\sqrt{\left(\bar c-\frac{1}{\sqrt{\nu}}\right)^2
				-\frac{4}{|\chi|}
				\left(\frac{\nu + 1}{\nu}\right)}\right)\right)
			< \delta
	\ee
holds, we find that	there exists $(c, u) \in\mathcal{C}$ with $\Phi(c,u)=u(0)=\delta$.  This completes the proof.
\end{proof}
We are now in the position to prove \Cref{prop:decreasing-wave}.
\begin{proof}[Proof of \Cref{prop:decreasing-wave}]

	Fix $\nu>0$, $\chi<0$, and $\delta\in \left(0, \frac{\nu}{\nu+1}\right)$. By Proposition \ref{prop:existence-slab-delta}, for $L$ sufficiently large,  there exists a $(c_L, u_L, v_L)$ with 
	\be\label{e.c80901}
		c_L\in \left(\underline{c}, \bar{c}\right), \text{where $\underline{c}$ and $\bar c$ are given by	~\eqref{e.under c bar c},}
	\ee 
	such that $u$ solves \eqref{eq:slab-nupos} and
\be
	u_L(0)=\delta .
\ee

	By a standard diagonalization procedure, up to taking a subsequence, there exists $(c,u,v)$ and a sequence $(c_{L_n}, u_{L_n}, v_{L_n})$ with $L_n\to +\infty$ such that $c_{L_n}\to c$ and  $u_{L_n}\to u$ and $v_{L_n}\to v$ locally uniformly in $C^2$.  The convergence implies that $(c, u, v)$ solves \eqref{e.tw}.  The limit $u(+\infty)$ follows from \Cref{lem:exp decay} and the limit $u(-\infty) = 1$ follows easily by standard arguments (see, e.g., the arguments used to establish that $u(-\infty) =1$ in the proof of \Cref{t.lower.hyp}) and the facts that $u$ is monotonic and $u(0) = \delta$.

	The remaining thing to prove~\eqref{eq:T5.1-Znonempty}: the smallness of $v_x + c$ at a point for $|\chi|$ sufficiently large.  This is the main difficulty in the proof.  We do this now.  Actually, we establish this at a point $x_n \in (-L_n,L_n)$ and argue that $x_n$ remains in a bounded interval around the origin as $n\to\infty$.  For the remainder of the proof, we'll denote $(c_{L_n}, u_{L_n}, v_{L_n})$  as $(c_n, u_n, v_n)$. In addition, we now we clarify what we mean by $|\chi|$ sufficiently large.  First, we recall from Proposition \Cref{p.speed.hyp} that  $c^*_{\hypnu}>0$ holds. Now we take $|\chi|$ sufficiently large so that \Cref{t.lower.hyp}.(i) guarantees that any traveling wave solution of~\eqref{e.hyp} satisfies
	\be\label{e.c80906}
		c \geq \frac{c^*_{\hypnu}}{2} > 0.
	\ee
In addition, we take $|\chi|$ large enough so that	\be\label{e.c80907}
		\frac{c^*_{\hypnu}}{8}
			\geq \sqrt{\frac{4}{|\chi|} \frac{\nu+1}{\nu}}
	\ee
holds.	In the remainder of the proof, $|\chi|$ will not be further increased.

We shall now establish~\eqref{eq:T5.1-Znonempty}. To this end, let
\begin{equation}
	\tilde u(x):=u_n(x)e^{|\chi|\frac{c_n x+v_n(x)}{2}}. 
\end{equation}
Then $\tilde u$ satisfies the equation
\begin{equation}\label{eq:slab-tildenupos}
		- \frac{1}{|\chi|} \tilde u_{xx} =\tilde u\left(1+ \frac{v_n(x)}{\nu}-\left(\frac{\nu + 1}{\nu}\right)u_n(x)-\frac{((v_n)_x+c_n)^2}{4\frac{1}{|\chi|}}\right)
		\quad\text{ in } (-L_n,L_n).
\end{equation}
Let us take $L_n\geq \left(1-\frac{2}{|\chi|}\log\frac{\delta}{2}\right)\underline{c}^{-1}$ and consider any $x\leq  \left(\frac{2}{|\chi|}\log\frac{\delta}{2}-1\right)(\underline{c})^{-1}$. 
From the definition of $\tilde{u}$, 	that $u_n\leq 1$ and $v_n \leq 1$, the fact that $x<0$ and the upper bound on $c_n$ from~\eqref{e.c80901},  we find,
\be\label{eq.22090201}
\tilde{u}(x)\leq e^{\frac{|\chi|}{2}(c_nx+1)}\leq e^{\frac{|\chi|}{2}(\underline{c}x+1)}
\leq e^{\frac{|\chi|}{2}(\frac{2}{|\chi|}\log\frac{\delta}{2})}=\frac{\delta}{2}.
\ee
Thus, we have,
\be\label{e.c81301}
		\sup_{x\in\left(-L_n, \left(\frac{2}{|\chi|}\log\frac{\delta}{2}-1\right)(\underline{c})^{-1}\right)}\tilde u(x)\leq \frac{\delta}{2}<\delta,
			\qquad \tilde u(0) = \delta e^{\frac{|\chi|}{2}v(0)}\geq \delta,
			\qquad \tilde u(L_n)=0<\delta, 
	\ee
which proves that $\tilde u$ has an interior global maximum $ x_n$ satisfying,
	\begin{equation}\label{eq:x*-lower}
		x_n\in \left(\left(\frac{2}{|\chi|}\log\frac{\delta}{2}-1\right)\left(\underline{c}\right)^{-1},L_n\right). 		
	\end{equation}

Testing~\eqref{eq:slab-tildenupos} at the location of the maximum $x_n$, we have
\be
	0\leq -\frac{1}{|\chi|} \tilde u_{xx}
		= \tilde u\left(1+ \frac{v_n(x_n)}{\nu}-\left(\frac{\nu + 1}{\nu}\right)u_n(x_n)-\frac{|\chi|((v_n)_x(x_n)+c_n)^2}{4}\right), 
\ee
so that
    \be\label{e.67517}
	\frac{|\chi|(v_x(x_n)+c_n)^2}{4} \leq \frac{\nu + 1}{\nu}.
\ee

	Next, we show that $x_n$ is contained  in a bounded interval around the origin.  The fact that $x_n$ cannot be too negative follows from \eqref{eq:x*-lower}. We now show that $x_n$ cannot be too positive.  First notice that, due to~\eqref{e.c80906}, we may assume that
	\be
		c_n \geq \frac{c_{\hypnu}^*}{4},
	\ee
	up to increasing $n$.  Combining this with~\eqref{e.c80907} and~\eqref{e.67517} yields
	\be\label{e.c80908}
		|v_x(x_n) + c_n|
			\leq \frac{c_n}{2}.
	\ee
	From~\eqref{e.kernel} and \Cref{lem:exp decay}, it is clear that there are positive constants $C', \theta'>0$, independent of $L$ and $\chi$ (recall that we have already restricted to $\chi$ sufficiently large, so there is no dependence on $\chi$ through \Cref{lem:exp decay}), such that
	\be\label{e.c80903}
		|v_x|\leq C' e^{-\theta'x}.
	\ee 
	Combining this with~\eqref{e.c80908}, we find that either $x_n \leq 0$ or
	\be
		c_n - C' e^{-\theta' x_n}
			\leq \frac{c_n}{2},
	\ee
	which yields
	\be
		\frac{1}{C'} e^{\theta' x_n}
			\leq \frac{2}{c_n}
			\leq \frac{8}{c_{\hypnu}^*}.
	\ee
	Taking the logarithm of both sides, we find
	\be\label{e.c80909}
		x_n \leq
			\frac{1}{\theta'} \log\Big( \frac{8 C'}{c_{\hypnu}^*}\Big),
	\ee
	which is the desired bound.

	Up to passing to a further subsequence, there is $x_*$ such that $x_n \to x_*$ as $n\to\infty$.  From the $C^1$ convergence (recall that $\|v_n\|_{W^{2,\infty}} \leq C(1 + \sfrac 1\nu)$) of $v_n$ to $v$, it follows that
	\be
		(v_x(x_*) + c)^2
			\leq \frac{4}{|\chi|} \frac{\nu + 1}{\nu}.
	\ee
	The bounds on $x_*$ follow directly from~\eqref{eq:x*-lower} and~\eqref{e.c80909}.  This completes the proof.
	\end{proof}

\ssection{Converging to the porous medium wave: \Cref{t.upper.pm}}\label{s.upper.pm}

We now show that discontinuous traveling wave solutions to~\eqref{e.hyp} converge to traveling wave solutions to~\eqref{e.pm}. 
We work with the solutions constructed in \Cref{t.upper.hyp}.  We point out the properties that we use: first,
\be\label{e.c72101}
	u_\nu \text{ is nonincreasing},
	\quad
	(v_\nu)_x(0) = -c_\nu,
	\quad\text{ and }\quad
	u_\nu \equiv 0 \text{ in } (0,\infty).
\ee
The first is due to \Cref{t.upper.hyp}.(i), the second follows from~\eqref{e.c81605}, and the third follows from \Cref{t.upper.hyp}.(ii) and \Cref{prop:cardZleq1}. 
Additionally, we have that
\be\label{e.c72102}
	-c_\nu < (v_\nu)_x(x) < 0 \quad\text{ for } x\in (-\infty,0)
	\quad\text{ and }\quad
	u_\nu \text{ is smooth in } (-\infty,0).
\ee

Next, we observe that $u$ and $v$ are close, depending on $\nu$.  Indeed, as $(c,u,v)$ is constructed as a limit of solutions to~\eqref{e.tw}, we may apply the estimate \Cref{lem:u-v}
\be\label{e.c72104}
	\|u_\nu-v_\nu\|_{L^\infty} \leq C (\sqrt c_\nu + 1) \nu^{\sfrac{1}{8}},
\ee
for some universal $C>0$.  Similarly, we also have, from \Cref{lem:apriori bd nu to 0} and \Cref{c.holder},
\be\label{e.c72106}
	\int |u_\nu(1-u_\nu) \log u_\nu| \dd x\leq c_\nu
	\quad\text{ and }\quad
	[v_\nu]_{C^{\sfrac{1}{2}}}, 
	~~ \|(v_\nu)_x \|_{L^2} \leq \sqrt{c_\nu}.
\ee

We begin by obtaining an upper bound on $c$.  Note that this does not follow from the previously established upper bound in \Cref{p.speed.hyp}.  For notational ease, we drop the $\nu$ subscript here.
\begin{lemma}\label{lem:c leq 2}
Suppose that $(c,u,v)$ is a traveling wave solution to~\eqref{e.hyp} satisfying~\eqref{e.c72101} and~\eqref{e.c72102}.  Then
	\be
		v(x) \geq (c+x)|x| \quad\text{ for all } x\in (-c,0)
		\qquad\text{ and } \qquad
		c \leq 2.
	\ee
\end{lemma}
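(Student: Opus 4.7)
The plan is to first observe that the bound $c \leq 2$ follows immediately from the pointwise inequality by choosing $x = -c/2 \in (-c, 0)$, which gives $c^2/4 \leq v(-c/2) \leq 1$. So the main task is to prove $v(x) \geq -x(c+x)$ on $(-c,0)$. I will do this via a convexity argument for the auxiliary function $w(x) := v(x) + cx + x^2$ on $[-c,0]$. Using~\eqref{e.c72101} and $v \geq 0$, one has $w(0) = v(0) \geq 0$ and $w(-c) = v(-c) \geq 0$, and crucially $w'(0) = v_x(0) + c = 0$. Once $w$ is shown to be convex on $[-c,0]$, strict convexity together with $w'(0) = 0$ forces $w$ to be nonincreasing there, so $w(x) \geq w(0) = v(0) \geq 0$ for all $x \in [-c,0]$.

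The main step, and the principal obstacle, is the lower bound $v_{xx} \geq -2$ on $(-c,0)$. To obtain it, I use~\eqref{e.c72102}, which gives that $u$ is smooth on $(-\infty, 0)$ with $c + v_x > 0$ there, and~\eqref{e.c72101}, which gives $u_x \leq 0$ there. Evaluating~\eqref{e.hyp2} at any $x \in (-\infty, 0)$ then gives
\[
    0 \;\leq\; -(c + v_x)\, u_x \;=\; \frac{u}{\nu}\bigl(\nu + v - (\nu + 1)\, u\bigr),
\]
so $u \leq (\nu + v)/(\nu + 1)$ on $(-\infty, 0)$ — a partial monotonicity estimate reminiscent of \Cref{l.no_local_min}. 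Combining this with the second equation of~\eqref{e.hyp} yields
\[
    v_{xx} \;=\; \frac{v - u}{\nu} \;\geq\; \frac{v - 1}{\nu + 1} \;\geq\; -\frac{1}{\nu + 1} \;>\; -1,
\]
using $v \leq 1$ and $\nu > 0$. Hence $w''(x) \geq 2 - 1/(\nu+1) \geq 1$ on $(-c, 0)$, so $w$ is strictly convex.

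Combining the strict convexity with $w'(0) = 0$, the strict monotonicity of $w'$ forces $w'(x) < 0$ for $x \in [-c, 0)$, so $w$ is strictly decreasing on $[-c, 0]$ and $w(x) \geq w(0) = v(0) \geq 0$ for all $x \in [-c, 0]$. This gives $v(x) \geq -x(c+x) = (c+x)|x|$ on $(-c, 0)$; evaluating at $x = -c/2$ yields $c^2/4 \leq 1$, i.e., $c \leq 2$. The only delicate point is securing the $v_{xx}$ lower bound, which requires the smoothness of $u$ on $(-\infty, 0)$ together with the strict inequality $v_x > -c$ on this interval, both supplied by~\eqref{e.c72101}–\eqref{e.c72102}; everything else is a clean consequence of convexity.
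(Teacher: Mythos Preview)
Your proof is correct and takes a genuinely different route from the paper. The paper integrates the first equation of~\eqref{e.hyp} over $[x,0]$; the boundary terms at $0$ cancel thanks to $v_x(0)=-c$, leaving $(c+v_x(x))u(x)=\int_x^0 u(1-u)$, and since $u$ is nonincreasing and bounded by $1$ the right side is at most $|x|u(x)$, yielding $-v_x(x)\ge c-|x|$. A mean value argument then gives $v(x)\ge(c-|x|)|x|$. Your argument instead works pointwise: from~\eqref{e.hyp2} and the sign conditions in~\eqref{e.c72101}--\eqref{e.c72102} you extract $u\le(\nu+v)/(\nu+1)$, feed this into the second equation of~\eqref{e.hyp} to get $v_{xx}\ge(v-1)/(\nu+1)>-1$, and then run a clean convexity argument for $w=v+cx+x^2$ using $w'(0)=0$. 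Both methods land on the same inequality $v(x)\ge v(0)+(c-|x|)|x|$. The paper's approach uses only the first equation and monotonicity of $u$, so it would survive in settings where the elliptic link $-\nu v_{xx}=u-v$ is weakened; your approach exploits both equations but in return yields the quantitative second-derivative bound $v_{xx}\ge -1/(\nu+1)$, avoids integrating the nonlinear equation, and sidesteps the somewhat delicate cancellation of the $u(0^-)$ boundary terms.
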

\begin{proof}
First notice that the second claimed inequality follows from the first since $v \leq 1$.  Hence, we now focus on the first inequality.

Fix any $x < 0$.  Integrate~\eqref{e.hyp} over $[x,0]$ and recall~\eqref{e.c72101} to find
\be
	\int_x^0 u(1-u)\dd x
		= - c u(0) + c u(x)
			- u(0) v_x(0)
			+ u(x) v_x(x)
		= (c+v_x(x))u(x).
\ee
Since $u$ is nonincreasing, we have
\be
	|x| u(x)
		\geq \int_x^0 u(1-u)\dd x
		=  (c+v_x(x)) u(x).
\ee
After dividing by $u(x)$ and rearranging, we find
\be
	c - |x| \leq -v_x(x).
\ee

By the mean value theorem, we can find $\xi_x \in [x,0]$ so that
\be
	\begin{split}
	v(x) &= v(0) + v_x(\xi_x) x
		= v(0) + (- v_x(\xi_x))|x|
		\\
		&\geq v(0) + (c - |\xi_x|) |x|
		\geq v(0) + (c - |x|) |x|
		\geq (c-|x|)|x|.
	\end{split}
\ee
The proof is completed by evaluating the above at $x=-c/2$:
\be
	v\Big(-\frac{c}{2}\Big) \geq \Big(c - \frac{c}{2}\Big) \frac{c}{2} =  \frac{c^2}{4}. \qedhere
\ee
\end{proof}

Next, we show that $c$ cannot degenerate to zero as $\nu\searrow 0$.  Notice that this is important because the generic lower bound in \Cref{p.speed.hyp} degenerates as $\nu \searrow 0$.
\begin{lemma}\label{lem:c nonzero}
	Fix any $\nu_M>0$.  Suppose that $(c,u,v)$ is a traveling wave solution to~\eqref{e.hyp} with $\nu \in (0,\nu_M)$ satisfying~\eqref{e.c72101}-\eqref{e.c72106}.  Then there is $\underline c > 0$, depending only on $\nu_M$, such that
	\be
		c  > \underline c.
	\ee
\end{lemma}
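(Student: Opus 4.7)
The plan is to combine two arguments: a bulk-burning-rate identity that handles small $\nu$, and the refined estimate from \Cref{p.speed.hyp} that handles $\nu$ bounded away from zero. The main obstacle is the small-$\nu$ regime, since the lower bound $c > \sqrt{\nu}/(2\nu+1)$ from \Cref{p.speed.hyp} degenerates there.

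I would first establish a bulk-burning identity. Integrating the first equation of~\eqref{e.hyp} on $(-\infty, 0)$, and using $u(-\infty) = 1$, $v_x(-\infty) = 0$, $v_x(0) = -c$ from~\eqref{e.c72101}, together with $u(0^-) = (\nu + v(0))/(\nu+1)$ from \Cref{prop:cardZleq1} (applicable since $v_x(0) = -c$ places $0 \in \cZ$, so $\cZ = \{0\}$), the boundary contributions telescope to yield
\[
c = \int_{-\infty}^0 u(1-u)\, dx.
\]

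The heart of the argument is to lower-bound the right-hand side in the small-$\nu$ regime. By \Cref{lem:c leq 2}, $c \leq 2$, so \eqref{e.c72106} gives $[v]_{C^{1/2}} \leq \sqrt{2}$, and \eqref{e.c72104} gives $\|u - v\|_{L^\infty} \leq 3C\nu^{1/8}$ for a universal constant $C$. Moreover, by \eqref{e.c81605}, $v(0) = \sqrt{\nu}\, c \leq 2\sqrt{\nu}$, so $u(0^-) = (\nu + v(0))/(\nu+1) \to 0$ as $\nu \to 0$. Hence there exists a threshold $\nu_0 > 0$, depending only on the universal constant $C$, such that for $\nu \leq \nu_0$ we have both $\|u - v\|_{L^\infty} \leq 1/8$ and $u(0^-) \leq 1/8$. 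Since $u$ is continuous and nonincreasing on $(-\infty, 0)$ with $u(-\infty) = 1$, the intermediate value theorem provides $x_1 < x_2 < 0$ with $u(x_1) = 3/4$ and $u(x_2) = 1/4$. The closeness $\|u - v\|_{L^\infty} \leq 1/8$ then forces $v(x_1) \geq 5/8$ and $v(x_2) \leq 3/8$, hence $|v(x_1) - v(x_2)| \geq 1/4$, and the Hölder bound $[v]_{C^{1/2}} \leq \sqrt{c}$ yields $x_2 - x_1 \geq 1/(16c)$. By monotonicity, $u(1-u) \geq 3/16$ on $[x_1, x_2]$, so
\[
c \geq \int_{x_1}^{x_2} u(1-u)\, dx \geq \frac{3}{16}\cdot\frac{1}{16\, c},
\]
giving $c \geq \sqrt{3}/16$ whenever $\nu \leq \nu_0$.

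For the complementary range $\nu \in [\nu_0, \nu_M]$, I would invoke the refined bound~\eqref{e.c81208} of \Cref{p.speed.hyp} (applicable since $\cZ = \{0\}$), which gives $c > \sqrt{\nu}/(2\nu+1) \geq \sqrt{\nu_0}/(2\nu_M+1)$. Setting $\underline{c}$ to be one-half of the minimum of $\sqrt{3}/16$ and $\sqrt{\nu_0}/(2\nu_M+1)$ completes the proof, with $\underline{c}$ depending on $\nu_M$ only (since $\nu_0$ is determined purely by the universal constant in~\eqref{e.c72104}).
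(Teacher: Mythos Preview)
Your proof is correct and follows essentially the same strategy as the paper: use \Cref{p.speed.hyp} for $\nu$ bounded away from zero, and for small $\nu$ combine the H\"older bound on $v$ with the closeness $\|u-v\|_{L^\infty} \lesssim \nu^{1/8}$ to show that the transition interval has length $\gtrsim 1/c$, then integrate $u(1-u)$ over it to obtain $c^2 \gtrsim 1$. The only cosmetic differences are that the paper parametrizes the transition interval by level sets of $v$ (taking $x_\ell = v^{-1}(2/3)$, $x_r = v^{-1}(1/3)$) rather than of $u$, and uses the $\log$-weighted hypothesis $\int u(1-u)|\log u|\,dx \leq c$ from~\eqref{e.c72106} directly, whereas you derive the cleaner identity $c = \int_{-\infty}^0 u(1-u)\,dx$ by integrating~\eqref{e.hyp}.
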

\begin{proof}
	First note that we need only check $\nu \ll 1$ due to \Cref{p.speed.hyp}.  Before beginning, we describe the intuition behind the proof. If $c$ is small,~\eqref{e.c72106} forces $u$ to transition from $1$ to $0$ `quickly' and $v$ to be nearly constant.  This is not consistent with the fact that~\eqref{e.c72104} makes $u$ and $v$ `close.'
	
	To this end, let
	\be
		x_\ell = v^{-1}(2/3)
		\quad\text{ and }\quad
		x_r = v^{-1}(1/3).
	\ee
	Notice that, due to~\eqref{e.c72106}, it follows that
	\be\label{e.c72108}
		\frac{1}{3}
			= |v(x_\ell) - v(x_r)|
			\leq \sqrt{c |x_\ell - x_r|}.
	\ee
	Then, in view of~\eqref{e.c72104} and up to decreasing $\nu_0$, we have
	\be\label{e.c72109}
		\frac{1}{4} \leq u(x) \leq \frac{3}{4}
			\qquad\text{ for all } x \in [x_\ell, x_r].
	\ee
	Incorporating~\eqref{e.c72108} and~\eqref{e.c72109} into~\eqref{e.c72106} yields
	\be
		\frac{1}{9c} \frac{3 \log (4/3)}{16}
			\leq |x_\ell - x_r| \min_{[x_\ell,x_r]} u(1-u) |\log u|
			\leq \int_{x_\ell}^{x_r} u(1-u) |\log u|\dd x
			\leq c.
	\ee
	Rearranging this completes the proof.
\end{proof}

We are now in position to complete the proof of the theorem.
\begin{proof}[Proof of \Cref{t.upper.pm}]
	We establish this by showing that any sequence has a subsequence that converges in the claimed manner.  To this end, fix $(c_n, u_n, v_n)$ with $\nu_n \searrow 0$ as $n\to\infty$.
	
	Using \Cref{lem:c leq 2},~\eqref{e.c72106}, and~\eqref{e.c72104} and after passing to a subsequence, we obtain $\bar c$ and $\bar u$ such that $c_n \to \bar c$, $u_n \to \bar u$ locally uniformly, and $v_n \to \bar v$ locally uniformly in $C^\alpha_{\rm loc}$ for any $\alpha \in (0,1/2)$ and weakly in $H^1_{\rm loc}$.  From~\eqref{e.c72104} we know that $\bar u=\bar v$ so $\bar u$ is continuous (even $C^{\sfrac{1}{2}}$).  We also note that, since $u_n$ is nonincreasing for every $n$, so is $\bar u$.  We use this often in the next paragraphs.
	
	First, we check the qualitative behavior of $\bar u$.  Note that, due to \Cref{lem:c nonzero}, we have that $\bar c > 0$.  Then, from \Cref{lem:c leq 2} and the monotonicity of $u$, we have that $\bar u > 0$ for $x \in (-\infty, 0)$.  In fact, we have
	\be
		\bar u(x) \geq \bar u(-\bar c/2) \geq \bar c^2/4
		\qquad\text{ for all } x \leq -\frac{\bar c}{2}.
	\ee
	In view of~\eqref{e.c72106} and using the same arguments as in the proof of~\eqref{e.c81701}, we conclude that $\bar u(-\infty ) = 1$.  Additionally, from~\eqref{e.c72101}, we have that $\bar u(x) \equiv 0$ on $(0,\infty)$.  In summary,
	\be
		\bar u(-\infty),
			\quad\text{ and } \quad 
			\{u = 0\} = [0,\infty).
	\ee

	Next, we briefly show that $\bar u$ is a distributional solution to \eqref{e.pm}.  Testing the equation satisfied by $u_n$ with a smooth function $\varphi$ supported on a compact interval $I\subset(-\infty, 0)$, we find that:
	\begin{equation}
		c_n\int (u_n)\varphi_x + \int u_n(v_n)_x\varphi_x = \int u_n(1-u)_n\varphi.
	\end{equation}
	The first integral converges to $\int \bar u\varphi_x $ and the last to $\int \bar u(1-\bar u)\varphi$ because $u_n\to u$ locally uniformly. The middle integral converges to $\int\bar u\bar v_x \varphi_x = \int \bar u\bar u_x \varphi_x$ because $u_n\to u$ locally uniformly (hence strongly in $L^2(I)$), and $v_n\to \bar v $ weakly in $H^1(I)$. Thus $\bar u\in H^1_{\mathrm{loc}}$ is a distributional solution of~\eqref{e.pm} on $(-\infty, 0)$. It follows from classical arguments that $\bar u\in C^{2, \alpha}_{\mathrm{loc}}(-\infty, 0)$ and is a classical solution of~\eqref{e.pm} on $(-\infty, 0)$, and that $u$ is strictly decreasing. Thus $u$ is a solution of~\eqref{e.pm} in the sense of Lemma \ref{p.pm classical} (\eqref{item:Aronson soln}), and we conclude that $u$ is a distributional solution on $\mathbb R$.  
	
	Since $\bar u = 0$ on $(0,\infty)$ (recall~\eqref{e.c72101}), it follows that $\bar u$ is the minimal speed traveling wave solution to~\eqref{e.pm}~\cite{Aronson1980}.  It follows that $\bar c = 1/\sqrt 2$.  This completes the proof.
\end{proof}

\section{Proofs of technical lemmas}\label{s.technical}

\ssection{Exponential decay: \Cref{lem:exp decay}}

\begin{proof}[Proof of \Cref{lem:exp decay}]
	First, we point out that the normalization~\eqref{e.c8027} guarantees that $u$ is nonincreasing on $x \geq 0$.  In cases (i) and (iii) this is due to \Cref{lem:Chris monot} and the boundary data $u(+\infty) = 0$ and $u(L) = 0$, respectively.  In case (ii), this follows from \Cref{lem:monotonicity.hyp}. 
	Moreover, we can also compare with points to the left of the origin.  Indeed, the monotonocity results above indicate that $u(x) \geq  u(0)$ for $x \leq 0$.  Hence, 
\be
\label{e.monot conseq}
	\text{if $x\geq 0$ and $z\leq x$ then $u(z)\geq u(x)$.}
\ee

Next, the proof for solutions of~\eqref{e.hyp} is significantly easier than for solutions of~\eqref{e.tw}, so we show only the proof in the setting of the latter equation.  Actually, by formally taking $\chi = -\infty$ in the computations below, one arrives immediately at a proof for solutions of~\eqref{e.hyp}.

We start by defining the following constants. First, let
\be
	A_0 = 4 \log 8
	\quad\text{ so that }\quad
	e^{-3A_0/4}+e^{-A_0/4} \leq 1/4.
\ee
Next, define,
\be
\label{d.A}
	A=
		\max\left\{A_0,
			\frac{1}{\nu},
			\frac{64 c(2\nu+1)}{\nu}
		\right\}
	\quad\text{and}\quad
	\mu
		= \min\left\{\frac{1}{8},
			\frac{1}{16c (2\nu+1)},
			\frac{|\chi_0|A\nu}{2}
		\right\}.
\ee
We shall prove that
\be\label{e.exp_decay}
	u (x_0 + A \sqrt{ \nu}) \leq (1 - \mu) u (x_0).
\ee
as long as $x_0 \geq 0$ and $u$ solves~\eqref{e.tw} on $[x_0, x_0 + A \sqrt \nu)$.  We point out that the dependence on $\nu_m$, $\nu_M$, and $C_M$ is clear due to the explicit nature of $A$ and $\mu$.

	Before beginning with the proof of~\eqref{e.exp_decay}, we show how it yields the claim.  Fix any $x$ and let $n_x=\lfloor \frac{x}{A\sqrt{\nu}}\rfloor$ be such that
\be
	A n_x \sqrt \nu
		\leq x
		< A(n_x+1) \sqrt \nu.
\ee
Then, by the monotonicity of $u$ pointed out at the outset of the proof, it follows that
\be
	u(x) \leq u(A n_x \sqrt \nu).
\ee
On the other hand, applying~\eqref{e.exp_decay} $n_x$ times yields
\be
	u(x) \leq u(A n_x \sqrt \nu)
		\leq (1-\mu)u(A (n_x -1) \sqrt \nu)
		\leq \cdots
		\leq (1-\mu)^{n_x} u(0)
		\leq (1-\mu)^{\frac{x}{A\sqrt\nu}-1} \frac{\nu}{\nu+1}.
\ee
Hence, up to establishing~\eqref{e.exp_decay}, the proof is complete.

We now prove~\eqref{e.exp_decay}.  We argue by contradiction: assume there exists $x_0>0$ such that 
\be\label{e.c8023}
    u(x_0 + A \sqrt{  \nu}) \geq  (1 - \mu) u(x_0).
\ee
Since the proof is quite intricate, let us explain the main idea.  The monotonicity of $u$ and~\eqref{e.c8023} indicates that $u$ is approximately constant on a large interval.  Given its definition, it follows that $v\approx u$ on this interval.  Using then~\eqref{e.tw} and expanding the $(v_x u)_x$ term, we find
\be
	\begin{split}
	- c u_x &= u\Big(1 + \frac{v}{\nu} - \Big(\frac{\nu + 1}{\nu}\Big) u\Big)
			+ v_x u_x
			+ \frac{1}{|\chi|} u_{xx}
		\\&\approx
		u\Big(1 - u\Big)
			+ v_x u_x
			+ \frac{1}{|\chi|} u_{xx}.
	\end{split}
\ee
Let us ignore the second order term, which is only a technical issue.  Next note that we expect $v_x u_x \geq 0$ due to the monotonicity of $u_x$ on $[0,\infty)$.  Finally, since $u$ changes by $\mu u$ over an interval of length, $A\sqrt\nu$, we expect:
\be
	\frac{c \mu u}{A\sqrt\nu}
		\approx - c u_x 
		\geq
		u\Big(1 - u\Big)
		+ o(1).
\ee
Canceling a $u$ term on each side and noting that $u \leq \nu/(\nu+1)<1$, would yield a contradiction since the left hand side tends to zero as $A\to\infty$ or $\mu\to 0$, but the right hand side is positive.  This is roughly how the proof proceeds.  Most of the technical difficulty is in dealing with the second order term.

We now proceed with the proof. 
The first step is to establish that, under the assumption~\eqref{e.c8023}, we have, 
\be
\label{e.v u/8}
v(x)\geq u(x_0)/8 \text{ for all $x \in [x_0, x_0 + 3A \sqrt{ \nu}/4]$}.
\ee
To this end, from~\eqref{e.kernel}, we have 
\be
v (x)
        \geq  \int_{-\infty}^{\frac{-3A\sqrt{\nu}}{4}} \frac{e^{-\frac{|y|}{\sqrt\nu}}}{2\sqrt \nu} u (x+y) dy
        		+ \int_{{-3A\sqrt{\nu}/4}}^0 \frac{e^{-\frac{|y|}{\sqrt\nu}}}{2\sqrt \nu} u (x+y) dy
		+ \int_0^{A \sqrt \nu/4}\frac{e^{-\frac{|y|}{\sqrt\nu}}}{2\sqrt \nu} u (x+y) dy
.
        \ee
Notice that $x - 3 A \sqrt\nu/4 \leq x_0$. 
We bound the first integral with~\eqref{e.monot conseq} and the second and third integrals with~\eqref{e.c8023}.  This yields
\begin{align}
    v (x)
        &\geq \int_{-\infty}^{{-3A\sqrt{\nu}/4}} \frac{e^{\frac{y}{\sqrt\nu}}}{2\sqrt \nu} u (x_0) dy
        {+\int_{-3A\sqrt{\nu}/4}^0 \frac{e^{\frac{y}{\sqrt\nu}}}{2\sqrt \nu} (1-\mu) u (x_0) dy}
        + \int_0^{A \sqrt \nu /4} \frac{e^{-\frac{y}{\sqrt\nu}}}{2\sqrt \nu} (1-\mu) u (x_0) dy
        \\&= \frac{1}{2}\left({e^{-3A/4}+\big(1-e^{-3A/4}\big)(1-\mu)} +\big(1- e^{-A/4}\big) (1-\mu)\right)u (x_0)
       \\&= \left(\frac{1}{2}(1-\mu)(2- e^{-3A/4} -e^{-A/4})+\frac12 e^{-3A/4}\right)u(x_0)\\
       &\geq \left((1-\mu)+ (1-\mu)\left(\frac{- e^{-3A/4} -e^{-A/4}}{2}\right)\right)u(x_0),
        \end{align}
        from which~\eqref{e.v u/8} follows by using  that $A\geq A_0$ and $\mu\leq 1/8$.

Next, we use~\eqref{e.v u/8} and the monotonicity of $u$ to find, for all $x \in [x_0, x_0 + 3A \sqrt{ \nu}/4]$,
\[
1 + \frac{v (x)}{\nu} - \left(\frac{\nu + 1}{\nu}\right) u (x) \geq 1+ \left(-1-\frac{1}{4\nu}\right)u(x_0).
\]
We use the inequality $u(x_0)\leq 2\nu/(2\nu+1)$, which follows from~\eqref{e.c8027} and monotonicity, to bound the right-hand side from below and obtain
\be
\label{e.RHS est exp}
1 + \frac{v (x)}{\nu} - \left(\frac{\nu + 1}{\nu}\right) u (x) \geq
 1- \frac{4\nu-1}{4\nu}\frac{2\nu}{2\nu+1} = \frac12\frac{1}{2\nu+1} \text{ for all $x \in [x_0, x_0 + 3A \sqrt{ \nu}/4]$}.
\ee

Our next ingredient is the observation that, for all $x\geq x_0$, we have,
    \begin{equation}\label{e.q99862}
        \sqrt{\nu}v_x(x) = \int_0^{+\infty} \dfrac{e^{-\frac{y}{\sqrt{\nu}}}}{2\sqrt{\nu}} \big(u (x+y)-u (x-y)\big)\dd y \leq 0.
    \end{equation}
Indeed, the equality follows from the expression~\eqref{e.kernel} for $v$.  The inequality holds pointwise on the integrand, which can be seen by considering two cases.  First, when $x_0 - y \geq 0$, this is due to~\eqref{e.monot conseq}.  In the other case, when $x_0 - y < 0$, \Cref{lem:Chris monot} implies that $u(x_0 - y) \geq \nu/(\nu+1)$.  On the other hand,~\eqref{e.monot conseq} implies that
\be
	u(x_0 + y) \leq u(0) = \frac{\nu}{\nu+1}.
\ee
Hence, $u(x_0 +y) - u(x_0 - y) \leq 0$, as claimed.  This finishes the proof of~\eqref{e.q99862}.

Next, define
\be\label{e.c8025}
	F := \left\{x \in [x_0, x_0 + A\sqrt\nu] : u'(x) > - \frac{2\mu u(x_0)}{A\sqrt\nu}\right\}.
\ee
These are the points where $u$ is flat, hence the $F$ notation. Note that, due to~\eqref{e.c8023},
\be
	\mu u(x_0)
		\leq u(x_0) - u(x_0 + A\sqrt\nu)
		\leq \int_F u_x(x)\dd x
		< |F| \frac{2 \mu u(x_0)}{A\sqrt \nu},
\ee
so that
\be
	|F| \geq \frac{A \sqrt \nu}{2}.
\ee
It is useful to further restrict $F$.  Let
\be\label{e.c8029}
    \tilde F = F \cap [x_0, x_0 + 3A \sqrt{ \nu}/4],
        \quad\text{ which has }
        |\tilde F| \geq \frac{A \sqrt\nu}{4}.
\ee

We shall now establish a lower bound on $u_{xx}$ that holds on $\tilde{F}$. To this, end, we use~\eqref{e.c8025} and the equation~\eqref{e.tw2} satisfied by $u$ and $v$, to find,  for all $x \in \tilde F$,
\begin{align}
    \frac{2 \mu u (x_0)}{A \sqrt \nu} c 
    &\geq - c  u _x(x)
        = u (x) \left(1 + \frac{v (x)}{\nu} - \left(\frac{\nu + 1}{\nu}\right) u (x) \right)
        + v_x(x) u_x(x) + \frac{u_{xx}(x)}{|\chi|}.
        \end{align}
    Then, using~\eqref{e.c8023} and~\eqref{e.RHS est exp} to bound the first term on the right-hand side from below and~\eqref{e.c8025} to estimate the second term yields
        \begin{align}
    \frac{2 \mu u (x_0)}{A \sqrt \nu} c 
        &\geq u(x_0)(1-\mu) \frac12\frac{1}{2\nu_M+1}
        + 0 + \frac{u_{xx}(x)}{|\chi|}.
\end{align}
Rearranging this and recalling the definition of $A$ and $\mu$~\eqref{d.A}, we find
\begin{align}\label{e.q1012356}
\frac{u_{xx}(x)}{u(x_0)|\chi|}\leq 
    \frac{2 \mu}{A \sqrt{\nu}} c
    - (1-\mu) \frac12\frac{1}{2\nu+1}\leq \frac{1}{8(2\nu+1)}-\frac{1}{4}\frac{1}{2\nu+1}=-\frac{1}{8}\frac{1}{2\nu+1}.
\end{align}
On the other hand, we bound $u_{xx}$ off of $\tilde{F}$ by again employing the equation~\eqref{e.tw2}:
\be
\label{e.u'' off F}
\begin{split}
    \frac{u_{xx}(x)}{|\chi|} &=-cu_x(x) - u_x(x)v_x(x) -u(x)\left(1+\frac{v(x)}{\nu}-\left(\frac{\nu + 1}{\nu}\right)u(x)\right) \\
    &\leq -c u_x(x),
    \end{split}
\ee
where the second inequality follows from the estimates \eqref{e.q99862} as well as~\eqref{e.RHS est exp}.

To begin to put these ingredients together, we use Taylor's theorem to find,
\begin{align}
	u\big(x_0+A\sqrt{\nu}\big) & = u(x_0)+u_x(x_0)A\sqrt{\nu}+\int_{0}^{A\sqrt{\nu}}y u_{xx}(x_0+A\sqrt{\nu}-y)\dd y.
    \end{align}
Next, recall from~\eqref{e.c8029} that $\tilde F \subset [x_0 , x_0 + 3A \sqrt \nu /4]$ and $|\tilde F| \geq A \sqrt \nu / 4$. Together with the fact that $u_x(x_0)$ is non-positive we thus find,
\be
\label{e.Taylor}
\begin{split}
    u\big(x_0+A\sqrt{\nu}\big)
    	& \leq u(x_0)+\int_{x_0+A\sqrt{\nu}-\tilde F} y u_{xx}(x_0+A\sqrt{\nu}-y)\dd y \\
	&\quad \quad + \int_{x_0+A\sqrt{\nu}-[x_0, x_0+A\sqrt{\nu}]\backslash\tilde F} y u_{xx}(x_0+A\sqrt{\nu}-y)\dd y.
    \end{split}
\ee
We shall now bound each of the terms on the right-hand side. For the first term, we use the estimate~\eqref{e.q1012356} to find,
\be\label{e.c72908}
	\begin{split}
		\int_{x_0+A\sqrt{\nu}-\tilde F} &yu_{xx}(x_0+A\sqrt{\nu}-y)\dd y
    	\leq -\frac{1}{8(2\nu+1)}|\chi|u(x_0) \int_{x_0+A\sqrt{\nu}-\tilde F}y\dd y \\
    &\leq -\frac{1}{8(2\nu+1)}|\chi|u(x_0) \int_{0}^{A\sqrt{\nu}/{4}}y\dd y
    \leq-\frac{1}{8(2\nu+1)}|\chi|u(x_0)\dfrac{A^2\nu}{32},
	\end{split}
\ee
where, to obtain the second inequality, we used that $|\tilde F|\geq \frac{A\sqrt{\nu}}{4}$ (recall~\eqref{e.c8029}) and that the minimum of
\be
	G\mapsto \int_{G}y\dd y
	\qquad\text{ for any } G \in \mathcal{G}
\ee
is attained at $G = [0,A\sqrt\nu/4]$, where we have defined $\mathcal{G}$ to be the set of Lebesgue measurable sets $G$ in $[0, +\infty]$ with $|G|\geq \frac{A\sqrt{\nu}}{4}$.

For the portion of the integral off of $\tilde F$, we use~\eqref{e.u'' off F} to find
\be\label{e.c72909}
	\begin{split}
		&\int_{x_0+A\sqrt{\nu}-[x_0, x_0+A\sqrt{\nu}]\backslash\tilde F} y u_{xx}(x_0+A\sqrt{\nu}-y)\dd y
		= \int_{[x_0, x_0+A\sqrt{\nu}]\backslash\tilde F}(x_0+A\sqrt{\nu}-y) u_{xx}(y)\dd y \\
    &\qquad\leq A\sqrt{\nu} \int_{x_0}^{x_0+A\sqrt{\nu}} -|\chi|cu_y(y)\dd y
    = -A|\chi|c\sqrt{\nu}\big(u(x_0)-u(x_0+A\sqrt{\nu})\big)\\
    &\qquad\leq A|\chi|c\sqrt{\nu} \mu u(x_0).
	\end{split}
\ee

We now use~\eqref{e.c72908} and~\eqref{e.c72909} to bound the second and third terms on the right-hand side of~\eqref{e.Taylor} from above, and also use~\eqref{e.c8023} to bound the left-hand side of~\eqref{e.Taylor} from below, to obtain
\begin{align}
    (1-\mu)u(x_0)&\leq u(x_0+A\sqrt{\nu}) \leq u(x_0)-\dfrac{|\chi|A^2\nu}{32\cdot 8(2\nu+1)}u(x_0) + A|\chi|c\sqrt{\nu}\mu u(x_0) \\
    &=u(x_0)\left(1-|\chi|A\sqrt\nu\left(\frac{A\sqrt\nu}{32\cdot 8(2\nu+1)}-c \mu\right)\right).
 \end{align}
Our choice of $A$ and the fact that $\mu \leq 1/8$ implies that
    \begin{align}
    (1-\mu)u(x_0)&\leq u(x_0)\left(1-|\chi|A\sqrt\nu /2\right),
\end{align}
which yields the desired contradiction by our choice of $\mu$, completing the proof.
\end{proof}

\ssection{The structure of $\cZ$: \Cref{prop:cardZleq1}}

We begin by establishing some properties of the set of discontinuities of $u$.

\sssection{Some helpful lemmas}

First we obtain a semi-explicit form of $u$ on $\cZ^c$.  This allows us to deduce its behavior at each boundary point of $\cZ$.

\begin{lemma}\label{lem:explicit-solution}
	Under the assumptions of \Cref{prop:cardZleq1}, 
	suppose that there is $x_m \in \cZ^c$.  Take the maximal interval $(x_-,x_+) \subset \cZ^c$ such that $x_m \in (x_-,x_+)$.  
Let $\tau:\mathbb R\to I_x$ be the solution to the differential equation 
    \begin{equation}\label{eq:scaled-time}
	\left\{\begin{aligned}\relax
	    &\tau'(t) = -c-v'\left(\tau(t)\right), \\ 
	    &\tau(0) = x_m,
	\end{aligned}\right.
    \end{equation}
    for any $x_m \in (x_-, x_+)$.  Then, denoting $\bar v(t):=v\left(\tau(t)\right)$, we have,
    \be\label{eq:explicitsol-hyperbolic}
	u\big(\tau(t)\big)
		= \dfrac{u(x_m) \exp\big\{\int_{0}^t \frac{\nu + \bar v(s)}{\nu}\dd s\big\}}{1+u(x_m)\frac{\nu + 1}{\nu}\int_0^t \exp\big\{\int_0^r \frac{\nu + \bar v(s)}{\nu}\dd s\big\}\dd r} 
			\qquad \text{ for all }t\in\mathbb R.
    \ee
    Moreover, assuming that $u(x_m) > 0$ and that if $x_\pm = \pm \infty$, then the limit $v(x_\pm)$ exists, we have:  when $v_x(x_m)<-c$,
    \begin{align}
	\lim_{t\to -\infty} u\big(\tau(t)\big)
		&=\lim_{y\searrow x_-}u(y) \in \left\{ 0, \dfrac{\nu + v(x_-)}{\nu + 1}\right\}
		~\text{ and }~
		& \lim_{t\to+\infty} u\big(\tau(t)\big)
		&=\lim_{y\nearrow x_+}u(y) = \dfrac{\nu+v(x_+)}{\nu + 1},
    \end{align}
    	and, when $v_x(x_m)>-c$,
    \begin{align}
	\lim_{t\to+\infty} u\big(\tau(t)\big)
		&=\lim_{y\searrow x_-}u(y)
		= \dfrac{\nu+v(x_-)}{\nu + 1}, 
		~\text{ and }~
	& \lim_{t\to -\infty} u\big(\tau(t)\big)
		&=\lim_{y\nearrow x_+}u(y) \in \left\{ 0, \dfrac{\nu + v(x_+)}{\nu + 1}\right\}.
    \end{align}
\end{lemma}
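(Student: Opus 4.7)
The plan is to reduce \eqref{e.hyp2} to a scalar ODE along the characteristic flow $\tau$, solve it explicitly via a Bernoulli change of variables, and then extract the boundary behavior from the resulting formula. Set $U(t):=u(\tau(t))$ and $\bar v(t):=v(\tau(t))$. On the maximal interval $(x_-,x_+)\subset \cZ^c$ the factor $c+v_x$ has constant nonzero sign, and the chain rule together with \eqref{e.hyp2} gives the logistic-type ODE
\[
U'(t)=-u_x(\tau(t))\bigl(c+v_x(\tau(t))\bigr)=U(t)\Bigl(\tfrac{\nu+\bar v(t)}{\nu}-\tfrac{\nu+1}{\nu}\,U(t)\Bigr).
\]
Since $U(0)=u(x_m)>0$ and $U\equiv 0$ is an equilibrium, uniqueness keeps $U>0$ for all $t$. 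The substitution $W=1/U$ linearizes the ODE to $W'+\tfrac{\nu+\bar v}{\nu}W=\tfrac{\nu+1}{\nu}$; integrating with the factor $\mu(t):=\exp\!\int_0^t(\nu+\bar v(s))/\nu\,\dd s$ and inverting gives precisely \eqref{eq:explicitsol-hyperbolic}.

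For the limits at $t\to\pm\infty$ to describe behavior at $x_\pm$, I first need $\tau:\R\to(x_-,x_+)$ to be a monotone bijection. Since $v\in W^{2,\infty}$, the function $c+v_x$ is Lipschitz on $\overline{(x_-,x_+)}$ and vanishes at each \emph{finite} endpoint (by maximality of the interval and the definition of $\cZ$). Hence, for $\tau(t)$ near such an endpoint, $|\tau-x_\pm|'\leq L|\tau-x_\pm|$, and Grönwall rules out finite-time arrival. Thus $\tau$ sweeps all of $(x_-,x_+)$ monotonically, with direction determined by the sign of $\tau'(0)=-(c+v_x(x_m))$.

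Consider the case $v_x(x_m)<-c$, so $\tau$ is strictly increasing with $\tau(\pm\infty)=x_\pm$ (the case $v_x(x_m)>-c$ is symmetric by reversing time). Continuity of $v$ on finite intervals, together with the assumption that $v(\pm\infty)$ exists when $x_\pm=\pm\infty$, gives $\bar v(t)\to v(x_\pm)$ as $t\to\pm\infty$; combined with $\nu+\bar v>0$, this yields $\mu(t)\asymp\exp(t(\nu+v(x_\pm))/\nu)$ at the two ends. Hence $\mu(+\infty)=+\infty$, $\mu(-\infty)=0$, and $\int_{-\infty}^0\mu<\infty$. At $t\to+\infty$ both numerator and denominator of \eqref{eq:explicitsol-hyperbolic} diverge; L'Hôpital combined with $\mu'/\mu=(\nu+\bar v)/\nu$ gives $U(+\infty)=(\nu+v(x_+))/(\nu+1)$. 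At $t\to-\infty$ the numerator vanishes while the denominator tends to $D_\infty:=1-u(x_m)\tfrac{\nu+1}{\nu}\int_{-\infty}^{0}\mu$; if $D_\infty\neq 0$ then $U(-\infty)=0$, and if $D_\infty=0$ a second application of L'Hôpital yields $U(-\infty)=(\nu+v(x_-))/(\nu+1)$. This is precisely the claimed dichotomy.

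The algebra is largely routine; the two delicate points are (a) the Grönwall argument preventing $\tau$ from reaching $x_\pm$ in finite time, which crucially uses the $W^{2,\infty}$ regularity of $v$, and (b) the $0/0$ bookkeeping at the ``upstream'' endpoint, which is the source of the two possible limit values.
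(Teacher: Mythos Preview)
Your proof is correct and follows essentially the same route as the paper: derive the logistic ODE for $U(t)=u(\tau(t))$ via the chain rule and \eqref{e.hyp2}, solve it explicitly, then read off the endpoint limits via L'H\^opital and the dichotomy on the denominator. The only differences are cosmetic---you spell out the Bernoulli substitution and the Gr\"onwall argument for non-arrival at $x_\pm$, whereas the paper invokes global Lipschitz continuity of the vector field and uniqueness at the equilibria $x_\pm$ more tersely.
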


The next results establish that, if $u$ jumps to $0$ at some point $x_0\in\mathbb R$ and $v_x$ does not oscillate too much at this point, then $u$ stays equal to $0$ either on $(x_0, +\infty)$ or on $(-\infty, x_0)$.
\begin{lemma}\label{lem:single-jump-0}
	Suppose that the conditions of \Cref{prop:cardZleq1} hold.
    \begin{enumerate}[(i)]
	\item If there exists  $x_0\in \mathcal Z$ and $\bar x_0 > x_0$ such that
		\begin{equation} 
		    u(x_0^+):=\lim_{x\searrow x_0} u(x)=0
		\end{equation}
		and $(x_0, \bar x_0) \subset \cZ^c$, then
		$c>0$ and
	    \be
		u(x) \equiv 0
			\quad\text{ and }\quad
		v(x)  = c\sqrt{\nu} e^{-\frac{x-x_0}{\sqrt{\nu}}}
			\qquad\text{ for all } x > x_0.
	    \ee

	\item 
	There cannot exist  $x_0\in \mathcal Z$ and $\underline x_0 < x_0$ such that $(\underline x_0, x_0) \subset \mathcal Z^c$ and
		\begin{equation} 
			u(x_0^-):=\lim_{x\nearrow x_0}u(x)=0.
		\end{equation}
    \end{enumerate}
\end{lemma}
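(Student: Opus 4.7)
The plan is to work on the maximal subinterval of $\cZ^c$ adjacent to $x_0$: for part~(i) this is $(x_0,x_+)$, and for part~(ii) it is $(x_-,x_0)$. On such an interval the first equation of~\eqref{e.hyp}, after division by $-(c+v_x)\neq 0$, becomes a first-order ODE for $u$ with smooth coefficients, so by uniqueness (or equivalently by direct inspection of the explicit formula in \Cref{lem:explicit-solution}) either $u\equiv 0$ or $u>0$ on the whole interval. My strategy is to rule out the ``$u>0$'' alternative using the convexity of $v$ forced by a one-sided zero limit of $u$, and then to handle the ``$u\equiv 0$'' alternative by integrating the second equation of~\eqref{e.hyp} explicitly.

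\emph{Ruling out $u>0$.} In part~(i), assume $u>0$ on $(x_0,x_+)$. Smoothness of $u$ on $\cZ^c$ combined with $u(x_0^+)=0$ gives $u(x)\to 0$ as $x\searrow x_0$; inserting this into $-\nu v_{xx}=u-v$ and using $v(x_0)>0$ (which follows from $v=\cK_\nu*u$, $\cK_\nu>0$, and $u\geq 0$ nontrivial) yields $v_{xx}>0$ on a right neighbourhood of $x_0$, so that $v_x>v_x(x_0)=-c$ there. Since $v_x+c$ cannot vanish anywhere on $(x_0,x_+)\subset\cZ^c$, continuity of $v_x$ promotes this to $v_x>-c$ throughout the whole interval. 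Then \Cref{lem:explicit-solution} applied in the regime $v_x(x_m)>-c$ identifies the left endpoint boundary limit as $\lim_{y\searrow x_0}u(y)=(\nu+v(x_0))/(\nu+1)>0$, contradicting $u(x_0^+)=0$. Part~(ii) is entirely symmetric: $u(x_0^-)=0$ forces $v_x<-c$ on $(x_-,x_0)$, and the corresponding boundary limit from \Cref{lem:explicit-solution} in that regime is again strictly positive, contradicting $u(x_0^-)=0$.

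\emph{Analysing $u\equiv 0$ and concluding.} In part~(i), with $u\equiv 0$ on $(x_0,x_+)$, the $v$-equation reduces to $-\nu v_{xx}+v=0$; positivity of $v$ makes $v_x$ strictly increasing. If $x_+<\infty$, then maximality forces $x_+\in\cZ$, so $v_x(x_+)=-c=v_x(x_0)$, contradicting strict monotonicity; hence $x_+=+\infty$, and $v\in L^\infty$ together with $v_x(x_0)=-c$ uniquely determines $v(x)=c\sqrt\nu\,e^{-(x-x_0)/\sqrt\nu}$ for $x>x_0$. Finally $c>0$: if $c=0$ this formula forces $v\equiv 0$ on $(x_0,\infty)$, and then positivity of $\cK_\nu$ in $v=\cK_\nu*u$ with $u\geq 0$ gives $u\equiv 0$ on all of $\R$, contradicting the standing nonconstancy assumption. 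For part~(ii) the ``$u\equiv 0$'' case is excluded analogously: if $x_->-\infty$ then $v_x(x_-)=v_x(x_0)=-c$ contradicts strict monotonicity of $v_x$, while if $x_-=-\infty$ then boundedness of $v$ combined with $v_x(x_0)=-c$ forces $v(x)=-c\sqrt\nu\,e^{(x-x_0)/\sqrt\nu}$ on $(-\infty,x_0)$, which is either negative (when $c>0$) or identically zero (when $c=0$, in which case $v\equiv 0$ on $(-\infty,x_0)$ again forces $u\equiv 0$ on $\R$)---neither compatible with $v\geq 0$ and $u$ nonconstant. The single genuinely delicate step is the promotion of the sign of $v_x+c$ from a one-sided neighbourhood of $x_0$ to the whole maximal subinterval of $\cZ^c$; this is precisely what forces me to set up the argument on \emph{maximal} subintervals of $\cZ^c$ from the outset, so that $v_x+c$ cannot change sign in between.
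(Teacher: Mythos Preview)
Your proof is correct and well organized. The broad structure---work on the maximal subinterval of $\cZ^c$ adjacent to $x_0$, force $u\equiv 0$ there, then use $\nu v_{xx}=v$ together with the boundary values $v_x=-c$ at points of $\cZ$ to extend the interval to a half-line and solve for $v$---matches the paper exactly. The argument for $c>0$ via strict positivity of $v=\cK_\nu*u$ is also the same.

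The one genuine difference is in how you force $u\equiv 0$ on the maximal subinterval. The paper does this by a direct ODE decay estimate: it bounds $v_{xx}\leq 1/\nu$, integrates to get $-c-v_x(x)\geq -(x-x_0)/\nu$, and then uses \eqref{e.hyp2} to derive $u_x\leq -\tfrac{3\nu}{4}\,u/(x-x_0)$ on a region where $u$ is small, concluding $u$ decreases from zero. Your route is more structural: you observe the dichotomy $u\equiv 0$ versus $u>0$ from ODE uniqueness, then rule out $u>0$ by first pinning down the sign of $v_x+c$ on the whole interval (via $v_{xx}(x_0^{\pm})=v(x_0)/\nu>0$) and invoking the boundary limits of \Cref{lem:explicit-solution}. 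Your approach is cleaner and makes the sign of $v_x+c$ explicit, which the paper's computation implicitly requires as well (its inequality chain only holds when $-c-v_x<0$). The paper's approach, on the other hand, is more self-contained in that it does not appeal to the limit computations of \Cref{lem:explicit-solution}.
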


Our last result allows us to exclude the possibility of a singular point $x\in \mathcal{Z}$ inside the region $\{u>0\}$.  Since its proof is so short, we include it here.
\begin{lemma}
\label{lem:structure Z}
    Suppose that the conditions of \Cref{prop:cardZleq1} hold.  Let $(x_0, x_1)\subset \mathcal Z^c$ be a maximal connected subset. It cannot be that $x_0$ and $x_1$ are finite and $u(x_0^+), u(x_1^-)>0$.
\end{lemma}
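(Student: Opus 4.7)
My plan is to argue by contradiction. Suppose $(x_0, x_1) \subset \cZ^c$ is a maximal connected subset with $-\infty < x_0 < x_1 < \infty$ and $u(x_0^+), u(x_1^-) > 0$. The first step is to apply \Cref{lem:explicit-solution}: since the positive endpoint limits lie in $\{0, (\nu + v(x_\pm))/(\nu + 1)\}$, they must equal the nontrivial option,
\[
u(x_0^+) = \frac{\nu + v(x_0)}{\nu + 1}, \qquad u(x_1^-) = \frac{\nu + v(x_1)}{\nu + 1}.
\]
Because $v \in W^{2,\infty}$ is $C^1$, the function $w(x) := c + v_x(x)$ is continuous, vanishes at $x_0$ and $x_1$, is nonzero on $(x_0, x_1)$, and hence has a single sign on the interval.

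The second step would be a convexity computation at the endpoints. Using the second equation $v_{xx} = (v - u)/\nu$ (valid a.e., and valid pointwise on $\cZ^c$ by continuity of $u$ there), the displayed endpoint values of $u$ give
\[
\lim_{x \to x_0^+} v_{xx}(x) = \frac{v(x_0) - 1}{\nu + 1}, \qquad \lim_{x \to x_1^-} v_{xx}(x) = \frac{v(x_1) - 1}{\nu + 1}.
\]
Integrating $v_{xx} = w_x$ over a small half-interval abutting each endpoint (and using that $w$ is Lipschitz), these one-sided limits coincide with the one-sided derivatives $w_x(x_0^+)$ and $w_x(x_1^-)$. When $w > 0$ on $(x_0, x_1)$, the conditions $w(x_0) = w(x_1) = 0$ with $w > 0$ interior force $w_x(x_0^+) \geq 0$ and $w_x(x_1^-) \leq 0$, yielding $v(x_0) \geq 1 \geq v(x_1)$; the reverse inequalities hold if $w < 0$ on $(x_0, x_1)$.

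To close, I plan to combine the above with the global bound $v \leq 1$, which follows from $u \leq 1$ together with $v = \cK_\nu * u$ and $\|\cK_\nu\|_{L^1} = 1$. The bound $u \leq 1$ is obtained by applying \eqref{e.hyp2} at an interior local maximum of $u$ in $\cZ^c$, where $u_x = 0$ forces $u(x_m) = (\nu + v(x_m))/(\nu + 1) \leq (\nu + \sup u)/(\nu + 1)$, together with the fact that $\Int(\cZ) = \emptyset$ and the endpoint limits of $u$ on each component of $\cZ^c$ satisfy the same upper bound. Granting $v \leq 1$, the inequality $v(x_0) \geq 1$ or $v(x_1) \geq 1$ becomes equality, so $v$ attains its global maximum at that endpoint; by $C^1$ regularity $v_x$ must vanish there, contradicting $v_x = -c$ when $c > 0$. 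In the degenerate case $c = 0$, the strictly one-signed $w = v_x$ on $(x_0, x_1)$ makes $v$ strictly monotone across the endpoint, so $v$ strictly exceeds $1$ just inside $(x_0, x_1)$, again contradicting $v \leq 1$.

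The main obstacle I anticipate is the careful matching of the one-sided limits of the a.e.-defined $v_{xx}$ with the one-sided derivatives of the Lipschitz function $w$ at $x_0$ and $x_1$; this reduces to the continuity of $u$ on $\cZ^c$ and the fundamental theorem of calculus for Lipschitz functions. A secondary technical point is the proof of $u \leq 1$ in cases where the supremum is only attained asymptotically rather than at an interior local maximum, which is handled by combining the ODE structure on $\cZ^c$ with the explicit endpoint limits from \Cref{lem:explicit-solution}.
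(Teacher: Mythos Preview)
Your argument is correct but takes a longer route than the paper's. Both begin identically: invoke \Cref{lem:explicit-solution} to get $u(x_0^+)=(\nu+v(x_0))/(\nu+1)$ and $u(x_1^-)=(\nu+v(x_1))/(\nu+1)$, then compute the one-sided limits $v_{xx}(x_0^+)=(v(x_0)-1)/(\nu+1)$ and $v_{xx}(x_1^-)=(v(x_1)-1)/(\nu+1)$. The paper then finishes in one stroke: since $\{u<1\}$ has positive measure and $\cK_\nu>0$ everywhere, the convolution gives $v<1$ \emph{strictly}, so both endpoint limits of $v_{xx}$ are strictly negative. Integrating, $v_x(x)<v_x(x_0)=-c$ just to the right of $x_0$ and $v_x(x)>v_x(x_1)=-c$ just to the left of $x_1$; the intermediate value theorem then produces $x_*\in(x_0,x_1)$ with $v_x(x_*)=-c$, contradicting $(x_0,x_1)\subset\cZ^c$.

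Your detour through the single-sign analysis of $w=c+v_x$, the endpoint inequalities $v(x_0)\geq 1$ or $v(x_1)\geq 1$, the global-maximum argument forcing $v_x=0$, and the separate $c=0$ case all stem from working only with the non-strict bound $v\leq 1$. Once you observe that the kernel is strictly positive and $\{u<1\}$ has positive measure (a hypothesis of \Cref{prop:cardZleq1}), the strict inequality $v<1$ is immediate and the whole second half of your argument collapses into the paper's two-line intermediate value theorem step. Your concern about establishing $u\leq 1$ is a fair one and is indeed needed in both approaches; the paper treats it as part of the standing traveling-wave setup.
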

\begin{proof}
    Suppose by contradiction that both $x_0$ and $x_1$ are finite. Then, by \Cref{lem:explicit-solution}, 
    \be
	\lim_{x\to x_0^+} v_{xx}(x)= \lim_{x\to x_0^+}\dfrac{v(x)-u(x)}{\nu} =\frac{1}{\nu}\left(v(x_0)-\dfrac{v(x_0)+\nu}{1+\nu}\right) = \dfrac{\nu(v(x_0)-1)}{1+\nu}<0.
    \ee
    Therefore 
    \be
	v_x(x) = v_x(x_0)+ \int_{x_0}^x v_{xx}(y)\dd y  < v_x(x_0)=-c, 
    \ee
    if $x$ is sufficiently close to $x_0$.

    Similarly, 
    \be
	\lim_{x\to x_1^-} v_{xx}(x)= \lim_{x\to x_1^-}\dfrac{v(x)-u(x)}{\nu} =\frac{1}{\nu}\left(v(x_1)-\dfrac{v(x_1)+\nu}{1+\nu}\right) = \dfrac{\nu(v(x_1)-1)}{1+\nu}<0.
    \ee
    Therefore 
    \be
	v_x(x) = v_x(x_1)+ \int_{x_1}^x v_{xx}(y)\dd y  > v_x(x_1)=-c, 
    \ee
    if $x$ is sufficiently close to $x_1$.

    By the continuity of $v_x$ and the mean value theorem, there must exists $x_*\in (x_0, x_1)$ such that 
    \be
	v_x(x_*)=-c, 
    \ee
    in which case $x_*\in \mathcal Z$; however this is impossible because $I =(x_0, x_1)\subset \mathcal Z^c$ by definition. The contradiction proves the lemma.
\end{proof}

\sssection{The proof of \Cref{prop:cardZleq1}}
We are now in a position to prove \Cref{prop:cardZleq1}.  We write the proof in multiple parts.  First we show that $\cZ$ has the decomposition claimed (that is, either $\mathcal{Z}=\varnothing$ or $\mathcal{Z}=\{x_0\}$ contains exactly one point).  Then we prove the qualitative behavior claimed in cases (1) and (2).  
\begin{proof}[Proof of the decomposition of $\cZ$ in \Cref{prop:cardZleq1}]
	We split the proof in two cases, based on whether $c$ is zero or positive.  We begin with the former, which we show cannot occur.
	
	\medskip

	\noindent\textbf{Case one: $c=0$.}  If $\cZ = \R$, then $v_x \equiv 0$.  From the second equation in~\eqref{e.hyp}, it follows that $u = v$.  From~\eqref{e.hyp2}, we have
	\be
		0 = u \left( (\nu + v) - (\nu + 1) u\right)
			\qquad\text{ in } \R.
	\ee
	These two identities can hold if $u(x) = 0$ or $1$ for every $x\in \R$.  Since $u = v$, it is continuous.  Hence, $u \equiv 0$ or $1$.  This contradicts the hypothesis that $u\not \equiv 0$ and $u\not\equiv 1$ and, thus, concludes the proof when $\cZ = \R$.
	
	If $\cZ \neq \R$, we may take any maximal interval $(x_0, x_1) \subset \cZ^c$, and it must be that either $x_0 > -\infty$ or $x_1 < +\infty$.   Let us consider the former case, but the latter is similar (and, thus, omitted).  The maximality of this interval implies that $x_0 \in \cZ$.
	
	By \Cref{lem:single-jump-0}, if $u(x_0^+)=0$, then $c>0$, which is a contradiction.  Hence, $u(x_0^+)>0$.  If $x_1$ is finite and $u(x_1^-)  = 0$, we have that $u(x_0^+) = 0$ due to \Cref{lem:single-jump-0}, which is a contradiction.  If $x_1$ is finite and $u(x_1^-)>0$ then \Cref{lem:structure Z} is violated.  (Note that, in all cases, the existence of the limit follows from \Cref{lem:explicit-solution}).  It follows that $x_1=+\infty$.
	
	Now, as $c = 0$ and $(x_0,\infty) \subset \cZ^c$, the definition of $\cZ$ (see \Cref{def:hyp}) and the intermediate value theorem imply that $v_x$ has a constant sign on $(x_0,\infty)$.  Since $v$ is bounded, there must be a sequence $y_n \to \infty$ such that $v_x(y_n) \to 0$ as $n\to\infty$.  Note also that, since $x_0 \in \cZ$, $v_x(x_0) = 0$.  Using these facts, along with~\eqref{e.hyp} and that $u$ is bounded and $c=0$, we find
	\be
		\begin{split}
		\int_{x_0}^\infty u(1-u) \dd x
			&= \lim_{n\to\infty} \int_{x_0}^{y_n} u(1-u) \dd x
			= \lim_{n\to\infty} \int_{x_0}^{y_n} (v_x u)_x \dd x
			\\&
			= \lim_{n\to\infty} \left(v_x(y_n) u(y_n) - v_x(x_0) u(x_0^+)\right)
			= 0.
		\end{split}
	\ee
	Since $0 \leq u \leq 1$ and $u\in C_{\rm loc}^1(x_0,\infty)$, it follows that either $u \equiv 0$ or $u\equiv 1$, which is, as above, a contradiction.  It follows that $c\neq 0$.

	\medskip

	\noindent\textbf{Case two: $c>0$.} 
	First, we show that $\cZ^c$ is nonempty and any maximal interval it contains is infinite.
	
	Since $v$ is bounded and $c>0$, it cannot be that $\cZ = \R$.  Hence, $\cZ^c$ is nonempty.  Take any maximal interval $(x_0, x_1) \subset \cZ^c$.  There are three cases to consider: either (i) $x_0 = -\infty$ and $x_1 = +\infty$, (ii) $x_0 > -\infty$, or (iii) $x_1 < +\infty$.  The proof is finished in case (i).  Case (iii) is handled exactly as case (ii); hence, we only show the proof of case (ii).  
	
	Suppose that $x_0 > -\infty$.  If $u(x_0^+) = 0$, it follows from \Cref{lem:single-jump-0} that
	\be
		v_x(x) = - c e^{-\frac{x-x_0}{\sqrt \nu}}
			> -c
			\qquad\text{ for all } x > x_0.
	\ee
	Hence, $(x_0, \infty) \subset \cZ^c$, finishing the claim.  In the other case, where $u(x_0^+)>0$, then \Cref{lem:structure Z} implies that $x_1 = +\infty$.  This concludes the proof that $\cZ^c$ is nonempty and any maximal intervals it contains are infinite.
	
	From the above, we conclude that $\cZ$ is either empty, a single point, or a closed interval.  We need to rule out the last option.  The proof for this was outlined in \Cref{s.main results} (see the discussion around~\eqref{e.c81502}), and, hence, is omitted.  This completes the proof.
	\end{proof}

	\begin{proof}[Proof of the qualitative behavior of $u$ in \Cref{prop:cardZleq1}.(1)]
	The case where $\cZ = \emptyset$ requires only that we establish the regularity of $u$.  This, however, follows directly from \Cref{lem:explicit-solution}.
	\end{proof}
	
	\begin{proof}[Proof of the qualitative behavior of $u$ in \Cref{prop:cardZleq1}.(2)]
	Here, there is $x_0$ such that $\cZ =\{x_0\}$.  By construction of $\cZ$, $v_x + c$ has a constant sign on $(-\infty,x_0)$ and on $(x_0,\infty)$.  Since $v$ is bounded, it follows that $v_x + c > 0$ on both intervals.  Hence,
	\be\label{e.c80805}
		v_x + c >0 \quad \text{ on }\cZ^c
	\ee
	We require~\eqref{e.c80805} in order to determine which case to use in \Cref{lem:explicit-solution} when we apply it below.
	
	By \Cref{lem:explicit-solution}, if $u$ is positive anywhere on a connected component of $\cZ^c$, then it is positive on the entire connected component.  Hence, we have three cases: either $u>0$ on $\cZ^c$, $u\equiv 0$ on $(-\infty,x_0)$, or $u \equiv 0$ on $(x_0,\infty)$.
	
	{\bf Case one: $u>0$ on $\cZ^c$.} Then \Cref{lem:explicit-solution} implies that, up to a redefinition at $x_0$, $u$ is continuous on $\R$ and we have
	\be
		\nu v_{xx}(x_0)
			= v(x_0) - u(x_0)
			= v(x_0) - \frac{\nu + v(x_0)}{\nu + 1}
			< 0.
	\ee
	In the last inequality, we used that $\{u < 1\}$ has positive measure so that, due to~\eqref{e.kernel}, $v(x_0) < 1$. On the other hand, $x_0$ is the location of a minimum of $v_x + c$, implying that
	\be
		v_{xx}(x_0) = 0.
	\ee
	This is a contradiction.  Hence, case one may not occur.

	{\bf Case two: $u \equiv 0$ on $(-\infty,x_0)$.}  We apply \Cref{lem:single-jump-0} and find that $u \equiv 0$ on $(x_0,\infty)$.  Hence, $u = 0$ almost everywhere, which contradicts our assumption that $\{0 < u\}$ has positive measure.  Hence, case two cannot occur.
	
	{\bf Case three: $u \equiv 0$ on $(x_0, \infty)$.}  By assumption, $u$ is positive on a positive measure subset of $\R$.  Hence, $u$ is positive on a positive measure subset of $(-\infty,x_0)$.  We then apply \Cref{lem:explicit-solution} to conclude that
	\be
		u(x_0^-)
			\in \Big\{0, \frac{\nu + v(x_0)}{\nu + 1}\Big\}.
	\ee
	If $u(x_0^-) = 0$, then \Cref{lem:single-jump-0} implies that $u \equiv 0$ on $(-\infty, x_0)$, which contradicts the assumption that $\{0 < u\}$ has positive measure.  It follows that
	\be
		u(x_0^-) = \frac{\nu + v(x_0)}{\nu + 1},
	\ee
	which concludes the proof.
	\end{proof}

The combination of the above establishes \Cref{prop:cardZleq1} in full.

\sssection{Proofs of \Cref{prop:cardZleq1}'s helper lemmas}

\begin{proof}[Proof of \Cref{lem:explicit-solution}]
    We first note that the vector field on the right-hand side of \eqref{eq:scaled-time} is globally Lipschitz continuous, therefore the solution to \eqref{eq:scaled-time} is well-defined and unique on $\mathbb R$. Since $u$ satisfies \eqref{e.hyp2} on $(x_-,x_+)$, the function $\bar u(t):=u\big(\tau(t)\big)$ solves the equation
    \be
	\bar u'(t)=\tau'(x)u'\big(\tau(t)\big)=\bar u(t)\left(\frac{\nu + \bar v(t)}{\nu}-\left(\frac{\nu + 1}{\nu}\right)\bar u(t)\right)
		\qquad\text{ for all } t \in \R.
    \ee
 This implies that $\bar u(t)=u\big(\tau(t)\big)$ must be given by \eqref{eq:explicitsol-hyperbolic}.  
    
    Now, assume $u(x_m)>0$.  Note that, for all $t\in\mathbb R$, we have
    \be
	\left(1+u(x_m)\left(\frac{\nu + 1}{\nu}\right)\int_0^t e^{\int_0^r \frac{\nu + 1}{\nu}\bar v(s)\dd s}\dd r\right)u\big(\tau(t)\big)
		=u(x_m ) \exp\Big\{\int_{0}^t \frac{\nu + 1}{\nu}\bar v(s)\dd s\Big\}, 
    \ee
    therefore $1+u(x_m)\left(\frac{\nu + 1}{\nu}\right)\int_0^t e^{\int_0^l \frac{\nu + 1}{\nu}\bar v(s)\dd s}\dd l$ does not vanish for $t\in\mathbb R$.  It may, however, tend to zero as $t \to -\infty$.

    Consider first the case where $v_x(x_m)<-c$. Then $\tau$ is strictly increasing with $\lim_{t\to+\infty}\tau(t)=x_+$ and $\lim_{t\to-\infty}\tau(t)=x_-$ . 
    We first compute the limit as $t \to +\infty$.  
    Notice that, as $t\to+\infty$,
	\be
		\exp\left\{\int_0^t \frac{\nu + \bar v(s)}{\nu}\dd s\right\}
			\to+\infty
		\quad\text{ and }\quad
		\int_0^t\exp\left\{\int_0^r \frac{\nu + \bar v(s)}{\nu} \dd s\right\} \dd r \to+\infty.
	\ee
	Hence, by l'H\^{o}pital's rule,
	\be\label{e.c80801}
		\begin{split}
		\lim_{t\to\infty} \bar u(t)
			&= \lim_{t\to\infty} \frac{u(x_m)\exp\Big\{\int_0^r \frac{\nu + \bar v(s)}{\nu} \dd s\Big\}}{1 + u(x_m) \frac{\nu + 1}{\nu} \int_0^t\exp\Big\{\int_0^r \frac{\nu + \bar v(s)}{\nu} \dd s\Big\} \dd r}
			= \lim_{t\to\infty}
				\frac{u(x_m)\frac{\nu + \bar v(t)}{\nu} \exp\Big\{\int_0^r \frac{\nu + \bar v(s)}{\nu} \dd s\Big\}}
					{u(x_m) \frac{\nu+1}{\nu} \exp\Big\{\int_0^r \frac{\nu + \bar v(s)}{\nu} \dd s\Big\}}
			\\&
			= \lim_{t\to\infty} \frac{\nu + \bar v(t)}{\nu + 1}
			= \frac{\nu + v(x_+)}{\nu + 1}.
		\end{split}
	\ee

	Next we deal with the limit $t\to -\infty$. We note that the numerator of the right-hand side of~\eqref{eq:explicitsol-hyperbolic} approaches 0 as $t$ approaches $-\infty$. As for the denominator, it is clearly increasing and bounded on $(-\infty, 0)$, and therefore it has a limit. We distinguish two cases, based on whether that limit is positive or zero.  \smallskip
	
	\noindent\textbf{Case one:} suppose that
	\be
		\lim_{t\to-\infty} \left(1+u(x_m)\left(\frac{\nu + 1}{\nu}\right)\int_0^t \exp\left\{\int_0^r\frac{\nu + 1}{\nu}\bar v(s)\dd s\right\} \dd r\right) >0.
	\ee
	Then, recalling that the numerator tends to zero, we find
    \be
	\lim_{y\searrow x^-}u(y) = \lim_{t\to-\infty}\bar u(t) = 0.
    \ee
    \smallskip

	\noindent\textbf{Case two:} suppose that
	\be
    	\lim_{t\to-\infty} \left(1+u(x)\left(\frac{\nu + 1}{\nu}\right)\int_0^t \exp\left\{\int_0^r\frac{\nu + 1}{\nu}\bar v(s)\dd s\right\} \dd r\right)
		=0.
	\ee
	Then, arguing exactly as in~\eqref{e.c80801} yields
	\be
		\begin{split}
		\lim_{t\to-\infty} \bar u(t)
			= \frac{\nu + v(x_-)}{\nu + 1}.
		\end{split}
	\ee
	This concludes the proof in the case where $v_x(x_m) < -c$.\medskip
    
    If $v_x(x_m)>-c$ then $\tau $ is a strictly decreasing function with $\tau(t)\to x_\pm$ as $t\to\mp \infty$.  By applying a similar method, we get the conclusion claimed in the lemma.  \Cref{lem:explicit-solution} is proved. 
\end{proof}

\begin{proof}[Proof of \Cref{lem:single-jump-0}]
	We include only the proof of (i), as the proof of (ii) is similar.  Without loss of generality, we may take $\bar x_0 \in \cZ \cup \{+\infty\}$; that is, $(x_0, \bar x_0)$ is the maximal connected interval in $\cZ^c$ with $x_0$ as a left endpoint.	Since $0 \leq u, v \leq 1$,
	\be
		v_{xx}(x_0)
		= \frac{v(x_0)-u(x_0)}{\nu}
		\leq \frac{1}{\nu}.
	\ee
	It follows that
    \be
	v_x(x) = v_x(x_0)+\int_{x_0}^xv_{xx}(y)\dd y\leq -c + \frac{1}{\nu}(x-x_0),
	\ee
	from which we deduce the inequality
	\be
		-c-v_x(x)\geq -\frac{1}{\nu}(x-x_0).
    \ee
	Let
	\be\label{e.c80802}
		\dbar x_0 = \sup \{ x > x_0: u(x) \leq \min\{v(x), 1/4\}\}.
	\ee
	We first show that $\dbar x_0 \geq \bar{x}_0$, and then we use this to show that $\bar{x}_0 = +\infty$, which will conclude the proof.
	
	Since $u \in C_{\rm loc}^1(x_0, \bar x_0)$, $u(x_0^+) = 0$, and $v>0$ (recall~\eqref{e.kernel}), $\dbar x_0$ is well-defined and $\dbar x_0 > x_0$.  Then, for any $x \in (x_0, \min\{\bar x_0, \dbar x_0\})$,
    \begin{align}
        u_x(x) &= \frac{u(x)}{-c-v_x(x)}\left(\frac{\nu + v(x)}{\nu}-\left(\frac{\nu + 1}{\nu}\right)u(x)\right)
        		\\&
        		\leq \frac{u(x)}{-\frac{1}{\nu}(x-x_0)} \left(\frac{\nu + v(x)}{\nu}-\left(\frac{\nu + 1}{\nu}\right)u(x)\right)
		\leq - \frac{3\nu}{4} \frac{u(x)}{x-x_0}.
    \end{align}
	It immediately follows that $u$ is decreasing on $(x_0, \min\{\bar x_0, \dbar x_0\})$; however, since $u(x_0^+) = 0$, it follows that $u \equiv 0$ on this set.  Following the definition of $\dbar x_0$~\eqref{e.c80802}, it follows that $\dbar x_0 \geq \bar{x}_0$.

    We now show that $\bar x_0 = +\infty$.  We argue by contradiction, assuming that $\bar x_0 < +\infty$.  In this case, $\bar x_0 \in \cZ$ (see the first paragraph of the proof).  Hence, by the previous paragraph and~\eqref{e.hyp}, we find
    \be\label{e.c80803}
		\begin{cases}
			\nu v_{xx} = v \quad&\text{ in } (x_0, \bar x_0),\\
			v_x(x_0) = v_x(\bar x_0) = -c.
		\end{cases}
    \ee
    This is clearly not possible as the first line implies that $v_x$ is strictly increasing over $[x_0, \bar x_0]$ (recall that $v$ is strictly positive due to~\eqref{e.kernel}), while the second line implies that $v_x$ is the same at two points.  It follows that $\bar x_0 = +\infty$; hence, $u \equiv 0$ on $(x_0,\infty)$.    

    We omit the proof of the form of $v$ on $(x_0,\infty)$ as this follows directly from the fact that $\nu v_{xx} = v$ on $(x_0,\infty)$ along with the boundedness and positivity of $v$.
    
    Finally, it is clear from the form of $v$ and its positivity that $c>0$. This concludes the proof.
\end{proof}

\bibliographystyle{plain}
\bibliography{biblio.bib}

\end{document}